\numberwithin{equation}{section}
\theoremstyle{definition}
\newtheorem{defn}{\protect\definitionname}[section]
\theoremstyle{definition}
\newtheorem{example}{\protect\examplename}[section]
\theoremstyle{plain}
\newtheorem{thm}{\protect\theoremname}[section]
\theoremstyle{remark}
\newtheorem{rem}{\protect\remarkname}[section]
\theoremstyle{plain}
\newtheorem{prop}{\protect\propositionname}[section]
\theoremstyle{plain}
\newtheorem{lem}{\protect\lemmaname}[section]
\theoremstyle{plain}
\newtheorem{cor}{\protect\corollaryname}[section]
\theoremstyle{plain}
\newtheorem{question}{\protect\questionname}
\date{}
\providecommand{\corollaryname}{Corollary}
\providecommand{\definitionname}{Definition}
\providecommand{\examplename}{Example}
\providecommand{\lemmaname}{Lemma}
\providecommand{\propositionname}{Proposition}
\providecommand{\questionname}{Question}
\providecommand{\remarkname}{Remark}
\providecommand{\theoremname}{Theorem}
\begin{document}
\title{${\rm SL}_{2}(\mathbb{R})$-developments and Signature Asymptotics
for Planar Paths with Bounded Variation}
\author{H. Boedihardjo\thanks{Department of Statistics, University of Warwick, Coventry, CV4 7AL,
United Kingdom. Email: horatio.boedihardjo@warwick.ac.uk. }$\ $ and X. Geng\thanks{School of Mathematics and Statistics, University of Melbourne, Parkville
VIC 3010, Australia. Email: xi.geng@unimelb.edu.au. XG gratefully
acknowledges the support of ARC grant DE210101352.}}
\maketitle
\begin{abstract}
The signature transform, defined by the formal tensor series of global
iterated path integrals, is a homomorphism between the path space
and the tensor algebra that has been studied in geometry, control
theory, number theory as well as stochastic analysis. An elegant isometry
conjecture states that the length of a bounded variation path $\gamma$
can be recovered from the asymptotics of its normalised signature:
\[
\text{Length}(\gamma)=\lim_{n\rightarrow\infty}\big\Vert n!\int_{0<t_{1}<\cdots<t_{n}<T}d\gamma_{t_{1}}\otimes\cdots\otimes d\gamma_{t_{n}}\big\Vert^{\frac{1}{n}}.
\]
This property depends on a key topological non-degeneracy notion known
as tree-reducedness (namely, with no tree-like pieces). Existing arguments
have relied crucially on $\gamma$ having a continuous derivative
under the unit speed parametrisation. In this article, we prove the
above isometry conjecture for planar paths by assuming only local
bounds on the angle of $\gamma'$ (which ensures the absence of tree-like
pieces). Our technique is based on lifting the path onto the special
linear group ${\rm SL}_{2}(\mathbb{R})$ and analysing the behaviour
of the associated angle dynamics at a microscopic level.
\end{abstract}
\newpage

\tableofcontents{}

\section{Introduction}

The \textit{signature transform} (or simply the \textit{signature})
of a multidimensional path $\gamma:[0,L]\rightarrow\mathbb{R}^{d}$
is the formal tensor series 
\[
S(\gamma)\triangleq\sum_{n=0}^{\infty}\int_{0<t_{1}<\cdots<t_{n}<L}d\gamma_{t_{1}}\otimes\cdots\otimes d\gamma_{t_{n}}
\]
formed by the global iterated path integrals of all orders. Such a
transformation was originally introduced by K.T. Chen \cite{Chen73}
to construct a cohomology theory on loop spaces over manifolds, which
had led to far-reaching applications in geometry and algebraic topology.
It also played an essential role in Dyson's quantum field theory (cf.
\cite{Dyson49}). Due to the vast development of analytic techniques
in the rough path theory, the study of the signature transform has
been enhanced to a new level of maturity over the last decade by many
authors. A landmark result was the uniqueness theorem proved by Hambly-Lyons
in their well-known work \cite{HL10} in 2010. The uniqueness theorem
asserts that the signature of a bounded variation path uniquely determines
the underlying path up to tree-like pieces (heuristically, a tree-like
piece is a portion of the path in which it travels out and reverses
back along itself). This result was later generalised to the rough
path context in \cite{BGLY16}. The uniqueness theorem has stimulated
a stream of exciting problems related to reconstructing paths from
their signatures and studying paths through functions on the signature
space (cf. \cite{CL19,MonotoneInversion,Geng17,LX18}). One reason
to work with the signature transform is that it has a nice intrinsic
algebraic structure (linearisation of non-linear path functions) that
is concealed at the level of paths (cf. \cite{Reu93}).

As a consequence of the uniqueness theorem, one naturally expects
that many quantitative properties of a path can be recovered from
its signature. In the bounded variation case, there is an elegant
and important question along this line. A simple application of the
triangle inequality shows that the signature of a continuous path
$\gamma$ with finite length satisfies the following estimate:
\[
\big\|\int_{0<t_{1}<\cdots<t_{n}<L}d\gamma_{t_{1}}\otimes\cdots\otimes d\gamma_{t_{n}}\big\|\leqslant\int_{0<t_{1}<\cdots<t_{n}<L}|d\gamma_{t_{1}}|\cdot\cdots\cdot|d\gamma_{t_{n}}|=\frac{{\rm Length}(\gamma)^{n}}{n!}
\]
for every $n\geqslant1.$ What is non-trivial and surprising is that,
this estimate becomes asymptotically sharp as $n\rightarrow\infty.$
It was conjectured implicitly in \cite{HL10} and later made explicit
by Chang-Lyons-Ni \cite{CLN18} that, for any continuous, \textit{tree-reduced}
(i.e. not containing tree-like pieces) path with finite length, after
normalisation one expects that

\begin{equation}
{\rm Length}(\gamma)=\lim_{n\rightarrow\infty}\big\| n!\big(\int_{0<t_{1}<\cdots<t_{n}<L}d\gamma_{t_{1}}\otimes\cdots\otimes d\gamma_{t_{n}}\big)\big\|^{1/n}.\label{eq:LengConj}
\end{equation}
This conjectural property is deep and surprising, as it suggests that
the tree-reduced property is eliminating the fine-scale interactions
and cancellations of the path increments in the $n$-th order iterated
integral as $n$ increases. Understanding such a phenomenon is an
important step towards obtaining effective signature lower bounds,
which is in turn essential for establishing convergence of signature
inversion schemes. This point is particularly relevant in the work
of Chang-Lyons \cite{CL19}, which is also a key missing ingredient
to theorise their proposed inversion algorithm in the general bounded
variation context.

The signature asymptotics formula (\ref{eq:LengConj}) was established
in \cite{HL10,LX15} for ${\cal C}^{1}$-paths (i.e. continuously
differentiable) parametrised at unit speed. There is an important
reason for pushing our understanding towards the general bounded variation
case. The conjectural formula (\ref{eq:LengConj}) as well as other
similar signature inversion properties, if proven to be true, should
be a pure consequence of \textit{tree-reducedness} as suggested by
the uniqueness theorem. As a result, identifying a suitable condition
to quantify the ``degree of being tree-reduced'' is an essential
step towards establishing general signature inversion properties.
If a path $\gamma$ is ${\cal C}^{1}$ \textit{at unit speed}, one
can see that $\gamma$ cannot produce a tree-like piece. Indeed, if
$\gamma'$ is a continuous function on $S^{n},$ it immediately rules
out the possibility that $\gamma$ makes an $\pi$-turn at some point.
However, this viewpoint embeds regularity assumptions of the path
into the detection of tree-reducedness, making the latter property
opaque.

For a deeper understanding about the essence of the underlying phenomenon,
it is important to develop an approach that separates the non-degeneracy
property of tree-reducedness from regularity properties of the path
and reveals a property like (\ref{eq:LengConj}) as a consequence
of tree-reducedness. The main contribution of the present article
is to provide an attempt along this philosophy. Our intuition behind
capturing tree-reducedness is very simple: we require that the path
does not make a $\pi$-turn locally, and if it does it makes it in
a way avoiding the creation of a tree-like piece (cf. Definition \ref{def:SRed}
for a more precise formulation). Our main result is stated in Theorem
\ref{thm:Mthm} below, which confirms the conjectural formula (\ref{eq:LengConj})
for planar paths with bounded variation that satisfy such a condition.
At the moment, extending the current analysis to higher dimensions
is a challenging task (cf. Section \ref{sec:FurQ} for a brief discussion).
Nonetheless, the two dimensional situation already appears to be highly
non-trivial and contains several essential ideas. Our methodology,
which is partly inspired by the series \cite{BGS20,CL16,HL10,LS06,LX15}
of works, is based on lifting the underlying path onto the special
linear group ${\rm SL}_{2}(\mathbb{R})$ and developing fine analysis
on the behaviour of the associated angle dynamics. It has a similar
nature as the method developed in \cite{BGS20}, however, the underlying
difficulties are in different directions. The method in \cite{BGS20}
deals with multi-level interactions of different signature components
(which is only relevant if the path is rough), while the current work
deals with fine-scale interactions of different time periods due to
rapid oscillations of the path. It is reasonable to expect that, suitable
combination of the two viewpoints may lead to deeper understanding
towards the more general rough path situation. We mention that ${\rm SL}_{2}(\mathbb{R})$-developments
were also used by Lyons-Sidorova \cite{LS06} to establish a decay
property of the logarithmic signature. In their work, the ${\rm SL}_{2}(\mathbb{R})$-structure
was mainly used for the geometry of its exponential map. In our approach,
the ${\rm SL}_{2}(\mathbb{R})$-structure surprisingly simplifies
the ODE dynamics by producing a decoupled ODE system for the ${\rm SL}_{2}(\mathbb{R})$-action
which may not be the case under other types of developments.

\medskip \noindent \textbf{Organisation}. In Section \ref{sec:StmMthm},
we recall a few notation and state our main theorem. In Section \ref{sec:PathDev},
we recall some basic notions on Cartan developments of paths and derive
the core equations in the context of ${\rm SL}_{2}(\mathbb{R})$-developments.
In Section \ref{sec:GloLem}, we establish several preliminary lemmas
on the angle dynamics that are critical for later analysis. In Section
\ref{sec:PfMthm}, we develop the proof of the main theorem. In Section
\ref{sec:MCusp}, we discuss how our method can be adapted to deal
with a more singular type of paths. In Section \ref{sec:FurQ}, we
discuss a few natural questions to be further investigated.

\section{\label{sec:StmMthm}Statement of the main result}

In this section, we provide the main set-up of the present article
and state our main result.

We start by recalling some standard notation about paths and their
signatures. Let $V$ be a Banach space. For each $n\geqslant1,$ we
denote $V^{\otimes n}$ as the completion of the algebraic tensor
product $V^{\otimes_{{\rm a}}n}$ under the \textit{projective tensor
norm}, which is defined by (cf. \cite{Ryan02})
\begin{equation}
\|\xi\|_{\pi}\triangleq\inf\big\{\sum_{i}|v_{1}^{i}|\cdot\cdots\cdot|v_{n}^{i}|:\xi=\sum_{i}v_{1}^{i}\otimes\cdots\otimes v_{n}^{i}\big\},\ \ \ \xi\in V^{\otimes_{{\rm a}}n}.\label{eq:ProjNorm}
\end{equation}
The projective tensor norm is the largest among all admissible tensor
norms (cf. \cite{Ryan02}).
\begin{defn}
Let $\gamma:[0,L]\rightarrow V$ be a continuous path with finite
length. The \textit{signature} of $\gamma$ is the formal tensor series
of global iterated path integrals against $\gamma$ defined by 
\[
S(\gamma)\triangleq\big(1,\gamma_{L}-\gamma_{0},\cdots,\int_{0<t_{1}<\cdots<t_{n}<L}d\gamma_{t_{1}}\otimes\cdots\otimes d\gamma_{t_{n}},\cdots\big)\in\prod_{n=0}^{\infty}V^{\otimes n}.
\]
\end{defn}
Let $\gamma$ be a given continuous path with finite length. We define
the following normalised signature asymptotics functional:
\begin{equation}
L_{1}(\gamma)\triangleq\lim_{n\rightarrow\infty}\big\| n!\big(\int_{0<t_{1}<\cdots<t_{n}<L}d\gamma_{t_{1}}\otimes\cdots\otimes d\gamma_{t_{n}}\big)\big\|_{\pi}^{1/n}.\label{eq:LimFunc}
\end{equation}
It is known that (cf. \cite{BG19,CLN18}) the limit in (\ref{eq:LimFunc})
is well defined and the quantity $L_{1}(\gamma)$ remains the same
if the limit is replaced by the supremum over $n\geqslant1$. Using
the triangle inequality, it is immediate to see that $L_{1}(\gamma)\leqslant\|\gamma\|_{{\rm 1}\text{-var}}$.
The reserve inequality (for tree-reduced paths) is the main challenging
question.

In the present article, we restrict ourselves to two dimensional paths.
We assume that $\mathbb{R}^{2}$ is equipped with the Euclidean norm
and the tensor products $(\mathbb{R}^{2})^{\otimes n}$ ($n\geqslant1$)
are equipped with the associated projective tensor norm (cf. (\ref{eq:ProjNorm})).
We consider continuous paths in $\mathbb{R}^{2}$ with finite lengths,
parametrised at unit speed. Mathematically, these paths are defined
by 
\begin{equation}
\gamma:[0,L]\rightarrow\mathbb{R}^{2},\ \gamma_{t}=(x_{t},y_{t})=\big(x_{0}+\int_{0}^{t}\cos\beta_{s}ds,y_{0}+\int_{0}^{t}\sin\beta_{s}ds\big),\label{eq:DefPath}
\end{equation}
where $\beta:[0,L]\rightarrow\mathbb{R}$ is a (Lebesgue) measurable
function. For the purpose of this paper, readers may take (\ref{eq:DefPath})
as the definition of $\gamma$. In the appendix, we will outline an
argument that every non-constant continuous path with finite $1-$variation
can be reparametrised into the form (\ref{eq:DefPath}). 

We are going to propose a natural sufficient condition that captures
the tree-reduced property, and to establish the asymptotics formula
(\ref{eq:LengConj}) for paths satisfying such condition. For the
sake of preciseness, we recapture the definition of tree-reducedness
as follows (cf. \cite{BGLY16}). Recall that, a \textit{real tree}
is a metric space in which any two distinct points can be joined by
a unique non-self-intersecting path up to reparametrisation and such
a path is a geodesic in the metric sense.
\begin{defn}
\label{def:TRed} Let $\gamma:[s,t]\rightarrow E$ be a continuous
path in some topological space $E$. We say that $\gamma$ is \textit{tree-like},
if there exists a real tree ${\cal T}$ and two continuous maps 
\[
\xi:[s,t]\rightarrow{\cal T},\ \Phi:{\cal T}\rightarrow E,
\]
such that $\xi_{s}=\xi_{t}$ and $\gamma=\Phi\circ\xi.$ A \textit{tree-like
piece} of a path $\gamma:[s,t]\rightarrow E$ is a portion $[u,v]\subseteq[s,t]$
such that $\gamma|_{[u,v]}$ is tree-like. A path is said to be \textit{tree-reduced
}if it does not contain any tree-like pieces.
\end{defn}
\noindent Heuristically, being tree-reduced means that there is no
portion of the path $\gamma$ along which it reverses back to cancel
itself right away. In the figure below, the first path is tree-reduced
while the second one contains a tree-like piece.\begin{figure}[H]  
\begin{center}  
\includegraphics[scale=0.25]{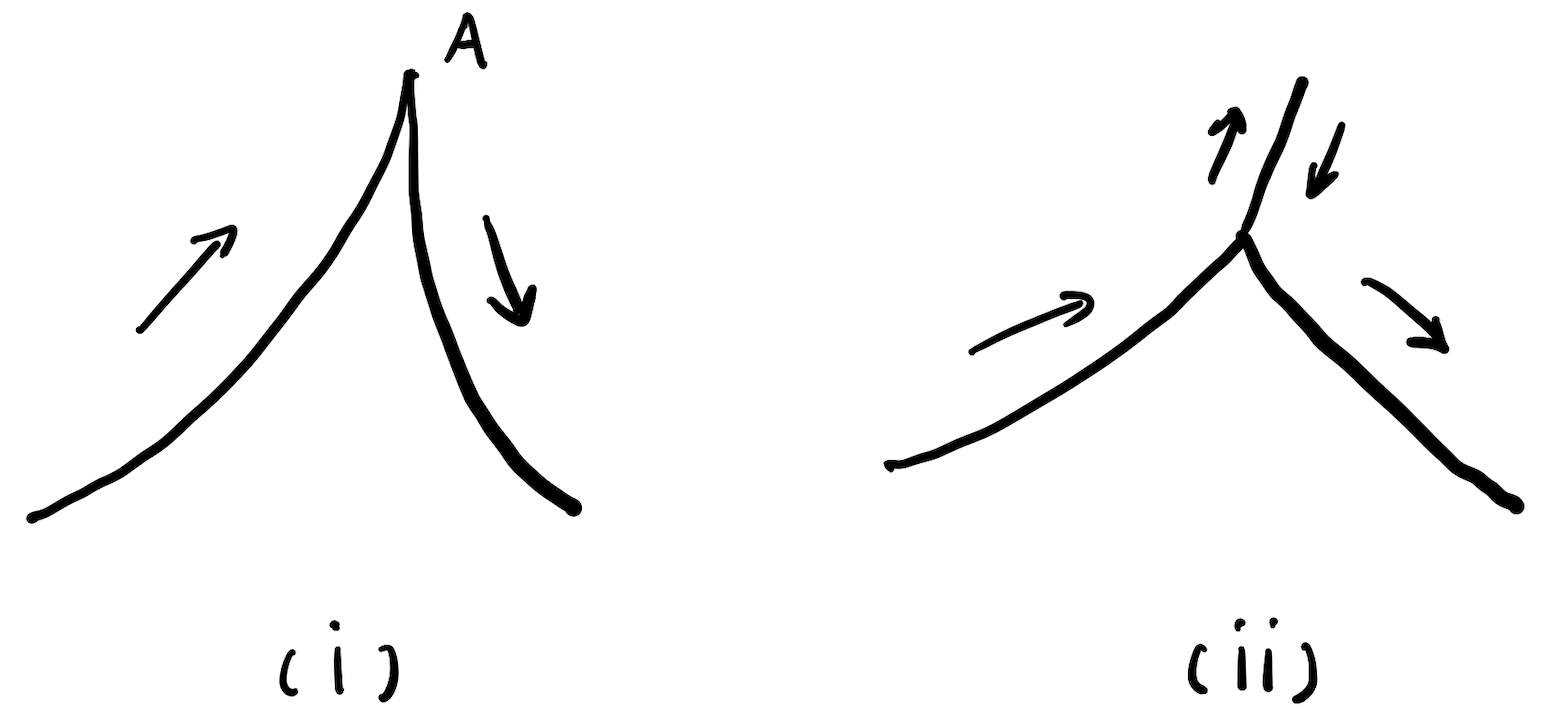} 
\protect\caption{Tree-reduced and non-tree-reduced paths.} \label{fig:RCusp} 
\end{center}
\end{figure}

From now on, we consider a path $\gamma:[0,L]\rightarrow\mathbb{R}^{2}$
given by (\ref{eq:DefPath}) where $\beta:[0,L]\rightarrow\mathbb{R}$
is a given measurable function. It is clear that $\gamma$ is parametrised
at unit speed, and $L$ is the length of $\gamma$.

A natural idea to capture the tree-reduced property in terms of the
angular path $\beta$ is to require that $\beta_{t}$ locally takes
values in an interval of length less than $\pi.$ This rules out the
possibility that $\gamma$ turns around to cancel itself. On the other
hand, making a $\pi$-turn does not necessarily produce a tree-like
piece, as illustrated by the cusp path in Figure \ref{fig:RCusp}
(i). In order to include this possibility, a natural extension is
to make the above requirement on $\beta$ hold outside an arbitrarily
small interval that contains the cusp singularity.

To make the above idea precise, we first introduce the following definition.
\begin{defn}
\label{def:Cusp}Let $\gamma:[0,L]\rightarrow\mathbb{R}^{2}$ be a
path given by (\ref{eq:DefPath}). We say that $\gamma$ is a \textit{regular}
\textit{cusp}, if the following two conditions hold:

\vspace{2mm} \noindent (i) there is a real number $a\in\mathbb{R}$
such that 
\[
\beta_{t}\in[a,a+\pi]\ \ \ {\rm for\ a.a.}\ t\in[0,L];
\]
(ii) for any $\delta>0,$ there is a closed subset $F\subseteq L$
which is a finite disjoint union of closed intervals, as well as two
real numbers $a_{\delta}>a$, $b_{\delta}<a+\pi$, such that
\[
\mu(F^{c})<\delta\ \text{and }\beta_{t}\in[a_{\delta},b_{\delta}]\ \ \ {\rm for\ a.a\ }t\in F.
\]
Here $\mu$ denotes the Lebesgue measure.
\end{defn}
\noindent  Since the definition is only concerned with the angular
path $\beta,$ we sometimes simply say that $\beta$ is a regular
cusp. The typical shape of a regular cusp is illustrated by Figure
\ref{fig:RCusp} (i). We mention that there is another type of cusps
that is more singular in terms of detecting the tree-reduced property
and is thus harder to deal with. We discuss this case in Section \ref{sec:MCusp}
(cf. Theorem \ref{thm:MCusp} below).
\begin{example}
\label{exa:NoCusp}A special situation of Definition \ref{def:Cusp}
is when 
\begin{equation}
\beta_{t}\in[a,b]\ \ \ {\rm for\ a.a.}\ t\in[0,L]\label{eq:GloCond}
\end{equation}
for some $a,b\in\mathbb{R}$ satisfying $b-a<\pi$. In this case,
there is no cusp singularities and the conditions in Definition \ref{def:Cusp}
are satisfied trivially.
\end{example}
Note that Definition \ref{def:Cusp} is global, while producing a
tree-like piece or not is a local issue. To capture the tree-reduced
property, it is more natural to localise Definition \ref{def:Cusp}.
This leads to the following definition, which will be assumed throughout
the rest of the present article.
\begin{defn}
\label{def:SRed}We say that $\gamma$ is \textit{strongly tree-reduced},
if for any $t\in(0,L),$ there exists a neighbourhood $(u_{t},v_{t})$
of $t$ such that $\gamma|_{[u_{t},v_{t}]\cap[0,L]}$ is a regular
cusp.
\end{defn}
Our main result is stated as follows.
\begin{thm}
\label{thm:Mthm}Let $\gamma:[0,L]\rightarrow\mathbb{R}^{2}$ be a
path given by (\ref{eq:DefPath}). Suppose that $\gamma$ is strongly
tree-reduced. Then the signature asymptotics formula (\ref{eq:LengConj})
holds.
\end{thm}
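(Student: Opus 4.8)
The bound $L_1(\gamma)\le L$ is the elementary triangle inequality already noted after \eqref{eq:LimFunc}, so the whole content is the reverse inequality $L_1(\gamma)\ge L$, which I would obtain by a development (duality) argument against $SL_2(\mathbb{R})$. Fix the linear map $M:\mathbb{R}^2\to\mathfrak{sl}_2(\mathbb{R})$ with $M(e_1)=\operatorname{diag}(1,-1)$ and $M(e_2)$ the off-diagonal symmetric matrix with unit entries, so that $M(\cos\beta,\sin\beta)$ is a trace-zero symmetric ``boost'' with eigenvalues $\pm1$ and hence $\|M(v)\|_{\mathrm{op}}=|v|$ for every $v$; note that a path reversal $\beta\mapsto\beta+\pi$ simply flips $M(d\gamma)$ to $-M(d\gamma)$, which is precisely the mechanism of tree-like cancellation at the group level. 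For $\lambda>0$ let $Y^{(\lambda)}$ solve the development equation $dY^{(\lambda)}_t=\lambda\,Y^{(\lambda)}_t\,M(d\gamma_t)$, $Y^{(\lambda)}_0=I$. Iterating gives $Y^{(\lambda)}_L=\sum_{n\ge0}\lambda^n M^{\otimes n}(X_n)$, where $X_n$ is the $n$-th signature tensor in \eqref{eq:LimFunc}; since the projective norm is the largest admissible tensor norm, $\|M^{\otimes n}(X_n)\|_{\mathrm{op}}\le\|X_n\|_\pi$, so if $L_1(\gamma)=\ell$ then the supremum characterisation of $L_1$ forces $\|X_n\|_\pi\le\ell^n/n!$ and hence $\|Y^{(\lambda)}_L\|_{\mathrm{op}}\le e^{\lambda\ell}$ for all $\lambda$. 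Therefore it suffices to prove the matching \emph{growth lower bound}: for every $\varepsilon>0$, $\|Y^{(\lambda)}_L\|_{\mathrm{op}}\ge e^{\lambda(L-\varepsilon)}$ for all large $\lambda$. This yields $\ell\ge L-\varepsilon$, and letting $\varepsilon\downarrow0$ gives Theorem \ref{thm:Mthm}.

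The growth bound is where the $SL_2(\mathbb{R})$-structure and strong tree-reducedness are used. Decomposing $Y^{(\lambda)}_t$ through its action on the hyperbolic plane (a Cartan-type/Iwasawa decomposition) produces the decoupling mentioned in the introduction: an \emph{autonomous} ODE $\dot\theta_t=\lambda\,G(\theta_t,\beta_t)$ for an angular variable $\theta_t$, together with a radial variable $r_t$ obeying $\dot r_t=\lambda\cos(\theta_t-\beta_t)+O(1)$ and $\log\|Y^{(\lambda)}_L\|_{\mathrm{op}}=r_L+O(1)$. The task becomes to show $\int_0^L\cos(\theta_t-\beta_t)\,dt\to L$ as $\lambda\to\infty$, i.e.\ that $\theta_t$ \emph{tracks} $\beta_t$. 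The ODE is built so that $\theta$ is attracted to $\beta$ whenever $|\theta_t-\beta_t|<\pi$, with attraction strength of order $\lambda$, while $\theta-\beta=\pi$ is an unstable equilibrium; the only way tracking can fail on a set of positive measure is a genuine $\pi$-turn sustained over positive time, which is exactly what Definitions \ref{def:Cusp} and \ref{def:SRed} forbid. Concretely, on any subinterval where $\beta$ takes values in a fixed interval $[a_\delta,b_\delta]$ of length $<\pi$ --- and, by strong tree-reducedness together with Definition \ref{def:Cusp}(ii), such subintervals exhaust $[0,L]$ up to a set of Lebesgue measure $<\delta$ around the cusp points --- one shows that after an initial transient of duration $O(1/\lambda)$ the gap $\theta_t-\beta_t$ is trapped in a region where $\cos(\theta_t-\beta_t)\ge1-o(1)$ uniformly in $\lambda$, so $r$ grows there at essentially the maximal rate $\lambda$.

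To globalise, I would cover $[0,L]$ by finitely many neighbourhoods on which $\gamma$ is a regular cusp (compactness plus Definition \ref{def:SRed}), use Chen's identity $Y^{(\lambda)}_L=\prod_i Y^{(\lambda)}_{[s_{i-1},s_i]}$ to concatenate, and propagate the ``near-radial'' angular configuration from one piece to the next using the local tracking estimate --- here the fact that all path directions on a regular cusp lie in a single half-plane keeps the corresponding $SL_2(\mathbb{R})$-elements in a pointed cone, so no cancellation between pieces is possible --- and finally take $\delta$ (hence the total measure of the cusp neighbourhoods) small enough that the length lost there is $<\varepsilon$. The main obstacle is the quantitative angle-tracking estimate in the singular regime $\lambda\to\infty$ for a merely measurable $\beta$: one must rule out resonance between the rapid angular dynamics and the oscillations of $\beta$, control the $O(1/\lambda)$ transient and show it costs negligible length, and --- most delicately --- handle the cusp singularities, where $\beta$ genuinely swings by $\pi$ and the attracting mechanism degenerates at the very boundary of its basin, so that the small-measure control of Definition \ref{def:Cusp}(ii) has to be upgraded to a small \emph{radial} loss, not merely a small loss of time.
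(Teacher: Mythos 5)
Your framework matches the paper's exactly: lift to ${\rm SL}_2(\mathbb{R})$ with $F({\rm e}_i)=E_i$, bound $\|\Gamma^\lambda_L\|$ above by $e^{\lambda L_1(\gamma)}$ via the projective norm, and reduce to a growth lower bound for $\|\Gamma^\lambda_L\|$ proved through a decoupled angular/radial ODE system. The paper does this in Sections 3--4 (Proposition \ref{prop:IntLower}, Lemma \ref{lem:IntLow}, the system \eqref{eq:REqn}--\eqref{eq:AEqn}), and the goal becomes $\varliminf_\lambda\int_0^L\cos(\alpha_t-2\phi^\lambda_t)\,dt\geqslant L$.

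The gap is in the middle paragraph, and it is exactly the content the theorem hinges on. You assert that on any subinterval where $\beta$ stays in an interval $[a_\delta,b_\delta]$ of length $<\pi$, after an $O(1/\lambda)$ transient one gets $\cos(\theta_t-\beta_t)\geqslant 1-o(1)$ uniformly. That is false as stated: the invariance result (the paper's Lemma \ref{lem:RangePhi}) only pins the angular variable into the same ambient interval $[a_\delta,b_\delta]$, which yields the weak bound $\cos(\theta_t-\beta_t)\geqslant\cos(b_\delta-a_\delta)$, far from $1-o(1)$ when $b_\delta-a_\delta$ is of order $1$. When $\beta$ is merely measurable it can oscillate arbitrarily fast throughout $[a_\delta,b_\delta]$, so a naive attraction-plus-$O(1/\lambda)$-transient argument does not produce pointwise tracking; the angular variable equilibrates to an average, not to $\beta_t$. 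The paper's resolution is precisely what your proposal leaves as an acknowledged but unresolved ``obstacle'': apply Lusin's theorem to extract a compact $F_2$ of nearly full measure on which $\alpha$ is uniformly continuous, then choose a partition ${\cal P}_n$ with mesh $\mu(F_1)/n$ and $n\sim\varepsilon\sin\varepsilon\cdot\lambda$, so that on each cell $\alpha$ oscillates by $<\varepsilon$ over $F_2$; then a quantitative catch-up estimate (Lemma \ref{lem:CatTim} / Lemma \ref{lem:EntryTimeEst}) bounds the time for $2\phi^\lambda$ to enter the local range $[\alpha^n_i-\varepsilon,\beta^n_i+\varepsilon]$, and a deviation estimate (Lemma \ref{lem:DeviPhi}) keeps it there up to an error controlled by the Lusin-exceptional measure in that cell. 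Summing the catch-up times over all $n\sim\lambda$ cells only works because the choice of $n$ is tied to $\lambda$ through $c=\varepsilon\sin\varepsilon$; that scaling is not visible in your sketch.

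A second, smaller gap is the globalisation. Your ``pointed cone'' heuristic for concatenation does not address the actual consistency problem: the half-planes $(a_i,a_i+\pi)$ change from one covering piece to the next, and the lemmas only apply once the angular variable is known to sit inside the correct open interval at the start of the piece. The paper resolves this (Section 5.2.2) by using the bad-set estimate of Corollary \ref{cor:BadSetEst} together with the overlap $[u_i,v_i]$ of consecutive pieces to produce, for large $\lambda$, a time $s_{i+1}$ in the overlap where $2\phi^\lambda_{s_{i+1}}$ is close to $\alpha_{s_{i+1}}\in[a'_{i+1},b'_{i+1}]$, hence lands in $(a_{i+1},a_{i+1}+\pi)$. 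Without some version of this argument the local estimates do not patch together, because you cannot verify the hypothesis that starts off each local analysis.
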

Theorem \ref{thm:Mthm} contains the case of ${\cal C}^{1}$-paths
at unit speed as a particular example. Indeed, if the angular path
$\beta_{t}$ is continuous, for each given $t$ the path $\beta_{s}$
takes values in an interval of length less than $\pi$ when $s$ is
near $t$. The result also contains the case of piecewise ${\cal C}^{1}$-paths
whose intersection angles are strictly less than $\pi$. For the situation
where the intersection angle is $\pi$, Theorem \ref{thm:Mthm} still
applies if the cusp singularity has the nature of Definition \ref{def:Cusp}.
Also see Section \ref{sec:MCusp} below for adapting the analysis
to the case of even more singular cusps.

On the other hand, the property stated in Definition \ref{def:SRed}
is a condition that captures the tree-reduced property and has no
implication on the regularity of the path $\gamma$. Our analysis
relies purely on such a non-degeneracy condition. In contrast to the
analysis developed in \cite{HL10,LX15}, regularity assumptions on
$\beta$ are not relevant here and measurability is sufficient for
our purpose.
\begin{rem}
It is also interesting to point out that, although heuristically convincing,
it is not at all obvious to directly show that strong tree-reducedness
implies tree-reducedness in the sense of Definition \ref{def:TRed}.
This is however an immediate consequence of Theorem \ref{thm:Mthm}
and the uniqueness theorem in \cite{BGLY16}.
\end{rem}
To summarise the main idea in our strategy, we consider the development
of $\gamma$ into the special linear group ${\rm SL}_{2}(\mathbb{R})$
which acts on the plane $\mathbb{R}^{2}$ in the canonical way. The
core of our approach is to look at the action of $\Gamma_{L}$ (where
$(\Gamma_{t})_{0\leqslant t\leqslant L}$ is the development of $\gamma$)
from a dynamical viewpoint, and to carefully examine the behaviour
of the associated angle dynamics at a microscopic level. We will elaborate
this point more precisely as we develop the analysis in the following
sections.

\section{\label{sec:PathDev}${\rm SL}_{2}(\mathbb{R})$-developments and
the associated ODE dynamics}

Our starting point of proving Theorem \ref{thm:Mthm} is to develop
the path $\gamma$ onto a suitably chosen Lie group from Cartan's
perspective. In this section, we first recall the general construction
of path developments under the framework of \cite{BGS20}. We then
specialise the development to a particular Lie group and establish
the associated ODE dynamics. The analysis of this ODE dynamics is
the core ingredient in our approach, which will be performed in later
sections.

\subsection{\label{subsec:CartanDev}The Cartan development of rough paths and
the intermediate lower estimate}

Let $V$ be a finite dimensional normed vector space. Let $G$ be
a finite dimensional Lie group with Lie algebra $\mathfrak{g}$. Suppose
that $F:V\rightarrow\mathfrak{g}$ is a given linear map, and $\rho:\mathfrak{g}\rightarrow{\rm End}(W)$
is a given representation (i.e. a Lie algebra homomorphism) of $\mathfrak{g}$
over a finite dimensional normed vector space $W$. Set $\Phi\triangleq\rho\circ F:V\rightarrow{\rm End}(W)$.
Let ${\bf X}=({\bf X}_{t})_{0\leqslant t\leqslant L}$ be a geometric
rough path over $V$ (cf. \cite{LQ02}).
\begin{defn}
The \textit{Cartan development} of ${\bf X}$ \textit{onto }$G$ under
$F$ is the solution to the differential equation
\[
\begin{cases}
dG_{t}=G_{t}\cdot F(d{\bf X}_{t}), & 0\leqslant t\leqslant L,\\
G_{0}=e,
\end{cases}
\]
where $e$ is the identity of $G$. Respectively, the \textit{Cartan
development of ${\bf X}$ onto }${\rm Aut}(W)$ under $\Phi$ is the
solution to the linear differential equation
\[
\begin{cases}
d\Gamma_{t}=\Gamma_{t}\cdot\Phi(d{\bf X}_{t}), & 0\leqslant t\leqslant L,\\
\Gamma_{0}={\rm Id}.
\end{cases}
\]
\end{defn}
\begin{rem}
There are two important reasons for considering path developments.
The first one is that, the end point $\Gamma_{L}$ of the development,
when one varies $\mathfrak{g}$ and the representation $\rho$ in
a suitably chosen class, should encode essentially all information
about the original rough path $\mathbf{X}$ (up to tree-like pieces).
The second one is that, the development is defined by a ``linear''
equation, hence linearising the analysis of nonlinear functionals
on path space. These two points are similar to the philosophy of working
with the signature of $\mathbf{X}$, but it allows much richer algebraic
structures through choosing the Lie algebras and representations properly.
This philosophy has not yet been fully explored in the literature,
and could be of potential interest for further applications in the
study of rough paths and stochastic processes.
\end{rem}
We now recall a general lower estimate proved in \cite{BGS20}. We
only state the version for bounded variation paths. Let $\gamma:[0,L]\rightarrow V$
be a continuous path with finite length. Recall that $L_{1}(\gamma)$
is the functional defined by the normalised signature asymptotics
(\ref{eq:LimFunc}). Under the above set-up, for each $\lambda>0,$
let $(\Gamma_{t}^{\lambda})_{0\leqslant t\leqslant L}$ be the Cartan
development of $\lambda\cdot\gamma$ onto ${\rm Aut}(W)$ under $\Phi$.
\begin{prop}
\label{prop:IntLower}The quantity $L_{1}(\gamma)$ admits the following
lower estimate:
\[
L_{1}(\gamma)\geqslant\underset{\lambda\rightarrow\infty}{\overline{\lim}}\frac{\log\|\Gamma_{L}^{\lambda}\|_{W\rightarrow W}}{\lambda\|\Phi\|_{V\rightarrow{\rm End}(W)}},
\]
where $\|\cdot\|_{X\rightarrow Y}$ denotes the operator norm between
Banach spaces $X$ and $Y$.
\end{prop}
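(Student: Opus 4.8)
The statement to prove is Proposition \ref{prop:IntLower}, the intermediate lower bound
\[
L_{1}(\gamma)\geqslant\varlimsup_{\lambda\to\infty}\frac{\log\|\Gamma_{L}^{\lambda}\|_{W\to W}}{\lambda\|\Phi\|_{V\to{\rm End}(W)}}.
\]
Since the paper says this is a result already proved in \cite{BGS20} and only needs to be recalled in the bounded-variation case, I would present a streamlined proof rather than reconstruct the rough-path machinery. The guiding idea is that the development $\Gamma_{L}^{\lambda}$ is, by Picard iteration, nothing but a sum of iterated integrals of $\lambda\gamma$ with $\Phi$ inserted in each slot, so its operator norm is controlled by the projective norms of the signature terms, which in turn are governed by $L_1(\gamma)$.

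First I would write the Picard/Chen series for the linear ODE $d\Gamma_t^\lambda=\Gamma_t^\lambda\cdot\Phi(d(\lambda\gamma)_t)$, $\Gamma_0^\lambda={\rm Id}$, namely
\[
\Gamma_{L}^{\lambda}={\rm Id}+\sum_{n=1}^{\infty}\lambda^{n}\int_{0<t_1<\cdots<t_n<L}\Phi(d\gamma_{t_1})\cdots\Phi(d\gamma_{t_n}),
\]
the series converging absolutely because $\gamma$ has finite length and $\Phi$ is bounded linear. Next, the key algebraic observation: the $n$-th term is the image of the $n$-th signature tensor
\[
X^{(n)}\triangleq\int_{0<t_1<\cdots<t_n<L}d\gamma_{t_1}\otimes\cdots\otimes d\gamma_{t_n}\in V^{\otimes n}
\]
under the bounded linear map $\Phi^{\otimes n}$ followed by the composition-of-endomorphisms map ${\rm End}(W)^{\otimes n}\to{\rm End}(W)$, which is a contraction (composition of operators has operator norm at most the product of the operator norms, and this is exactly the universal property of the projective tensor norm on the left factor). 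Hence
\[
\Big\|\lambda^{n}\int_{0<t_1<\cdots<t_n<L}\Phi(d\gamma_{t_1})\cdots\Phi(d\gamma_{t_n})\Big\|_{W\to W}\leqslant \lambda^{n}\,\|\Phi\|_{V\to{\rm End}(W)}^{n}\,\|X^{(n)}\|_{\pi},
\]
because the projective norm is precisely the one that makes $\Phi^{\otimes n}$ have norm $\|\Phi\|^n$ and because all admissible tensor norms are dominated by the projective norm. Summing, with $M\triangleq\lambda\|\Phi\|_{V\to{\rm End}(W)}$,
\[
\|\Gamma_{L}^{\lambda}\|_{W\to W}\leqslant 1+\sum_{n=1}^{\infty}M^{n}\|X^{(n)}\|_{\pi}.
\]

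Finally I would extract the growth rate. Recall from the remarks after \eqref{eq:LimFunc} that $L_1(\gamma)=\sup_{n\geqslant1}(n!\,\|X^{(n)}\|_\pi)^{1/n}$, so $\|X^{(n)}\|_{\pi}\leqslant L_1(\gamma)^{n}/n!$ for all $n$. Therefore
\[
\|\Gamma_{L}^{\lambda}\|_{W\to W}\leqslant\sum_{n=0}^{\infty}\frac{(M\,L_1(\gamma))^{n}}{n!}=e^{M L_1(\gamma)},
\]
so $\log\|\Gamma_{L}^{\lambda}\|_{W\to W}\leqslant M L_1(\gamma)=\lambda\|\Phi\|_{V\to{\rm End}(W)}\cdot L_1(\gamma)$. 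Dividing by $\lambda\|\Phi\|_{V\to{\rm End}(W)}$ and letting $\lambda\to\infty$ (taking $\varlimsup$) gives the claim; if $\|\Phi\|=0$ the estimate is vacuous. The main point requiring care — and the only genuinely non-routine step — is the contraction estimate for the composition map on $W^{\otimes n}$ relative to the projective norm; this is where the choice of projective tensor norm in \eqref{eq:ProjNorm}, rather than any other admissible norm, is used, since it is by definition the largest admissible norm and is exactly the one under which linear maps tensorise with multiplicative operator norm. Everything else is the standard Picard expansion plus the already-stated characterisation of $L_1(\gamma)$ as a supremum.
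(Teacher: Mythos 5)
Your argument is correct, and it is essentially the natural proof of the statement. Since the paper simply cites \cite{BGS20} for this Proposition without reproducing the argument, there is no in-text proof to compare against; but the route you take --- Picard expansion of the linear ODE, identification of the $n$-th term as the image of the $n$-th signature tensor under $\Phi^{\otimes n}$ followed by the $n$-fold composition map, the contraction property of composition against the projective norm, and then the characterisation $L_{1}(\gamma)=\sup_{n\geqslant1}\big(n!\,\|X^{(n)}\|_{\pi}\big)^{1/n}$ to sum the series into $e^{\lambda\|\Phi\|\,L_{1}(\gamma)}$ --- is exactly the standard and expected derivation for the bounded-variation version, and all the steps are sound. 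In particular, your observation that the projective norm is the one under which both $\Phi^{\otimes n}$ and the composition map ${\rm End}(W)^{\otimes n}\rightarrow{\rm End}(W)$ behave multiplicatively is the right place where the specific choice of norm in \eqref{eq:ProjNorm} enters, and it is what makes the term-by-term estimate $\lambda^{n}\|\Phi\|^{n}\|X^{(n)}\|_{\pi}$ legitimate. Two small cosmetic points: the estimate you obtain is in fact uniform in $\lambda$, so the conclusion holds with $\sup_{\lambda}$ in place of $\varlimsup_{\lambda}$; and the remark that the case $\|\Phi\|=0$ is ``vacuous'' is better phrased by noting that then $\Gamma_{L}^{\lambda}\equiv{\rm Id}$, so the right-hand side is to be interpreted as $0$, for which the inequality is trivial.
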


\subsection{\label{subsec:SLDev}${\rm SL}_{2}(\mathbb{R})$-developments}

We now specialise our study to the context of Theorem \ref{thm:Mthm}.
In particular, let $V=\mathbb{R}^{2}$ whose canonical basis is denoted
as $\{{\rm e}_{1},{\rm e}_{2}\}$. Let $\gamma:[0,L]\rightarrow V$
be a continuous path with finite length $L$, parametrised at unit
speed. More explicitly, the path $\gamma$ is defined by the integral
(\ref{eq:DefPath}), where the angular path $\beta:[0,L]\rightarrow\mathbb{R}^{2}$
is a given measurable function.

Under the setting of Section \ref{subsec:CartanDev}, we choose $G={\rm SL}_{2}(\mathbb{R}),$
the space of real $2\times2$ matrices with determinant one. The group
${\rm SL}_{2}(\mathbb{R})$ is a three dimensional Lie group, whose
Lie algebra is given by $\mathfrak{g}=\mathfrak{sl}_{2}(\mathbb{R})$,
the space of real $2\times2$ matrices with zero trace. The Lie algebra
$\mathfrak{sl}_{2}(\mathbb{R})$ admits a Lorentzian metric defined
by 
\[
\langle A,B\rangle\triangleq\frac{1}{2}{\rm Tr}(AB),\ \ \ A,B\in\mathfrak{sl}_{2}(\mathbb{R}),
\]
which has signature $(+,+,-)$. An element $A\in\mathfrak{sl}_{2}(\mathbb{R})$
is \textit{hyperbolic}/\textit{elliptic}/\textit{parabolic} if $\langle A,A\rangle$
is positive/negative/zero. An orthonormal basis of $\mathfrak{sl}_{2}(\mathbb{R})$
under the Lorentzian metric $\langle\cdot,\cdot\rangle$ is given
by 
\[
E_{1}=\left(\begin{array}{cc}
1 & 0\\
0 & -1
\end{array}\right),\ E_{2}=\left(\begin{array}{cc}
0 & 1\\
1 & 0
\end{array}\right),\ E_{3}=\left(\begin{array}{cc}
0 & 1\\
-1 & 0
\end{array}\right).
\]
Note that $E_{1},E_{2}$ are hyperbolic elements and $E_{3}$ is elliptic.

We define the linear map $F:V\rightarrow\mathfrak{sl}_{2}(\mathbb{R})$
explicitly by mapping the basis $\{{\rm e}_{1},{\rm e}_{2}\}$ to
the hyperbolic elements $\{E_{1},E_{2}\}$ respectively, i.e. $F({\rm e}_{i})\triangleq E_{i}$
($i=1,2$).

Finally, the representation $\rho:\mathfrak{sl}_{2}(\mathbb{R})\rightarrow{\rm End}(W)$
is taken to be the canonical matrix representation, i.e. the action
of $\mathfrak{g}$ on $W=\mathbb{R}^{2}$ via matrix multiplication.
We assume that $W$ is also equipped with the Euclidean norm. Note
that the group ${\rm SL}_{2}(\mathbb{R})$ also acts on $W$ via matrix
multiplication. If we view $\mathrm{SL}_{2}(\mathbb{R})$ as a subspace
of the matrix algebra ${\rm Mat}_{2}(\mathbb{R})\cong{\rm End}(W),$
Cartan developments onto ${\rm SL}_{2}(\mathbb{R})$ and ${\rm Aut}(W)$
are identical. For each $\lambda>0,$ the Cartan development of $\lambda\cdot\gamma$
onto ${\rm SL}_{2}(\mathbb{R})$ is denoted as $\Gamma^{\lambda}=(\Gamma_{t}^{\lambda})_{0\leqslant t\leqslant L}$.
More explicitly, $\Gamma^{\lambda}$ satisfies the differential equation
\[
\frac{d\Gamma_{t}^{\lambda}}{dt}=\lambda\Gamma_{t}^{\lambda}\cdot\left(\begin{array}{cc}
\cos\beta_{t} & \sin\beta_{t}\\
\sin\beta_{t} & -\cos\beta_{t}
\end{array}\right),\ \ \ \Gamma_{0}^{\lambda}={\rm Id}.
\]
Note that such equation needs to be understood in the integral sense
or in the $t$-a.e. sense.

Under the above specific choice, Proposition \ref{prop:IntLower}
yields the following lower estimate for the quantity $L_{1}(\gamma)$:
\begin{equation}
L_{1}(\gamma)\geqslant\underset{\lambda\rightarrow\infty}{\overline{\lim}}\frac{\log\|\Gamma_{L}^{\lambda}\|_{\mathbb{R}^{2}\rightarrow\mathbb{R}^{2}}}{\lambda}.\label{eq:IntLow}
\end{equation}
To see this, we only need to check that the operator $\Phi=\rho\circ F:\mathbb{R}^{2}\rightarrow{\rm End}(\mathbb{R}^{2})$
has norm one. But this follows from the relation
\[
\big|\Phi(v)(w)\big|=|v|\cdot|w|\ \ \ \forall v,w\in\mathbb{R}^{2},
\]
which can be verified explicitly.
\begin{rem}
From the geometric viewpoint, it is also natural to consider the action
of ${\rm SL}_{2}(\mathbb{R})$ on the upper half plane $\mathbb{H}$
via M\"obius transformation, since ${\rm SL}_{2}(\mathbb{R})$ is
the isometry group of $\mathbb{H}$ when $\mathbb{H}$ is equipped
with the Lobachevsky hyperbolic metric. In this case, the action of
$\Gamma_{t}^{\lambda}$ on $\mathbb{H}$ gives the hyperbolic development
of $\gamma$ (cf. \cite{HL10} for an equivalent hyperbolic framework).
However, we do not take this geometric viewpoint and work with linear
actions instead.
\end{rem}

\subsection{The decoupled ODE system and the associated angle dynamics}

In view of (\ref{eq:IntLow}), in order to obtain a sharp lower bound
for $L_{1}(\gamma)$, we need to estimate $\|\Gamma_{L}^{\lambda}\|_{\mathbb{R}^{2}\rightarrow\mathbb{R}^{2}}$
effectively. For this purpose, we look at the action of $\Gamma_{L}^{\lambda}$
from a dynamical perspective which we now describe.

We introduce the notation $\Gamma_{s,t}^{\lambda}$ ($t\in[s,L]$)
to denote the Cartan development of $\lambda\cdot\gamma|_{[s,L]}$
evaluated at time $t.$ Simple calculation shows that 
\begin{equation}
\Gamma_{s,u}^{\lambda}=\Gamma_{s,t}^{\lambda}\cdot\Gamma_{t,u}^{\lambda}\ \ \ \forall s\leqslant t\leqslant u.\label{eq:SemG}
\end{equation}
As a result, for a given initial vector $\xi^{\lambda}\in\mathbb{R}^{2},$
the action $\Gamma_{L}^{\lambda}\xi^{\lambda}$ can be studied through
the following dynamical perspective:
\begin{equation}
\Gamma_{L}^{\lambda}\xi^{\lambda}=\Gamma_{t_{0},t_{1}}^{\lambda}\cdot\Gamma_{t_{1},t_{2}}^{\lambda}\cdot\cdots\cdot\Gamma_{t_{n-1},t_{n}}^{\lambda}\xi^{\lambda},\label{eq:DisDyn}
\end{equation}
where ${\cal P}=\{t_{i}\}_{0\leqslant i\leqslant n}$ is an arbitrarily
fine partition of $[0,L]$. The dynamics (\ref{eq:DisDyn}) reduces
to the following simple equation when we take ${\rm mesh}({\cal P})\rightarrow0.$
\begin{lem}
\label{lem:ODEw}Let $w_{t}^{\lambda}\triangleq\Gamma_{L-t,L}^{\lambda}\xi^{\lambda}.$
Then $w_{L}^{\lambda}=\Gamma_{L}^{\lambda}\xi^{\lambda},$ and $(w_{t}^{\lambda})_{0\leqslant t\leqslant L}$
is the unique solution to the differential equation:
\begin{equation}
\begin{cases}
\frac{dw_{t}^{\lambda}}{dt}=\lambda\left(\begin{array}{cc}
\cos\beta_{L-t} & \sin\beta_{L-t}\\
\sin\beta_{L-t} & -\cos\beta_{L-t}
\end{array}\right)\cdot w_{t}^{\lambda}, & 0\leqslant t\leqslant L,\\
w_{0}^{\lambda}=\xi^{\lambda}.
\end{cases}\label{eq:ODEw}
\end{equation}
\end{lem}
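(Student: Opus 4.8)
The plan is to verify directly that the time-reversed, transposed version of the defining ODE for $\Gamma^{\lambda}$ produces exactly the stated equation, and then to invoke uniqueness for linear ODEs with integrable (measurable, bounded) coefficients. First I would recall from the definition that $\Gamma^{\lambda}_{s,t}$, as a function of $t\in[s,L]$, solves
\[
\frac{d}{dt}\Gamma^{\lambda}_{s,t}=\lambda\,\Gamma^{\lambda}_{s,t}\cdot M_{t},\qquad \Gamma^{\lambda}_{s,s}={\rm Id},\qquad
M_{t}\triangleq\left(\begin{array}{cc}\cos\beta_{t} & \sin\beta_{t}\\ \sin\beta_{t} & -\cos\beta_{t}\end{array}\right).
\]
Now fix the left endpoint at $s=L-t$ and let $t$ vary; this requires differentiating $\Gamma^{\lambda}_{L-t,L}$ in the \emph{lower} index. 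I would do this using the flow/semigroup relation \eqref{eq:SemG}: writing $\Gamma^{\lambda}_{L-t,L}=\Gamma^{\lambda}_{L-t,L-t+h}\cdot\Gamma^{\lambda}_{L-t+h,L}$ for small $h>0$, and using that $\Gamma^{\lambda}_{L-t,L-t+h}={\rm Id}+\lambda\int_{L-t}^{L-t+h}M_{r}\,dr+o(h)$ (valid at Lebesgue points of $\beta$, hence for a.e.\ $t$), one gets
\[
\frac{d}{dt}\Gamma^{\lambda}_{L-t,L}=\lambda\,M_{L-t}\cdot\Gamma^{\lambda}_{L-t,L}
\]
for a.e.\ $t\in[0,L]$, with the initial condition $\Gamma^{\lambda}_{L,L}={\rm Id}$.

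Multiplying this matrix identity on the right by the fixed vector $\xi^{\lambda}$ and setting $w^{\lambda}_{t}\triangleq\Gamma^{\lambda}_{L-t,L}\xi^{\lambda}$ immediately yields \eqref{eq:ODEw} together with $w^{\lambda}_{0}=\Gamma^{\lambda}_{L,L}\xi^{\lambda}=\xi^{\lambda}$. The claim $w^{\lambda}_{L}=\Gamma^{\lambda}_{L}\xi^{\lambda}$ is then just the observation that $\Gamma^{\lambda}_{0,L}=\Gamma^{\lambda}_{L}$ by definition. Uniqueness of the solution to \eqref{eq:ODEw} follows from the standard theory of linear Carath\'eodory ODEs: the coefficient matrix $\lambda M_{L-t}$ is measurable in $t$ and uniformly bounded (its entries are trigonometric functions of $\beta$, hence bounded by $\lambda$ in absolute value), so existence and uniqueness of an absolutely continuous solution on $[0,L]$ is classical (Gr\"onwall's inequality gives uniqueness).

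The only genuinely delicate point is the differentiation in the lower index $s\mapsto\Gamma^{\lambda}_{s,L}$, since $\beta$ is merely measurable and the naive chain-rule manipulation of \eqref{eq:SemG} must be justified at the level of integral equations rather than pointwise derivatives. I expect this to be the main obstacle, though a mild one: the cleanest route is to avoid differentiation altogether and instead check that $t\mapsto\Gamma^{\lambda}_{L-t,L}$ satisfies the integral equation $\Gamma^{\lambda}_{L-t,L}={\rm Id}+\lambda\int_{0}^{t}M_{L-r}\,\Gamma^{\lambda}_{L-r,L}\,dr$, which can be obtained by a change of variables $r\mapsto L-r$ in the integral form of the defining equation for $\Gamma^{\lambda}_{\cdot,L}$ (viewed in the first variable) combined with \eqref{eq:SemG}; Fubini and the boundedness of $M$ handle all the interchanges. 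Once the integral equation is established, multiplying by $\xi^{\lambda}$ gives the integral form of \eqref{eq:ODEw}, and uniqueness closes the argument.
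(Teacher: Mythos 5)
Your proposal is correct and takes essentially the same approach as the paper: factor $\Gamma^{\lambda}_{L-t,L}$ using the flow relation \eqref{eq:SemG}, write the short-time factor in its integral form, and pass to the limit in the difference quotient. The paper uses the forward increment $w^{\lambda}_{t+h}=\Gamma^{\lambda}_{L-t-h,L-t}\cdot w^{\lambda}_{t}$ where you use the backward one, but this is immaterial. The only genuine differences are that you make two points explicit where the paper is silent: the limit is taken at Lebesgue points of $\beta$ (hence a.e., which suffices), and uniqueness comes from the Carath\'eodory/Gr\"onwall theory for linear ODEs with bounded measurable coefficients. Your alternative route via the integral equation $\Gamma^{\lambda}_{L-t,L}={\rm Id}+\lambda\int_{0}^{t}M_{L-r}\,\Gamma^{\lambda}_{L-r,L}\,dr$ is also correct and arguably cleaner, but it is not the route the paper takes.
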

\begin{proof}
Let $t$ be given and $h>0.$ According to the relation (\ref{eq:SemG}),
we have 
\[
w_{t+h}^{\lambda}=\Gamma_{L-t-h,L-t}^{\lambda}\cdot w_{t}^{\lambda}.
\]
It follows from the equation of the Cartan development that 
\begin{align*}
\frac{w_{t+h}^{\lambda}-w_{t}^{\lambda}}{h} & =\frac{\Gamma_{L-t-h,L-t}^{\lambda}-{\rm Id}}{h}\cdot w_{t}^{\lambda}\\
 & =\frac{\lambda}{h}\int_{L-t-h}^{L-t}\Gamma_{L-t-h,u}^{\lambda}\left(\begin{array}{cc}
\cos\beta_{u} & \sin\beta_{u}\\
\sin\beta_{u} & -\cos\beta_{u}
\end{array}\right)\cdot w_{t}^{\lambda}du,
\end{align*}
The result follows by letting $h\rightarrow0^{+}.$
\end{proof}
\vspace{2mm} \noindent \textbf{Notation}. From now on, we denote
$\alpha_{t}\triangleq\beta_{L-t}$. It is obvious that $\alpha$ satisfies
Definition \ref{def:SRed} if and only if $\beta$ does.

\vspace{2mm}Our next step is to rewrite the equation (\ref{eq:ODEw})
using polar coordinates. Let 
\[
w_{t}^{\lambda}=\rho_{t}^{\lambda}e^{i\phi_{t}^{\lambda}},\ \ \ t\in[0,L].
\]
We also write the initial vector as $\xi^{\lambda}=\rho_{0}^{\lambda}e^{i\phi_{0}^{\lambda}},$
where $\rho_{0}^{\lambda}=1$ and the angle $\phi_{0}^{\lambda}$
is given fixed.
\begin{lem}
The pair $(\rho_{t}^{\lambda},\phi_{t}^{\lambda})$ satisfies the
following ODE system:\begin{numcases}{}
\frac{d\rho_{t}^{\lambda}}{dt}=\lambda\rho_{t}^{\lambda}\cos(\alpha_{t}-2\phi_{t}^{\lambda}),\label{eq:REqn}\\ 
\frac{d\phi_{t}^{\lambda}}{dt}=\lambda\sin(\alpha_{t}-2\phi_{t}^{\lambda}).\label{eq:AEqn} 
\end{numcases}
\end{lem}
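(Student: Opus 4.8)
The plan is to perform the passage to polar coordinates directly in the linear equation \eqref{eq:ODEw} of Lemma \ref{lem:ODEw}, and the whole argument is essentially an exercise in the chain rule once a few regularity points are settled.

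First I would record the structural facts that make the polar description legitimate. Since the coefficient matrix appearing in \eqref{eq:ODEw} is traceless, the associated flow $t\mapsto\Gamma_{L-t,L}^{\lambda}$ takes values in ${\rm SL}_{2}(\mathbb{R})$ and is in particular invertible; hence $w_{t}^{\lambda}=\Gamma_{L-t,L}^{\lambda}\xi^{\lambda}$ never vanishes on $[0,L]$ because $\xi^{\lambda}\neq 0$. Consequently $\rho_{t}^{\lambda}=|w_{t}^{\lambda}|$ is continuous and strictly positive on the compact interval $[0,L]$, hence bounded below by a positive constant, and one may choose a continuous angular lift $\phi_{t}^{\lambda}$ with $w_{t}^{\lambda}=\rho_{t}^{\lambda}e^{i\phi_{t}^{\lambda}}$. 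Moreover $w^{\lambda}$, solving a linear equation with bounded measurable coefficients, is Lipschitz; since it stays in a region bounded away from the origin, both $\rho^{\lambda}$ and $\phi^{\lambda}$ are obtained from $w^{\lambda}$ by composition with maps that are smooth on that region (using a finite cover of $[0,L]$ to switch branches for $\phi^{\lambda}$), so $\rho^{\lambda}$ and $\phi^{\lambda}$ are themselves absolutely continuous in $t$. Thus the differentiations below are valid for a.e. $t$, which is precisely the sense in which \eqref{eq:REqn}--\eqref{eq:AEqn} are to be read.

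Next, writing $w_{t}^{\lambda}=(\rho_{t}^{\lambda}\cos\phi_{t}^{\lambda},\ \rho_{t}^{\lambda}\sin\phi_{t}^{\lambda})$ I would compare the two sides of \eqref{eq:ODEw}. Expanding the matrix--vector product on the right-hand side of \eqref{eq:ODEw} with the addition formulae gives, for the two components (suppressing the superscript $\lambda$ and the time dependence of $\rho$ and $\phi$),
\[
\dot w^{(1)}=\lambda\rho\cos(\alpha_{t}-\phi),\qquad \dot w^{(2)}=\lambda\rho\sin(\alpha_{t}-\phi),
\]
while the polar expression gives $\dot w^{(1)}=\dot\rho\cos\phi-\rho\dot\phi\sin\phi$ and $\dot w^{(2)}=\dot\rho\sin\phi+\rho\dot\phi\cos\phi$. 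Projecting the resulting pair of identities onto the orthonormal frame $(\cos\phi,\sin\phi)$ and $(-\sin\phi,\cos\phi)$ — i.e. taking the appropriate linear combinations — collapses the trigonometric sums and yields $\dot\rho=\lambda\rho\cos(\alpha_{t}-2\phi)$ and $\rho\dot\phi=\lambda\rho\sin(\alpha_{t}-2\phi)$; dividing the second identity by $\rho_{t}^{\lambda}>0$ gives \eqref{eq:REqn}--\eqref{eq:AEqn}.

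I do not expect any genuine obstacle: the core computation is routine trigonometry. The only points requiring a little care are the two regularity issues already isolated above, namely that since $\beta$ (hence $\alpha$) is merely measurable, \eqref{eq:ODEw} and \eqref{eq:REqn}--\eqref{eq:AEqn} hold only in the integral / a.e. sense, and that one must know $w^{\lambda}$ is nowhere zero for the polar change of coordinates — and in particular the division by $\rho_{t}^{\lambda}$ — to be legitimate throughout $[0,L]$; both are supplied by the ${\rm SL}_{2}(\mathbb{R})$-nature of the flow. (The statement only asserts that $(\rho^{\lambda},\phi^{\lambda})$ solves the system; if uniqueness were wanted it would follow from uniqueness for \eqref{eq:ODEw} together with the fact that $(\rho,\phi)\mapsto\rho e^{i\phi}$ is a local diffeomorphism away from $\rho=0$.)
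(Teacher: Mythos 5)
Your proof is correct and follows essentially the same route as the paper: both pass to polar coordinates in \eqref{eq:ODEw} and reduce the identity to the trigonometric addition formulae (projecting onto the moving orthonormal frame is the same as inverting the rotation matrix in the paper's version). The additional care you take over regularity — noting that $\Gamma^{\lambda}_{L-t,L}\in{\rm SL}_2(\mathbb{R})$ keeps $w^{\lambda}$ away from the origin so that the polar lift and the division by $\rho^{\lambda}_{t}$ are legitimate, and that $\rho^{\lambda},\phi^{\lambda}$ are absolutely continuous — is a welcome supplement that the paper leaves implicit.
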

\begin{proof}
Firstly, note that we have
\begin{align}
dw_{t}^{\lambda} & =d\rho_{t}^{\lambda}\cdot\left(\begin{array}{c}
\cos\phi_{t}^{\lambda}\\
\sin\phi_{t}^{\lambda}
\end{array}\right)+\rho_{t}^{\lambda}d\phi_{t}^{\lambda}\cdot\left(\begin{array}{c}
-\sin\phi_{t}^{\lambda}\\
\cos\phi_{t}^{\lambda}
\end{array}\right)\nonumber \\
 & =\left(\begin{array}{cc}
\cos\phi_{t}^{\lambda} & -\sin\phi_{t}^{\lambda}\\
\sin\phi_{t}^{\lambda} & \cos\phi_{t}^{\lambda}
\end{array}\right)\cdot\left(\begin{array}{c}
d\rho_{t}^{\lambda}\\
\rho_{t}^{\lambda}d\phi_{t}^{\lambda}
\end{array}\right).\label{eq:dwPolar}
\end{align}
In addition, according to the equation (\ref{eq:ODEw}), we have 
\begin{equation}
dw_{t}^{\lambda}=\lambda\rho_{t}^{\lambda}dt\cdot\left(\begin{array}{cc}
\cos\alpha_{t} & \sin\alpha_{t}\\
\sin\alpha_{t} & -\cos\alpha_{t}
\end{array}\right)\cdot\left(\begin{array}{c}
\cos\phi_{t}^{\lambda}\\
\sin\phi_{t}^{\lambda}
\end{array}\right).\label{eq:dwDev}
\end{equation}
By comparing (\ref{eq:dwPolar}) and (\ref{eq:dwDev}), we arrive
at 
\[
\left(\begin{array}{c}
d\rho_{t}^{\lambda}\\
\rho_{t}^{\lambda}d\phi_{t}^{\lambda}
\end{array}\right)=\lambda\rho_{t}^{\lambda}dt\cdot\left(\begin{array}{c}
\cos(\alpha_{t}-2\phi_{t}^{\lambda})\\
\sin(\alpha_{t}-2\phi_{t}^{\lambda})
\end{array}\right),
\]
which yields the desired ODE system.
\end{proof}
It is clear that the angular path $\phi_{t}^{\lambda}$ is absolutely
continuous. Observe that the ODE system for $(\rho_{t}^{\lambda},\phi_{t}^{\lambda})$
is decoupled, in the sense that the angular equation \eqref{eq:AEqn}
does not depend on $\rho_{t}^{\lambda}$. In addition, by linearity
the radial component $\rho_{t}^{\lambda}$ can be easily solved from
the radial equation \eqref{eq:REqn} (recall $\rho_{0}^{\lambda}=1$)
as
\begin{equation}
\rho_{t}^{\lambda}=\exp\big(\lambda\int_{0}^{t}\cos(\alpha_{s}-2\phi_{s}^{\lambda})ds\big),\ \ \ t\in[0,L].\label{eq:Rad}
\end{equation}

\begin{rem}
The following viewpoint can be taken to avoid ambiguity when choosing
the angular component $\phi_{t}^{\lambda}$. Given the initial angle
$\phi_{0}^{\lambda}$, the angular equation \eqref{eq:AEqn} admits
a unique solution $\phi_{t}^{\lambda}.$ Define $\rho_{t}^{\lambda}$
by the formula (\ref{eq:Rad}) accordingly. Then $w_{t}^{\lambda}\triangleq\rho_{t}^{\lambda}e^{i\phi_{t}^{\lambda}}$
gives the solution to the equation (\ref{eq:ODEw}) which clearly
coincides with $(\Gamma_{L-t,L}^{\lambda}\xi^{\lambda})_{0\leqslant t\leqslant L}$
by uniqueness.
\end{rem}
Since $\rho_{L}^{\lambda}=|\Gamma_{L}^{\lambda}\xi^{\lambda}|$ and
$\xi^{\lambda}$ is assumed to be a unit vector, we have the following
important lemma which is a direct consequence of the estimate (\ref{eq:IntLow}).
\begin{lem}
\label{lem:IntLow}The quantity $L_{1}(\gamma)$ satisfies 
\begin{equation}
L_{1}(\gamma)\geqslant\underset{\lambda\rightarrow\infty}{\overline{\lim}}\int_{0}^{L}\cos(\alpha_{t}-2\phi_{t}^{\lambda})dt,\label{eq:IntegralLow}
\end{equation}
where the initial angle $\phi_{0}^{\lambda}\in\mathbb{R}$ is arbitrarily
given.
\end{lem}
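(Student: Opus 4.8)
The plan is to derive the bound directly from the intermediate lower estimate (\ref{eq:IntLow}) together with the explicit solution formula (\ref{eq:Rad}) for the radial component. First I would observe that, for any unit vector $\xi^{\lambda}\in\mathbb{R}^{2}$, the operator norm dominates the norm of the image: $\|\Gamma_{L}^{\lambda}\|_{\mathbb{R}^{2}\rightarrow\mathbb{R}^{2}}\geqslant|\Gamma_{L}^{\lambda}\xi^{\lambda}|$. By Lemma \ref{lem:ODEw} one has $\Gamma_{L}^{\lambda}\xi^{\lambda}=w_{L}^{\lambda}$, hence $|\Gamma_{L}^{\lambda}\xi^{\lambda}|=\rho_{L}^{\lambda}$, and (\ref{eq:Rad}) gives $\rho_{L}^{\lambda}=\exp\big(\lambda\int_{0}^{L}\cos(\alpha_{s}-2\phi_{s}^{\lambda})ds\big)$. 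Taking logarithms and dividing by $\lambda>0$ then yields
\[
\frac{\log\|\Gamma_{L}^{\lambda}\|_{\mathbb{R}^{2}\rightarrow\mathbb{R}^{2}}}{\lambda}\geqslant\int_{0}^{L}\cos(\alpha_{s}-2\phi_{s}^{\lambda})ds.
\]

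Next I would pass to $\underset{\lambda\rightarrow\infty}{\overline{\lim}}$ on both sides and invoke (\ref{eq:IntLow}) to bound the left-hand side from below by $L_{1}(\gamma)$. Since $\limsup$ is monotone with respect to pointwise inequalities in $\lambda$, this immediately produces (\ref{eq:IntegralLow}). Finally, because the unit vector $\xi^{\lambda}$ — equivalently, the initial angle $\phi_{0}^{\lambda}$ determining $\xi^{\lambda}=(\cos\phi_{0}^{\lambda},\sin\phi_{0}^{\lambda})$ — was chosen arbitrarily, and $\phi_{t}^{\lambda}$ is by definition the unique solution of the angular equation (\ref{eq:AEqn}) issued from that initial angle, the estimate is valid for every choice of $\phi_{0}^{\lambda}\in\mathbb{R}$, and indeed $\phi_{0}^{\lambda}$ may be allowed to depend on $\lambda$ since only a lower bound is asserted.

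There is no genuine obstacle in this lemma; it is essentially a bookkeeping consequence of the material already established. The one point worth stressing is that the right-hand side of (\ref{eq:IntegralLow}) depends on the initial angle $\phi_{0}^{\lambda}$ through the (nonlinear) angle dynamics (\ref{eq:AEqn}), so the freedom to choose $\phi_{0}^{\lambda}$ — possibly $\lambda$-dependent — should be carried along: it is precisely this leverage that the finer analysis of the angle dynamics in the subsequent sections will use to drive the integral $\int_{0}^{L}\cos(\alpha_{t}-2\phi_{t}^{\lambda})dt$ up to the full length $L$, thereby matching the trivial upper bound $L_{1}(\gamma)\leqslant L$.
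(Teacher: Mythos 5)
Your proposal is correct and is precisely the argument the paper intends: the paper presents Lemma \ref{lem:IntLow} as a ``direct consequence'' of (\ref{eq:IntLow}) together with the identity $\rho_{L}^{\lambda}=|\Gamma_{L}^{\lambda}\xi^{\lambda}|$ and the formula (\ref{eq:Rad}), which is exactly the chain $\rho_{L}^{\lambda}\leqslant\|\Gamma_{L}^{\lambda}\|_{\mathbb{R}^{2}\rightarrow\mathbb{R}^{2}}$, taking logarithms, dividing by $\lambda$, and passing to $\limsup$ that you spelled out. Your closing observation that the initial angle $\phi_{0}^{\lambda}$ may be chosen $\lambda$-dependently (since only a lower bound is asserted) is the right point to flag, as the later sections indeed exploit this freedom.
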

\noindent Lemma \ref{lem:IntLow} provides a clearer picture of proving
Theorem \ref{thm:Mthm}. In order to produce the lower bound of $L_{1}(\gamma)$
given by the length $L$, we are led to showing that \textit{the angular
path $2\phi_{t}^{\lambda}$ is close to $\alpha_{t}$ for most of
the time when $\lambda$ is large}. At a heuristic level, this phenomenon
is reasonable, since the angular equation \eqref{eq:AEqn} is suggesting
a strong mean-reversing behaviour of $2\phi_{t}^{\lambda}$ towards
the path $\alpha_{t}$ when $\lambda$ is large. However, making this
phenomenon mathematically precise is a non-trivial challenging task,
since no regularity assumptions are made on the path $\alpha_{t}$
and one has to rely on fine measure-theoretic arguments. The analysis
simplifies substantially if $\alpha_{t}$ is assumed to be a continuous
function (cf. Appendix for a discussion on this case).

\section{\label{sec:GloLem}Some important lemmas on the global behaviour
of the angle dynamics}

The core of our approach is to analyse the angle dynamics for $\phi_{t}^{\lambda}$.
In this section, we derive several key lemmas on the behaviour of
$\phi_{t}^{\lambda}$ relative to the path $\alpha_{t}$ under global
assumptions on $\alpha_{t}$. These results will be used in a localised
situation when we prove the main theorem in the next section.

The first lemma tells us that $2\phi_{t}^{\lambda}$ remains in the
same range as $\alpha_{t}$'s provided that the initial angle $2\phi_{0}^{\lambda}$
does.
\begin{lem}
\label{lem:RangePhi}Let $a\in\mathbb{R}.$ Suppose that $\alpha_{t}\in[a,a+\pi]$
for a.a. $t\in[0,L].$ If $2\phi_{0}^{\lambda}\in(a,a+\pi),$ then
$2\phi_{t}^{\lambda}\in(a,a+\pi)$ for every $t\in[0,L]$ and $\lambda>0.$
\end{lem}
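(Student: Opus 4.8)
The plan is to pass to the rescaled angle $\psi_{t}\triangleq2\phi_{t}^{\lambda}-a$, which by \eqref{eq:AEqn} is absolutely continuous and satisfies
$\tfrac{d}{dt}\psi_{t}=2\lambda\sin(\tilde{\alpha}_{t}-\psi_{t})$ for a.a.\ $t$, where $\tilde{\alpha}_{t}\triangleq\alpha_{t}-a\in[0,\pi]$ for a.a.\ $t$ by hypothesis. The assertion is equivalent to $\psi_{t}\in(0,\pi)$ for all $t\in[0,L]$ given $\psi_{0}\in(0,\pi)$, and I would prove this by a first-exit-time barrier argument combined with Gr\"onwall.

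Suppose, for contradiction, that the set $\{t\in[0,L]:\psi_{t}\notin(0,\pi)\}$ is nonempty. Since $\psi$ is continuous this set is closed, so it has a minimum $\tau$, and $\tau>0$ because $\psi_{0}\in(0,\pi)$; moreover $\psi_{t}\in(0,\pi)$ for $t\in[0,\tau)$ and, by continuity, $\psi_{\tau}\in\{0,\pi\}$. The two cases are symmetric (the case $\psi_{\tau}=\pi$ reduces to $\psi_{\tau}=0$ under $\psi\mapsto\pi-\psi$, $\tilde{\alpha}\mapsto\pi-\tilde{\alpha}$), so consider $\psi_{\tau}=0$. By continuity choose $\varepsilon\in(0,\tau)$ so that $\psi_{t}\in(0,\pi/2)$ for all $t\in[\tau-\varepsilon,\tau]$. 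The elementary point is that for such $t$ one has $\tilde{\alpha}_{t}-\psi_{t}\in[-\psi_{t},\pi-\psi_{t}]$, an interval on which $\sin$ attains its minimum $-\sin\psi_{t}$ at the left endpoint; hence $\sin(\tilde{\alpha}_{t}-\psi_{t})\geqslant-\sin\psi_{t}\geqslant-\psi_{t}$ a.e.\ on $[\tau-\varepsilon,\tau]$. Therefore $\tfrac{d}{dt}\psi_{t}\geqslant-2\lambda\psi_{t}$ a.e.\ there, so $t\mapsto e^{2\lambda t}\psi_{t}$ is nondecreasing on $[\tau-\varepsilon,\tau]$, giving $\psi_{\tau}\geqslant e^{-2\lambda\varepsilon}\psi_{\tau-\varepsilon}>0$, which contradicts $\psi_{\tau}=0$.

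For completeness, in the case $\psi_{\tau}=\pi$ the same scheme runs with the upper barrier: pick $\varepsilon$ so that $\psi_{t}\in(\pi/2,\pi)$ on $[\tau-\varepsilon,\tau]$, observe that on $[-\psi_{t},\pi-\psi_{t}]$ the function $\sin$ now has maximum $\sin\psi_{t}=\sin(\pi-\psi_{t})$, so $\tfrac{d}{dt}(\pi-\psi_{t})\geqslant-2\lambda(\pi-\psi_{t})$ a.e., and conclude $\pi-\psi_{\tau}\geqslant e^{-2\lambda\varepsilon}(\pi-\psi_{\tau-\varepsilon})>0$. Either way no exit occurs, so $2\phi_{t}^{\lambda}=\psi_{t}+a\in(a,a+\pi)$ for all $t\in[0,L]$.

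I do not anticipate a serious obstacle. The only matters needing care are the trigonometric extremum computations---which is precisely why I localise near $\tau$, so that $\psi_{t}$ is safely confined to $(0,\pi/2)$ or to $(\pi/2,\pi)$, rather than attempting a single inequality valid on all of $(0,\pi)$ (the bound $\sin(\tilde\alpha-\psi)\geqslant-\sin\psi$ does fail once $\psi$ passes $\pi/2$)---and the fact that $\alpha$, hence the right-hand side of \eqref{eq:AEqn}, is only measurable. The latter is harmless: $\psi$ is absolutely continuous, the differential inequality holds for a.a.\ $t$, and one works throughout with its integrated (Gr\"onwall) form.
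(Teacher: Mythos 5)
Your proof is correct and proceeds by essentially the same method as the paper: a first-exit-time barrier argument culminating in a Gr\"onwall inequality that shows the barrier cannot actually be reached. The only cosmetic differences are bookkeeping ones: the paper pins down an explicit interval $[t_1,t_2]$ via a last-crossing of the midpoint $\tfrac{a+b}{2}$ and then bounds $\sin(\alpha_t-2\phi_t^\lambda)$ by a case split on whether $\alpha_t$ lies above or below the midpoint, whereas you localise near $\tau$ by continuity and observe directly that on $[-\psi_t,\pi-\psi_t]$ the minimum of $\sin$ is $-\sin\psi_t$, which packages the same trigonometric fact into a single inequality. (A trivial slip: you assert $\psi_t\in(0,\pi/2)$ on the \emph{closed} interval $[\tau-\varepsilon,\tau]$, but $\psi_\tau=0$; replace by $\psi_t\in[0,\pi/2)$ — your bound $\sin(\tilde\alpha_t-\psi_t)\geqslant-\sin\psi_t$ still holds at $\psi_t=0$, so nothing changes.)
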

\begin{proof}
Recall that $2\phi_{t}^{\lambda}$ is absolutely continuous. Set $b\triangleq a+\pi.$
Suppose on the contrary that $2\phi_{t}^{\lambda}$ leaves $(a,a+\pi)$
at some time, and let us assume that $2\phi_{t}^{\lambda}$ hits the
end point $b$ before hitting $a$. Define 
\[
t_{2}\triangleq\inf\big\{ t:2\phi_{t}^{\lambda}=b\big\},\ t_{1}\triangleq\sup\big\{ t<t_{2}:2\phi_{t}^{\lambda}<\frac{a+b}{2}\big\}.
\]
We take $t_{1}=0$ if $2\phi_{t}^{\lambda}\geqslant\frac{a+b}{2}$
for all $t<t_{2}$. Apparently, we have $2\phi_{t_{1}}^{\lambda}<b,$
$2\phi_{t_{2}}^{\lambda}=b$ and $2\phi_{t}^{\lambda}\in[\frac{a+b}{2},b]$
for all $t\in[t_{1},t_{2}]$. By using the equation \eqref{eq:AEqn}
of $\phi_{t}^{\lambda},$ for a.a. $t\in[t_{1},t_{2}]$ we have
\begin{align*}
\frac{d\phi_{t}^{\lambda}}{dt} & =\lambda\sin(\alpha_{t}-2\phi_{t}^{\lambda})\\
 & =\lambda\sin(\alpha_{t}-2\phi_{t}^{\lambda}){\bf 1}_{\{a\leqslant\alpha_{t}<\frac{a+b}{2}\}}+\lambda\sin(\alpha_{t}-2\phi_{t}^{\lambda}){\bf 1}_{\{\frac{a+b}{2}\leqslant\alpha_{t}\leqslant b\}}.
\end{align*}
In the first region, we know that $\alpha_{t}-2\phi_{t}^{\lambda}\in[-\pi,0]$
and thus the sine function is non-positive. In the second region,
note that both of $\alpha_{t}-2\phi_{t}^{\lambda}$ and $b-2\phi_{t}^{\lambda}$
lie in $[-\frac{\pi}{2},\frac{\pi}{2}]$. As a result, we have
\[
\sin(\alpha_{t}-2\phi_{t}^{\lambda})\leqslant\sin(b-2\phi_{t}^{\lambda})
\]
in this region. Note moreover that $b-2\phi_{t}^{\lambda}\in[0,\frac{\pi}{2}]$
for $t\in[t_{1},t_{2}]$. It follows that 
\[
\frac{d\phi_{t}^{\lambda}}{dt}\leqslant\lambda\sin(b-2\phi_{t}^{\lambda}){\bf 1}_{\{\frac{a+b}{2}\leqslant\alpha_{t}\leqslant b\}}\leqslant\lambda(b-2\phi_{t}^{\lambda}).
\]
Equivalently, we have 
\[
\frac{d}{dt}\big(2e^{2\lambda t}\phi_{t}^{\lambda}\big)\leqslant2\lambda be^{2\lambda t}.
\]
By integrating the above inequality over $[t_{1},t_{2}],$ we arrive
at 
\[
2\phi_{t_{2}}^{\lambda}\leqslant b-(b-2\phi_{t_{1}}^{\lambda})e^{-2\lambda(t_{2}-t_{1})}<b,
\]
which is a contradiction. A similar argument also leads to a contradiction
in the case when $2\phi_{t}^{\lambda}$ hits $a$ before $b$. Therefore,
we conclude that $2\phi_{t}^{\lambda}\in(a,b)$ for all time.
\end{proof}
The next lemma quantifies how much $2\phi_{t}^{\lambda}$ can deviate
from an interval $[a,b]$ if the path $\alpha_{t}$ does not always
stay in this interval.
\begin{lem}
\label{lem:DeviPhi}Let $a,b\in\mathbb{R}$ be such that $0<b-a<\pi.$
Define 
\[
r\triangleq2\lambda\mu\big(\{t:\alpha_{t}\notin[a,b]\}\big),
\]
Suppose that $b-a+r<\pi$ and $2\phi_{0}^{\lambda}\in[a,b].$ Then
\[
2\phi_{t}^{\lambda}\in[a-r,b+r]\ \ \ \forall t\in[0,L].
\]
\end{lem}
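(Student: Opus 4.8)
The plan is to adapt the comparison argument from Lemma \ref{lem:RangePhi}, but now the "bad" set where $\alpha_t \notin [a,b]$ contributes an extra drift that must be accounted for quantitatively. Suppose for contradiction that $2\phi_t^\lambda$ exceeds $b+r$ at some time; the symmetric case (falling below $a-r$) is identical. Let $t_2$ be the first time $2\phi_t^\lambda = b+r$, and let $t_1 \triangleq \sup\{t < t_2 : 2\phi_t^\lambda < b\}$ (taking $t_1=0$ if no such time exists, which is impossible here since $2\phi_0^\lambda \in [a,b]$; actually $2\phi_{t_1}^\lambda = b$). On $[t_1,t_2]$ we have $2\phi_t^\lambda \in [b, b+r]$. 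Since $b-a+r<\pi$, note $b+r < a+\pi$, so on this interval $2\phi_t^\lambda \in (a, a+\pi)$ and the relevant angular differences stay in a controllable range.

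The key estimate: on $[t_1,t_2]$, split $\frac{d\phi_t^\lambda}{dt} = \lambda\sin(\alpha_t - 2\phi_t^\lambda)$ according to whether $\alpha_t \in [a,b]$ or not. When $\alpha_t \in [a,b]$, since $2\phi_t^\lambda \geqslant b \geqslant \alpha_t$, we get $\alpha_t - 2\phi_t^\lambda \leqslant 0$; moreover $\alpha_t - 2\phi_t^\lambda \geqslant a - (b+r) > -\pi$, so on this region $\sin(\alpha_t - 2\phi_t^\lambda) \leqslant 0$ and contributes nothing positive to the growth. On the bad region $\{\alpha_t \notin [a,b]\}$, we crudely bound $\frac{d\phi_t^\lambda}{dt} \leqslant \lambda$. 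Therefore
\[
2\phi_{t_2}^\lambda - 2\phi_{t_1}^\lambda \leqslant 2\lambda\,\mu\big(\{t \in [t_1,t_2] : \alpha_t \notin [a,b]\}\big) \leqslant 2\lambda\,\mu\big(\{t : \alpha_t \notin [a,b]\}\big) = r.
\]
Since $2\phi_{t_1}^\lambda = b$, this gives $2\phi_{t_2}^\lambda \leqslant b + r$, contradicting the strict inequality that would be needed to have left $[a-r,b+r]$ — more precisely, to conclude cleanly one should argue that once $2\phi_t^\lambda$ reaches $b+r$ it cannot strictly exceed it, by applying the same bound on any subsequent excursion, or simply observe that the above shows $2\phi_t^\lambda \leqslant b+r$ for all $t \geqslant t_1$ with $2\phi_t^\lambda \geqslant b$.

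I would set this up to show directly that $2\phi_t^\lambda \leqslant b+r$ for all $t$: define $\tau \triangleq \sup\{t : 2\phi_t^\lambda \leqslant b+r\}$ and, assuming $\tau < L$, use the continuity of $2\phi_t^\lambda$ together with the above drift bound on the last interval $[t_1,\tau]$ where $2\phi_t^\lambda \in [b,b+r]$ to derive $2\phi_\tau^\lambda \leqslant b + r - (\text{something nonnegative}) \leqslant b+r$ with the growth strictly less than $r$ unless the full bad set is used — in any case obtaining a contradiction with $2\phi_t^\lambda$ crossing above $b+r$. The lower bound $2\phi_t^\lambda \geqslant a-r$ follows by the mirror-image argument (when $\alpha_t \in [a,b]$ and $2\phi_t^\lambda \leqslant a$, then $\alpha_t - 2\phi_t^\lambda \in [0,\pi)$ so the sine is nonnegative, preventing further decrease, while the bad set contributes at most $-\lambda$ to the derivative).

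The main obstacle is the bookkeeping around excursions: $2\phi_t^\lambda$ could in principle oscillate in and out of $[a,b+r]$ many times, and one must ensure the bad-set measure $r$ is not "spent" more than once. The cleanest fix is the observation that on \emph{any} subinterval where $2\phi_t^\lambda \geqslant b$ the good region never contributes positive drift, so starting from any time when $2\phi_t^\lambda = b$ the value can increase by at most $r$ total over all subsequent such excursions combined — hence it never exceeds $b+r$. One subtlety worth stating explicitly: the hypothesis $b - a + r < \pi$ is exactly what guarantees that throughout the relevant excursion the argument $\alpha_t - 2\phi_t^\lambda$ stays in $(-\pi, \pi)$, so that the sign of the sine is determined by the sign of the argument; without this the comparison breaks down.
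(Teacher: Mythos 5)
Your approach is essentially the same as the paper's: assume $2\phi^{\lambda}$ escapes past $b+r$, locate the crossing interval from level $b$ upwards, split the angular ODE drift by whether $\alpha_t\in[a,b]$, use the hypothesis $b-a+r<\pi$ to keep the sine argument in $(-\pi,0]$ on the good set (so it only pulls $2\phi^{\lambda}$ down), and bound the bad-set contribution crudely by $2\lambda$ times its measure. You correctly flag the bookkeeping wrinkle — taking $t_2$ to be the \emph{first} time $2\phi^{\lambda}_t=b+r$ only yields $2\phi^{\lambda}_{t_2}\leqslant b+r$, not a contradiction; the paper resolves this cleanly by choosing $\tau$ with $b+r<2\phi^{\lambda}_\tau<a+\pi$ and setting $t_2\triangleq\inf\{t<\tau:2\phi^{\lambda}_t=2\phi^{\lambda}_\tau\}$, so the same estimate gives $2\phi^{\lambda}_{t_2}\leqslant b+r<2\phi^{\lambda}_\tau=2\phi^{\lambda}_{t_2}$, a genuine contradiction, and simultaneously ensures $2\phi^{\lambda}_t<a+\pi$ on the crossing interval.
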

\begin{proof}
Suppose on the contrary that $2\phi_{t}^{\lambda}$ exits the interval
$[a-r,b+r]$ at some time, and assume that it exits at the end point
$b+r.$ We can then find a time $\tau$ such that 
\[
b+r<2\phi_{\tau}^{\lambda}<a+\pi.
\]
Define 
\[
t_{2}\triangleq\inf\{t<\tau:2\phi_{t}^{\lambda}=2\phi_{\tau}^{\lambda}\},\ t_{1}\triangleq\sup\{t<t_{2}:2\phi_{t}^{\lambda}\in[a,b]\},
\]
and write 
\[
A\triangleq\{t:\alpha_{t}\in[a,b]\}.
\]
Note that $2\phi_{t_{1}}^{\lambda}=b.$ It follows from the angular
equation \eqref{eq:AEqn} that 
\begin{align*}
2\phi_{t_{2}}^{\lambda}-b & =-2\lambda\int_{t_{1}}^{t_{2}}\sin(2\phi_{t}^{\lambda}-\alpha_{t})dt\\
 & =-2\lambda\int_{[t_{1},t_{2}]\cap A}\sin(2\phi_{t}^{\lambda}-\alpha_{t})dt-2\lambda\int_{[t_{1},t_{2}]\cap A^{c}}\sin(2\phi_{t}^{\lambda}-\alpha_{t})dt\\
 & \leqslant-2\lambda\int_{[t_{1},t_{2}]\cap A^{c}}\sin(2\phi_{t}^{\lambda}-\alpha_{t})dt\\
 & \leqslant2\lambda\mu([t_{1},t_{2}]\cap A^{c}).
\end{align*}
Therefore, we have
\[
b+r<2\phi_{\tau}^{\lambda}=2\phi_{t_{2}}^{\lambda}\leqslant b+2\lambda\mu([t_{1},t_{2}]\cap A^{c})\leqslant b+r,
\]
which is a contradiction. The case when $2\phi_{t}^{\lambda}$ exits
$[a-r,b+r]$ through the end point $a-r$ can be treated in a similar
way. Consequently, we conclude that $2\phi_{t}^{\lambda}\in[a-r,b+r]$
for all time.
\end{proof}
\begin{rem}
Heuristically, Lemma \ref{lem:DeviPhi} tells us that the longer $\alpha_{t}$
stays in $[a,b]$ (equivalently the smaller $r$ is), the less will
$2\phi_{t}^{\lambda}$ deviate from $[a,b].$ In the special case
when $\alpha_{t}\in[a,b]$ a.a. $t\in[0,L]$ (i.e. when $r=0$), we
have $2\phi_{t}^{\lambda}\in[a,b]$ for all time. But this conclusion
slightly weaker than Lemma \ref{lem:RangePhi} since we have assumed
$b<a+\pi$ here.
\end{rem}
The final lemma quantifies how fast $2\phi_{t}^{\lambda}$ gets attracted
to the region where $\alpha_{t}$ stays for most of the time, if initially
$2\phi_{0}^{\lambda}$ is far away from this region.
\begin{lem}
\label{lem:CatTim}Let $a<b$ be such that $b-a<\pi$. Let $c<d$
and $\varepsilon>0$ be such that 
\[
[c-\varepsilon,d+\varepsilon]\subseteq(a,b)\ \text{and }\varepsilon<\pi-(b-a).
\]
Suppose that $\alpha_{t}\in[a,b]$ for a.a. $t\in[0,L]$, and 
\begin{equation}
2\phi_{0}^{\lambda}\in(a,b)\backslash[c-\varepsilon,d+\varepsilon].\label{eq:BadIni}
\end{equation}
Define $B\triangleq\{t:\alpha_{t}\in[c,d]\}$ and 
\[
\tau\triangleq\inf\{t:2\phi_{t}^{\lambda}\in[c-\varepsilon,d+\varepsilon]\}.
\]
Then 
\begin{equation}
\tau\leqslant\frac{b-a}{2\lambda\sin\varepsilon}+\frac{1+\sin\varepsilon}{\sin\varepsilon}\mu(B^{c}).\label{eq:CatTim}
\end{equation}
\end{lem}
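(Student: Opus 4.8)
The plan is to monitor the scalar quantity $\theta_{t}\triangleq2\phi_{t}^{\lambda}$, which by \eqref{eq:AEqn} is absolutely continuous with $\theta_{t}'=2\lambda\sin(\alpha_{t}-\theta_{t})$ for a.a.\ $t$. Reflecting all angles if necessary, I may assume that $\theta_{0}>d+\varepsilon$; the alternative $\theta_{0}<c-\varepsilon$ is symmetric. Before estimating, I would record two confinement facts. First, Lemma~\ref{lem:DeviPhi} applied to the interval $[a,b]$ with $r=0$ (legitimate since $\alpha_{t}\in[a,b]$ a.e., $0<b-a<\pi$ and $\theta_{0}\in(a,b)$) gives $\theta_{t}\in[a,b]$ for every $t\in[0,L]$. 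Second, by continuity of $\theta$ and the definition of $\tau$, one has $\theta_{t}>d+\varepsilon$ for all $t\in[0,\tau)$ and $\theta_{\tau}=d+\varepsilon$.

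Next I would bound $\theta_{t}'$ separately on $B=\{\alpha_{t}\in[c,d]\}$ and on $B^{c}$, for $t\in[0,\tau)$. On $B$ one has $\alpha_{t}-\theta_{t}\in[c-b,-\varepsilon]$, so $\theta_{t}-\alpha_{t}\in[\varepsilon,b-c]$; the constraints $[c-\varepsilon,d+\varepsilon]\subseteq(a,b)$ and $\varepsilon<\pi-(b-a)$ give $b-c<\pi-\varepsilon$ and $\varepsilon<\pi/2$, hence $[\varepsilon,b-c]\subseteq[\varepsilon,\pi-\varepsilon]$, on which $\sin\geqslant\sin\varepsilon$. Therefore $\sin(\alpha_{t}-\theta_{t})=-\sin(\theta_{t}-\alpha_{t})\leqslant-\sin\varepsilon$, i.e.\ $\theta_{t}'\leqslant-2\lambda\sin\varepsilon$ on $[0,\tau)\cap B$. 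On $[0,\tau)\cap B^{c}$ I would only use the trivial bound $\theta_{t}'\leqslant2\lambda$.

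Integrating over $[0,\tau)$ and using $\theta_{\tau}=d+\varepsilon$ and $\theta_{0}<b$,
\[
-(b-a)<(d+\varepsilon)-b\leqslant\theta_{\tau}-\theta_{0}=\int_{0}^{\tau}\theta_{t}'\,dt\leqslant-2\lambda\sin\varepsilon\cdot\mu\big([0,\tau)\cap B\big)+2\lambda\mu(B^{c}),
\]
so $\mu([0,\tau)\cap B)\leqslant\frac{b-a}{2\lambda\sin\varepsilon}+\frac{1}{\sin\varepsilon}\mu(B^{c})$. Since $\tau=\mu([0,\tau)\cap B)+\mu([0,\tau)\cap B^{c})\leqslant\mu([0,\tau)\cap B)+\mu(B^{c})$, the estimate \eqref{eq:CatTim} follows at once. (If $\theta_{t}$ should never meet $[c-\varepsilon,d+\varepsilon]$ on $[0,L]$, running the identical computation up to time $L$ instead shows that $L$ is already bounded by the right-hand side, so there is nothing to prove.)

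The trigonometric bookkeeping and the arithmetic with the constraints on $a,b,c,d,\varepsilon$ are routine. The one point that really matters is the initial confinement $\theta_{t}\in[a,b]$ supplied by Lemma~\ref{lem:DeviPhi}: it is precisely this that upgrades the naive observation ``$\sin(\alpha_{t}-\theta_{t})$ is negative on $B$'' to the quantitative drift $\theta_{t}'\leqslant-2\lambda\sin\varepsilon$. Without it, $\alpha_{t}-\theta_{t}$ could approach $-\pi$ and the mean-reverting drift would degenerate, so this is the step I expect to be the crux.
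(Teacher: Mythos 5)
Your proof is correct and follows essentially the same route as the paper: confine $2\phi_t^\lambda$ to $[a,b]$, decompose the drift over $B$ and $B^c$, use the sine lower bound $\sin\varepsilon$ on the good set, integrate, and rearrange. The only noteworthy deviation is that you obtain the confinement $2\phi_t^\lambda\in[a,b]$ by invoking Lemma~\ref{lem:DeviPhi} with $r=0$, whereas the paper cites Lemma~\ref{lem:RangePhi}; your choice is arguably the cleaner one, since Lemma~\ref{lem:RangePhi} as stated only gives $2\phi_t^\lambda\in(a,a+\pi)$ and a second application (with $a$ replaced by $b-\pi$) is needed to pin down the upper bound $b$. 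Your bookkeeping also bounds $\mu([0,\tau)\cap B^c)$ by $\mu(B^c)$ a step earlier than the paper does, but the final constant is identical.
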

\begin{proof}
First of all, we know from Lemma \ref{lem:RangePhi} that $2\phi_{t}^{\lambda}\in[a,b]$
for all $t$. In view of the assumption (\ref{eq:BadIni}), suppose
that $2\phi_{0}^{\lambda}\in(d+\varepsilon,b).$ Then for a.a. $t\in[0,\tau],$
we have 
\begin{align*}
\frac{d}{dt}(2\phi_{t}^{\lambda}) & =-2\lambda\sin(2\phi_{t}^{\lambda}-\alpha_{t})\\
 & =-2\lambda\sin(2\phi_{t}^{\lambda}-\alpha_{t}){\bf 1}_{B}-2\lambda\sin(2\phi_{t}^{\lambda}-\alpha_{t}){\bf 1}_{B^{c}}.
\end{align*}
Note that for $t\in B\cap[0,\tau]$ we have
\[
\varepsilon\leqslant2\phi_{t}^{\lambda}-\alpha_{t}\leqslant b-a<\pi.
\]
Since $\varepsilon<\pi-(b-a),$ it follows that 
\[
\sin(2\phi_{t}^{\lambda}-\alpha_{t})\geqslant\sin\varepsilon\ \ \ {\rm on}\ B\cap[0,\tau].
\]
Therefore, 
\[
\frac{d}{dt}(2\phi_{t}^{\lambda})\leqslant-2\lambda(\sin\varepsilon){\bf 1}_{B}+2\lambda{\bf 1}_{B^{c}}\ \ \ {\rm a.a.\ }t\in[0,\tau].
\]
By integrating the above inequality, we obtain 
\begin{align*}
2\phi_{\tau}^{\lambda} & \leqslant2\phi_{0}^{\lambda}-2\lambda\sin\varepsilon\cdot\mu(B\cap[0,\tau])+2\lambda\mu(B^{c}\cap[0,\tau])\\
 & =2\phi_{0}^{\lambda}-2\lambda\sin\varepsilon\cdot(\tau-\mu(B^{c}\cap[0,\tau]))+2\lambda\mu(B^{c}\cap[0,\tau])\\
 & =2\phi_{0}^{\lambda}-2\lambda\tau\sin\varepsilon+2\lambda(1+\sin\varepsilon)\mu(B^{c}\cap[0,\tau]).
\end{align*}
Therefore, 
\begin{align*}
2\lambda\tau\sin\varepsilon & \leqslant2\phi_{0}^{\lambda}-2\phi_{\tau}^{\lambda}+2\lambda(1+\sin\varepsilon)\mu(B^{c}\cap[0,\tau])\\
 & \leqslant b-a+2\lambda(1+\sin\varepsilon)\mu(B^{c}\cap[0,\tau]).
\end{align*}
Rearranging the terms gives the estimate (\ref{eq:CatTim}). A similar
argument gives the same conclusion for the case $2\phi_{0}^{\lambda}\in(a,c-\varepsilon)$.
Note that the situation when $\tau=L$ (i.e. when $2\phi_{t}^{\lambda}$
never enters $[c-\varepsilon,d+\varepsilon]$) is included in the
above argument.
\end{proof}
\begin{rem}
Heuristically, Lemma \ref{lem:CatTim} tells us that, if $\alpha_{t}$
stays in $[c,d]$ for most of the time (i.e. $\mu(B^{c})$ is small)
and if $\lambda$ is large, it takes a short period of time for $2\phi_{t}^{\lambda}$
to enter the interval $[c-\varepsilon,d+\varepsilon]$ (i.e. $\tau$
is small).
\end{rem}
\begin{rem}
The fact that $b-a<\pi$ is critical to make use of the monotonicity
property of the sine function in the proof of Lemma \ref{lem:CatTim}.
The precise use of this lemma in the proof of the main theorem requires
a minor technical modification (cf. Lemma \ref{lem:EntryTimeEst}
below).
\end{rem}

\section{\label{sec:PfMthm}Proof of the main theorem}

In this section, we develop the proof of Theorem \ref{thm:Mthm}.
To make our strategy more transparent, we first prove the theorem
under the global assumption of regular cusps (cf. Definition \ref{def:Cusp}).
This part contains the essential idea of the proof. After that, we
localise the result to the context of strongly tree-reduced paths
(cf. Definition \ref{def:SRed}).

\subsection{\label{subsec:Glo}Proof of Theorem \ref{thm:Mthm}: the global case}

Suppose that $\gamma:[0,L]\rightarrow\mathbb{R}^{2}$ is a path defined
by (\ref{eq:DefPath}), where the angular path $\beta:[0,L]\rightarrow\mathbb{R}$
is a given measurable function. In this subsection, we aim at proving
the following result.
\begin{thm}
\label{thm:LConGlo}Suppose that $\gamma$ is a regular cusp in the
sense of Definition \ref{def:Cusp}. Then the signature asymptotics
formula (\ref{eq:LengConj}) holds.
\end{thm}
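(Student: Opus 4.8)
The plan is to combine the lower estimate of Lemma \ref{lem:IntLow} with the three structural lemmas of Section \ref{sec:GloLem}, suitably applied on each of the pieces produced by the regular-cusp decomposition. Since $L_{1}(\gamma)\leqslant L$ always holds by the triangle inequality, it suffices to show
\[
\overline{\lim}_{\lambda\rightarrow\infty}\int_{0}^{L}\cos(\alpha_{t}-2\phi_{t}^{\lambda})\,dt\geqslant L,
\]
for a single, conveniently chosen sequence of initial angles $\phi_{0}^{\lambda}$. Equivalently, since $\cos\leqslant 1$, it is enough to prove that for every $\eta>0$ one has $\mu\big(\{t\in[0,L]:|\alpha_{t}-2\phi_{t}^{\lambda}|>\eta\}\big)\to 0$ as $\lambda\to\infty$: the angular path $2\phi_{t}^{\lambda}$ tracks $\alpha_{t}$ for all but a vanishing fraction of time. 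I would fix $\eta>0$ and fix $\delta>0$ small (to be sent to $0$ afterwards), and invoke Definition \ref{def:Cusp}(ii) to get a closed set $F$, a finite union of closed intervals, with $\mu(F^{c})<\delta$ and $\alpha_{t}\in[a_{\delta},b_{\delta}]\subseteq(a,a+\pi)$ for a.e.\ $t\in F$.

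The main step is the tracking estimate on a single closed interval $I=[p,q]\subseteq F$ (so $\alpha_{t}\in[a_{\delta},b_{\delta}]$ on $I$ with $b_{\delta}-a_{\delta}<\pi$). Here I would argue in two phases. \emph{Phase one (entry):} by Lemma \ref{lem:RangePhi} applied with the global bound $\alpha_t\in[a,a+\pi]$ — which persists for all time — $2\phi_{t}^{\lambda}$ stays in $(a,a+\pi)$ provided the initial angle does; then, restarting the dynamics at time $p$ and applying a variant of Lemma \ref{lem:CatTim} with $[c,d]=[a_{\delta},b_{\delta}]$ and a small $\varepsilon$, we get that $2\phi_{t}^{\lambda}$ enters the $\varepsilon$-neighbourhood of $[a_{\delta},b_{\delta}]$ within time $O(1/\lambda)+O(\mu(B^{c}))$, where $B=\{t\in I:\alpha_{t}\in[a_{\delta},b_{\delta}]\}$ has $\mu(B^{c}\cap I)=0$ by construction — so the entry time is $O(1/\lambda)$ on $I$. \emph{Phase two (fine tracking):} once $2\phi_{t}^{\lambda}$ is within $\varepsilon$ of $[a_{\delta},b_{\delta}]$, I want to show it actually relaxes to within $\eta$ of $\alpha_{t}$ except on a short sub-set of $I$. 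This is the genuinely delicate part because $\alpha_t$ is only measurable, so $2\phi_t^\lambda$ cannot literally converge to it; instead I would partition $[a_\delta,b_\delta]$ into finitely many sub-intervals of length $<\eta/2$, and on each sub-interval $J$ apply Lemma \ref{lem:DeviPhi} with $[a,b]=J$ and $r=2\lambda\mu(\{t: \alpha_t\notin J\})$, combined with a Chebyshev/occupation-time argument: if $2\phi_t^\lambda$ spent a non-negligible fraction of time at distance $>\eta$ from $\alpha_t$, the drift $-2\lambda\sin(2\phi_t^\lambda-\alpha_t)$ would accumulate and push $2\phi_t^\lambda$ out of $(a,a+\pi)$, contradicting Lemma \ref{lem:RangePhi}. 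Quantitatively, the time spent with $|2\phi_t^\lambda-\alpha_t|>\eta$ on $I$ is bounded by $(b-a)/(2\lambda\sin\eta)=O(1/\lambda)$.

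Assembling: $F$ is a finite disjoint union $I_1,\dots,I_N$ of closed intervals. On each $I_k$ the bad set (entry phase plus tracking phase) has measure $O(1/\lambda)$, with the $O$ depending on $N$ and $\eta$ but not on $\lambda$; outside $F$ we have the trivial bound $\mu(F^{c})<\delta$. Hence
\[
\mu\big(\{t\in[0,L]:|\alpha_{t}-2\phi_{t}^{\lambda}|>\eta\}\big)\leqslant \delta + \frac{C(N,\eta)}{\lambda},
\]
so $\overline{\lim}_{\lambda\to\infty}$ of the left side is $\leqslant\delta$. Therefore $\overline{\lim}_{\lambda\to\infty}\int_{0}^{L}\cos(\alpha_{t}-2\phi_{t}^{\lambda})\,dt\geqslant (L-\delta)\cos\eta - \delta$, and letting $\eta\to 0$ and then $\delta\to 0$ gives $L_{1}(\gamma)\geqslant L$, which together with the reverse inequality yields (\ref{eq:LengConj}). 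The main obstacle I anticipate is Phase two: turning the heuristic "strong mean-reversion" into a rigorous occupation-time bound that survives the lack of continuity of $\alpha_t$ and the possibility that $2\phi_t^\lambda$ repeatedly enters and leaves the $\eta$-tube around $\alpha_t$; handling the boundary sub-intervals of the partition of $[a_\delta,b_\delta]$ and the transitions between the $I_k$ (re-initialising the dynamics, keeping $2\phi^\lambda$ trapped in $(a,a+\pi)$ throughout) will require care, and is presumably why the paper isolates Lemmas \ref{lem:RangePhi}--\ref{lem:CatTim} as separate global statements before localising.
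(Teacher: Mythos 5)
Your overall plan — using Lemma \ref{lem:IntLow} to reduce to showing that $2\phi_t^\lambda$ tracks $\alpha_t$ for most of the time, and controlling the tracking via Lemmas \ref{lem:RangePhi}--\ref{lem:CatTim} — is indeed the paper's strategy, and Phase one is fine. The gap is Phase two, and it is not a small technical gap: the occupation-time bound you state, namely that $\mu\{t\in I:|2\phi_t^\lambda-\alpha_t|>\eta\}\leqslant (b-a)/(2\lambda\sin\eta)=O(1/\lambda)$, is false for a general measurable $\alpha$. If $\alpha_t$ oscillates between, say, $a_\delta$ and $b_\delta$ on alternating sets of measure comparable to $1/\lambda$, then $2\phi_t^\lambda$ never has time to settle near $\alpha_t$, and the set where $|2\phi_t^\lambda-\alpha_t|$ is of order $b_\delta-a_\delta$ has measure of order $|I|$, not $O(1/\lambda)$. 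The mean-reversion argument you invoke also runs the wrong way: the drift $-2\lambda\sin(2\phi_t^\lambda-\alpha_t)$ is sign-alternating and is precisely what \emph{prevents} $2\phi_t^\lambda$ from leaving $(a,a+\pi)$ (this is Lemma \ref{lem:RangePhi}); it does not accumulate in one direction, so no contradiction with confinement can be extracted from ``spending a lot of time far from $\alpha_t$.'' Your proposed rescue — partitioning the \emph{range} $[a_\delta,b_\delta]$ into sub-intervals $J$ and applying Lemma \ref{lem:DeviPhi} with $[a,b]=J$ — also cannot work as stated: for a fixed small $J$ the quantity $r=2\lambda\,\mu\{t:\alpha_t\notin J\}$ is typically of order $\lambda\,|I|$, which blows up and violates the hypothesis $b-a+r<\pi$.

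What is missing is the paper's combination of Lusin's theorem with a \emph{time}-discretisation at scale $\sim 1/\lambda$. The paper first extracts, for each $\eta>0$, a compact $F_2\subseteq F_1$ with $\mu(F_1\setminus F_2)<\eta$ on which $\alpha$ is uniformly continuous, hence admits a modulus $\rho=\rho(\varepsilon,\eta,\delta)$. It then partitions $F_1$ into $n=[\varepsilon\sin\varepsilon\,\lambda]+1$ equal time-intervals $I_i^n$; for $\lambda$ large these have length $<\rho$, so on each $I_i^n$ the restriction $\alpha|_{F_2\cap I_i^n}$ lies in an angular interval $[\alpha_i^n,\beta_i^n]$ of length $<\varepsilon$. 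On each such short time-interval, the localised entry-time estimate (Lemma \ref{lem:EntryTimeEst}, a variant of Lemma \ref{lem:CatTim}) gives a waiting time $\sigma_i^n\lesssim \frac{\pi}{2\lambda\sin\varepsilon}+\frac{1+\sin\varepsilon}{\sin\varepsilon}\mu((F_1\setminus F_2)\cap I_i^n)$; the specific choice $n\sim\varepsilon\sin\varepsilon\,\lambda$ makes $\sum_i\sigma_i^n=O(\varepsilon)+O(\eta)$. After entry, Lemma \ref{lem:DeviPhi} is applied on each $I_i^n$ with $[a,b]=[\alpha_i^n-\varepsilon,\beta_i^n+\varepsilon]$; the hypothesis $b-a+r<\pi$ is only manageable because the bad-interval bookkeeping (the split $\mathcal{B}_n/\mathcal{G}_n$ with an extra parameter $M$, using $|\mathcal{B}_n|\leqslant n/M$) isolates the few $I_i^n$ where $F_1\setminus F_2$ concentrates. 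The final bad set thus has measure $O(\delta)+O(\eta)+O(\varepsilon)+O(1/M)$, not $O(1/\lambda)$, and the four parameters must be sent to their limits in the particular order $\lambda\to\infty$, $\eta\to 0$, $M\to\infty$, $\varepsilon\to 0$, $\delta\to 0$. You correctly flag Phase two as ``the main obstacle'' and as sensitive to the lack of continuity of $\alpha_t$, but the proposal as written does not supply the idea (Lusin plus $1/\lambda$-scale time-slicing with the bad-interval count) that actually closes it.
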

Vaguely speaking, our strategy is to analyse the local behaviour of
the angle dynamics \eqref{eq:AEqn} on each sub-interval of suitable
partitions of $[0,L],$ and then to examine how these microscopic
effects accumulate on the global scale. The analysis for the former
point is based on suitable localisation of the results obtained in
Section \ref{sec:GloLem}.

We now develop the precise details of the proof of Theorem \ref{thm:LConGlo}.
To better convey the logic and reasoning, we divide the argument into
several major steps. Recall that $\alpha_{t}\triangleq\beta_{L-t}.$
From Definition \ref{def:Cusp} (i), we know that the angular path
$\alpha_{t}$ satisfies 
\[
\alpha_{t}\in[a,a+\pi]\ \ \ {\rm for\ a.a.\ }t\in[0,L],
\]
where $a\in\mathbb{R}$ is given fixed. If one does not want to bother
with cusps, the argument below appears to be simpler under the assumption
that $\alpha_{t}\in[a,b]$ for a.a. $t$ where $b-a<\pi$ (cf. Example
\ref{exa:NoCusp}).

\subsubsection{\label{subsec:S1}Step one: localising the path $\alpha_{t}$}

Let $\delta>0$ be fixed. According to Definition \ref{def:Cusp}
(ii), there is a closed subset $F_{1}\subseteq[0,L]$, which is a
finite disjoint union of closed intervals, as well as two real numbers
$a_{\delta}>a,b_{\delta}<b\triangleq a+\pi$, such that $\mu(F_{1}^{c})<\delta$
and 
\[
\alpha_{t}\in[a_{\delta},b_{\delta}]\ \ \ {\rm for\ a.a.}\ t\in F_{1}.
\]

To proceed further, we first recall the classical Lusin's theorem
(cf. Folland \cite{Folland99}) as follows.
\begin{thm}
Let $f:[p,q]\rightarrow\mathbb{C}$ be a Lebesgue measurable function.
Then for any $\eta>0$, there exists a compact set $E\subseteq[p,q],$
such that $\mu(E^{c})<\eta$ and $f|_{E}$ is continuous.
\end{thm}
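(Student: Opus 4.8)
The plan is to prove the classical Lusin theorem by the standard route: approximate $f$ by simple functions, render each simple function continuous on a large compact set using inner regularity of Lebesgue measure, and then glue everything together with Egorov's theorem so that $f|_{E}$ becomes a uniform limit of continuous functions. First I would reduce to the real-valued case: writing $f=u+iv$ with $u,v$ real measurable, applying the statement to $u$ and $v$ with $\eta/2$ in place of $\eta$, and intersecting the two resulting compact sets. So assume $f:[p,q]\rightarrow\mathbb{R}$ is (finite-valued) measurable.

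Next I would invoke the usual dyadic-truncation construction to obtain simple measurable functions $s_{n}=\sum_{j=1}^{k_{n}}c_{n,j}\mathbf{1}_{A_{n,j}}$, where for each fixed $n$ the sets $A_{n,j}$ are pairwise disjoint and measurable with $\bigcup_{j}A_{n,j}=[p,q]$, and $s_{n}(t)\rightarrow f(t)$ for every $t\in[p,q]$. By inner regularity of Lebesgue measure choose, for each pair $(n,j)$, a compact set $K_{n,j}\subseteq A_{n,j}$ with $\mu(A_{n,j}\setminus K_{n,j})<\eta\,2^{-(n+2)}/k_{n}$, and set $E_{n}\triangleq\bigcup_{j=1}^{k_{n}}K_{n,j}$. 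Then $E_{n}$ is compact and, since $[p,q]\setminus E_{n}=\bigcup_{j}(A_{n,j}\setminus K_{n,j})$, one gets $\mu([p,q]\setminus E_{n})<\eta\,2^{-(n+2)}$. Crucially, $K_{n,1},\dots,K_{n,k_{n}}$ are pairwise disjoint compact subsets of the metric space $[p,q]$, hence separated by positive distances, so $s_{n}$ is locally constant on $E_{n}$; in particular $s_{n}|_{E_{n}}$ is continuous.

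Then I would apply Egorov's theorem (cf. \cite{Folland99}): since $\mu([p,q])<\infty$ and $s_{n}\rightarrow f$ pointwise, there is a measurable set $G$ with $\mu([p,q]\setminus G)<\eta/4$ on which $s_{n}\rightarrow f$ uniformly, and by inner regularity again one may replace $G$ by a compact $G'\subseteq G$ with $\mu([p,q]\setminus G')<\eta/2$, still with uniform convergence on $G'$. Set
\[
E\triangleq G'\cap\bigcap_{n\geqslant1}E_{n}.
\]
As a closed subset of the compact set $[p,q]$, $E$ is compact, and
\[
\mu([p,q]\setminus E)\leqslant\mu([p,q]\setminus G')+\sum_{n\geqslant1}\mu([p,q]\setminus E_{n})<\frac{\eta}{2}+\sum_{n\geqslant1}\frac{\eta}{2^{n+2}}=\frac{3\eta}{4}<\eta.
\]
On $E$ each $s_{n}$ is continuous (it is the restriction of $s_{n}|_{E_{n}}$), and $s_{n}\rightarrow f$ uniformly since $E\subseteq G'$; a uniform limit of continuous functions on a metric space is continuous, so $f|_{E}$ is continuous, completing the proof.

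There is no genuine obstacle here; the only points requiring care are the bookkeeping that keeps every intermediate set compact (handled by repeated appeals to inner regularity) and the elementary observation that a simple function is continuous when restricted to a finite union of pairwise disjoint compact sets. If in addition one wanted a continuous function on all of $[p,q]$ agreeing with $f$ on $E$, one would invoke the Tietze extension theorem, but this is not needed for the statement as given.
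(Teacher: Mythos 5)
Your proof is correct and complete; this is precisely the standard simple-function/Egorov argument for Lusin's theorem. Note that the paper does not prove this statement at all --- it is quoted as a classical result with a citation to Folland \cite{Folland99}, and your argument is essentially the proof found there, so there is nothing to compare beyond confirming that the details (inner regularity at each stage to keep the sets compact, the positive separation of finitely many disjoint compact sets making each $s_{n}|_{E_{n}}$ continuous, and the final uniform-limit step) are all handled correctly.
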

\noindent  Let $\eta>0$ be another given number, and let $\varepsilon>0$
be such that 
\begin{equation}
\varepsilon<\min\big\{ a_{\delta}-a,b-b_{\delta},\frac{\pi}{4}\big\}.\label{eq:ChoEps}
\end{equation}
Note that $\varepsilon$ is independent of $\eta.$ According to the
above Lusin's theorem, we can choose a compact subset $F_{2}\subseteq F_{1},$
such that $\mu(F_{1}\backslash F_{2})<\eta$ and $\alpha|_{F_{2}}$
is (uniformly) continuous. As a result, there exists $\rho>0,$ such
that

\[
s,t\in F_{2},\ |t-s|<\rho\implies|\alpha_{t}-\alpha_{s}|<\varepsilon.
\]
Since $\alpha_{t}\in[a_{\delta},b_{\delta}]$ a.e. on $F_{1},$ by
further reducing $F_{2}$ if necessary, we may assume that 
\[
\alpha_{t}\in[a_{\delta},b_{\delta}]\ \ \ \text{for \textit{every} }t\in F_{2}.
\]

Recall that $F_{1}$ is a finite disjoint union of, say, $N_{\delta}$
closed intervals. Given $\lambda>0,$ we set 
\begin{equation}
n\triangleq[c\cdot\lambda]+1\ {\rm where}\ c\triangleq\varepsilon\sin\varepsilon.\label{eq:ChoPar}
\end{equation}
The reason for choosing this $c$ will be clear later on. We consider
the partition ${\cal P}_{n}=\{t_{i}^{n}\}_{0\leqslant i\leqslant n}$
of $F_{1}$ which divide each closed interval in $F_{1}$ into small
sub-intervals of equal length. When $\lambda$ (and thus $n$) is
large enough, we can ensure that 
\begin{equation}
{\rm mesh}{\cal P}_{n}=\frac{\mu(F_{1})}{n}<\rho.\label{eq:Mesh}
\end{equation}

\noindent \textbf{Note}. We have introduced several parameters $\delta,\varepsilon,\eta,\lambda$.
At some point later on, we will introduce one more independent parameter
$M.$ All these parameters need to pass to the limit in the last step.
It may be helpful to keep in mind that the following order of taking
limits will be implemented eventually:
\begin{equation}
\lambda\rightarrow\infty,\eta\rightarrow0^{+},M\rightarrow\infty,\varepsilon\rightarrow0^{+},\delta\rightarrow0^{+}.\label{eq:LimOrd}
\end{equation}

In what follows, we work with any given $\lambda>0$ that satisfies
(\ref{eq:Mesh}). This is legal in the spirit of (\ref{eq:LimOrd}),
since the first limiting procedure we will take is sending $\lambda\rightarrow\infty$.
For each $1\leqslant i\leqslant n,$ we write $I_{i}^{n}\triangleq[t_{i-1}^{n},t_{i}^{n}]$
and define 
\[
\alpha_{i}^{n}\triangleq\inf\{\alpha_{t}:t\in F_{2}\cap I_{i}^{n}\},\ \beta_{i}^{n}\triangleq\sup\{\alpha_{t}:t\in F_{2}\cap I_{i}^{n}\}.
\]
Note that 
\begin{equation}
0\leqslant\beta_{i}^{n}-\alpha_{i}^{n}<\varepsilon\ \text{and }\alpha_{i}^{n},\beta_{i}^{n}\in[a_{\delta},b_{\delta}].\label{eq:CtyEst}
\end{equation}

\subsubsection{Step two: the local behaviour of the angle dynamics}

Now we consider the ${\rm SL}_{2}(\mathbb{R})$-development of $\gamma_{t}$
constructed in Section \ref{subsec:SLDev}. Recall that the function
$\phi_{t}^{\lambda}$ satisfies the angular equation \eqref{eq:AEqn}.
We assume that $2\phi_{0}^{\lambda}\in(a,b).$ The core of our argument
concerns with understanding the local behaviour of $2\phi_{t}^{\lambda}$
on each sub-interval $I_{i}^{n}$ and its accumulated effect on the
global scale. In particular, there are two key points that we shall
establish in a precise way:

\vspace{2mm} \noindent (i) The time it takes $2\phi_{t}^{\lambda}$
($t\in I_{i}^{n}$) to enter the interval $[\alpha_{i}^{n}-\varepsilon,\beta_{i}^{n}+\varepsilon]$
adds up (over $i$) to a negligible quantity;\\
(ii) Once $2\phi_{t}^{\lambda}\in[\alpha_{i}^{n}-\varepsilon,\beta_{i}^{n}+\varepsilon]$
at some $t\in I_{i}^{n},$ the portion of $2\phi_{u}^{\lambda}$ on
$[t,t_{i}^{n}]$ provides a main contribution in the lower estimate
of the radial function $\rho_{t}^{\lambda}$ defined by (\ref{eq:Rad})
(or equivalently, the integral appearing in (\ref{eq:IntegralLow})).\\

We quantify these two points precisely in Step Three below. The main
ingredient in the current step is the following localised version
of Lemma \ref{lem:CatTim}. Let us introduce 
\[
\tau_{i}^{n}\triangleq\inf\{t\in I_{i}^{n}:2\phi_{t}^{\lambda}\in[\alpha_{i}^{n}-\varepsilon,\beta_{i}^{n}+\varepsilon]\},\ \ \ \sigma_{i}^{n}\triangleq\tau_{i}^{n}-t_{i-1}^{n}.
\]
The quantity $\sigma_{i}^{n}$ gives the amount of time within $I_{i}^{n}$
before $2\phi_{t}^{\lambda}$ enters the ``good'' region $[\alpha_{i}^{n}-\varepsilon,\beta_{i}^{n}+\varepsilon]$.
If $2\phi_{t_{i-1}^{n}}^{\lambda}\in[\alpha_{i}^{n}-\varepsilon,\beta_{i}^{n}+\varepsilon]$,
we trivially have $\sigma_{i}^{n}=0.$ Otherwise, we have the following
estimate for the time period $\sigma_{i}^{n}$, which is a minor adaptation
of Lemma \ref{lem:CatTim}.
\begin{lem}
\label{lem:EntryTimeEst}The quantity $\sigma_{i}^{n}$ satisfies
the following estimate:
\[
\sigma_{i}^{n}\leqslant\frac{\pi}{2\lambda\sin\varepsilon}+\frac{1+\sin\varepsilon}{\sin\varepsilon}\mu((F_{1}\backslash F_{2})\cap I_{i}^{n}).
\]
\end{lem}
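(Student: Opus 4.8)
The plan is to follow the same strategy as in the proof of Lemma \ref{lem:CatTim}, but applied on the sub-interval $I_{i}^{n}$ rather than on the whole of $[0,L]$, with the target interval $[c,d]$ there replaced by $[\alpha_{i}^{n},\beta_{i}^{n}]$ and the enlarged target $[c-\varepsilon,d+\varepsilon]$ replaced by $[\alpha_{i}^{n}-\varepsilon,\beta_{i}^{n}+\varepsilon]$. First I would record the relevant inclusions: by Lemma \ref{lem:RangePhi} (applied with the global bound $\alpha_{t}\in[a,a+\pi]$ and the hypothesis $2\phi_{0}^{\lambda}\in(a,b)$) we have $2\phi_{t}^{\lambda}\in(a,b)$ for all $t$; and since $\alpha_{i}^{n},\beta_{i}^{n}\in[a_{\delta},b_{\delta}]$ with $\varepsilon<\min\{a_{\delta}-a,\,b-b_{\delta},\,\pi/3\}$, the enlarged target interval $[\alpha_{i}^{n}-\varepsilon,\beta_{i}^{n}+\varepsilon]$ sits strictly inside $(a,b)$ and has length $<\varepsilon+\varepsilon+(b-a)$; more to the point, $\varepsilon<\pi-(b-a)$ is \emph{not} automatic here since $b-a=\pi$, so the monotonicity argument of Lemma \ref{lem:CatTim} cannot be invoked verbatim and must be re-run by hand using the finer information available on $I_{i}^{n}$.

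The substantive point is exactly this last one, and it is where the ``minor adaptation'' lies. On $I_{i}^{n}$ one does \emph{not} have $\alpha_{t}\in[\alpha_{i}^{n},\beta_{i}^{n}]$ for a.a.\ $t$ — that holds only on $F_{2}\cap I_{i}^{n}$. So I would split $I_{i}^{n}$ into the ``good'' set $G_{i}\triangleq F_{2}\cap I_{i}^{n}$ and its complement (within $I_{i}^{n}$), and on $[0,\tau_{i}^{n})$ estimate the angular equation \eqref{eq:AEqn} as
\[
\frac{d}{dt}(2\phi_{t}^{\lambda})=-2\lambda\sin(2\phi_{t}^{\lambda}-\alpha_{t})\mathbf{1}_{G_{i}}-2\lambda\sin(2\phi_{t}^{\lambda}-\alpha_{t})\mathbf{1}_{G_{i}^{c}}.
\]
Suppose $2\phi_{t_{i-1}^{n}}^{\lambda}>\beta_{i}^{n}+\varepsilon$ (the symmetric case $<\alpha_{i}^{n}-\varepsilon$ is identical). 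On $G_{i}\cap[t_{i-1}^{n},\tau_{i}^{n})$ we have $\alpha_{t}\in[\alpha_{i}^{n},\beta_{i}^{n}]$ and $2\phi_{t}^{\lambda}\geqslant\beta_{i}^{n}+\varepsilon$, so $2\phi_{t}^{\lambda}-\alpha_{t}\in[\varepsilon,\,b-a]=[\varepsilon,\pi]$; since $\varepsilon\leqslant\pi/3<\pi-\varepsilon$ the sine on this arc is bounded below by $\min\{\sin\varepsilon,\sin(\pi-\varepsilon)\}=\sin\varepsilon$ — wait, at the endpoint $\pi$ the sine vanishes, so strictly one needs $2\phi_{t}^{\lambda}-\alpha_{t}$ bounded away from $\pi$. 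This is the one genuine subtlety: because $b-a=\pi$ we only get $2\phi_{t}^{\lambda}-\alpha_{t}\leqslant\pi$, not $\leqslant\pi-\varepsilon$. I would handle it by observing that $2\phi_{t}^{\lambda}<b=a+\pi$ strictly while $\alpha_{t}\geqslant\alpha_{i}^{n}\geqslant a_{\delta}>a$, hence $2\phi_{t}^{\lambda}-\alpha_{t}<\pi-(a_{\delta}-a)<\pi-\varepsilon$ by \eqref{eq:ChoEps}; combined with the lower bound $\geqslant\varepsilon$ this gives $\sin(2\phi_{t}^{\lambda}-\alpha_{t})\geqslant\sin\varepsilon$ on $G_{i}\cap[t_{i-1}^{n},\tau_{i}^{n})$. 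On $G_{i}^{c}$ one uses only $|\sin|\leqslant1$. Thus
\[
\frac{d}{dt}(2\phi_{t}^{\lambda})\leqslant-2\lambda(\sin\varepsilon)\mathbf{1}_{G_{i}}+2\lambda\mathbf{1}_{G_{i}^{c}}\qquad\text{a.a.\ }t\in[t_{i-1}^{n},\tau_{i}^{n}).
\]

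Integrating over $[t_{i-1}^{n},\tau_{i}^{n}]$ exactly as in Lemma \ref{lem:CatTim}, writing $\sigma_{i}^{n}=\tau_{i}^{n}-t_{i-1}^{n}$ and $\mu(G_{i}^{c}\cap[t_{i-1}^{n},\tau_{i}^{n}])\leqslant\mu((F_{1}\backslash F_{2})\cap I_{i}^{n})$ (using $G_{i}^{c}\cap I_{i}^{n}=(I_{i}^{n}\setminus F_{2})\supseteq$ nothing extra — precisely, within $F_{1}$ one has $I_{i}^{n}\setminus F_{2}=(F_{1}\setminus F_{2})\cap I_{i}^{n}$), yields
\[
2\lambda\sigma_{i}^{n}\sin\varepsilon\leqslant 2\phi_{t_{i-1}^{n}}^{\lambda}-2\phi_{\tau_{i}^{n}}^{\lambda}+2\lambda(1+\sin\varepsilon)\mu((F_{1}\backslash F_{2})\cap I_{i}^{n})\leqslant\pi+2\lambda(1+\sin\varepsilon)\mu((F_{1}\backslash F_{2})\cap I_{i}^{n}),
\]
the bound $2\phi_{t_{i-1}^{n}}^{\lambda}-2\phi_{\tau_{i}^{n}}^{\lambda}\leqslant b-a=\pi$ coming from both angles lying in $(a,b)$. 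Dividing by $2\lambda\sin\varepsilon$ gives the claimed estimate, and the case $\sigma_{i}^{n}=0$ (when $2\phi_{t_{i-1}^{n}}^{\lambda}$ already lies in the target) is trivially included. The main obstacle, as flagged, is not the integration but verifying that the sine stays bounded below by $\sin\varepsilon$ on the good set despite the degenerate total range $b-a=\pi$; this is resolved by the strict separation $a_{\delta}>a$, $b_{\delta}<b$ built into Definition \ref{def:Cusp}(ii) and the constraint \eqref{eq:ChoEps} on $\varepsilon$.
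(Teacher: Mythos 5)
Your proposal is correct and follows essentially the same route as the paper's proof. The paper applies the mechanism of Lemma \ref{lem:CatTim} on $I_i^n$ with good set $B_i^n=\{t\in I_i^n:\alpha_t\in[\alpha_i^n,\beta_i^n]\}$ and only at the end uses $F_2\cap I_i^n\subseteq B_i^n$ to pass to $\mu((F_1\setminus F_2)\cap I_i^n)$, whereas you work directly with $G_i=F_2\cap I_i^n$ as the good set; since $G_i\subseteq B_i^n$ this yields the same inequality, and the one genuine subtlety you correctly isolate and resolve — that $b-a=\pi$ so the sine is not automatically bounded below, fixed by the strict separation $\alpha_t\geqslant a_\delta>a$ on the good set together with $\varepsilon<a_\delta-a$ from \eqref{eq:ChoEps} — is exactly the point the paper makes when it bounds $2\phi_t^\lambda-\alpha_t\leqslant b-a_\delta$ and invokes \eqref{eq:ChoEps}.
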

\begin{proof}
We expect to apply Lemma \ref{lem:CatTim} to the context of 
\[
[a,b]=[a_{\delta},b_{\delta}],\ [c,d]=[\alpha_{i}^{n},\beta_{i}^{n}],\ [0,L]=I_{i}^{n}.
\]
However, the application is not entirely obvious, since we do not
know if $2\phi_{t_{i-1}^{n}}^{\lambda}\in[a_{\delta},b_{\delta}].$
The point is, we do know that $2\phi_{t}^{\lambda}\in[a,b]$ for all
$t$ (cf. Lemma \ref{lem:RangePhi}), and the previous proof of Lemma
\ref{lem:CatTim} remains valid under the requirement (\ref{eq:ChoEps}).
To elaborate this, we only consider the case when $2\phi_{t_{i-1}^{n}}^{\lambda}>\beta_{i}^{n}+\varepsilon$
as the other scenario is similar. We set
\[
B_{i}^{n}\triangleq\{t\in I_{i}^{n}:\alpha_{t}\in[\alpha_{i}^{n},\beta_{i}^{n}]\}.
\]
In the same way as in that proof, we have 
\[
\frac{d}{dt}(2\phi_{t}^{\lambda})\leqslant-2\lambda\sin(2\phi_{t}^{\lambda}-\alpha_{t}){\bf 1}_{B_{i}^{n}}+2\lambda{\bf 1}_{(B_{i}^{n})^{c}}
\]
for a.a. $t\in[t_{i-1}^{n},\tau_{i}^{n}].$ Note that $2\phi_{t}^{\lambda}\in[\beta_{i}^{n}+\varepsilon,b]$
on $[t_{i-1}^{n},\tau_{i}^{n}].$ Therefore, we have 
\[
\varepsilon\leqslant2\phi_{t}^{\lambda}-\alpha_{t}\leqslant b-\alpha_{i}^{n}\leqslant b-a_{\delta}\ \ \ {\rm on\ }B_{i}^{n}\cap[t_{i-1}^{n},\tau_{i}^{n}].
\]
By using the choice (\ref{eq:ChoEps}) of $\varepsilon$, we see that
\[
\sin(2\phi_{t}^{\lambda}-\alpha_{t})\geqslant\sin\varepsilon\ \ \ {\rm on\ }B_{i}^{n}\cap[t_{i-1}^{n},\tau_{i}^{n}].
\]
The rest of the argument is identical to the proof of Lemma \ref{lem:CatTim},
yielding the estimate 
\[
2\lambda\sigma_{i}^{n}\sin\varepsilon\leqslant2\phi_{t_{i-1}^{n}}^{\lambda}-2\phi_{\tau_{i}}^{\lambda}+2\lambda(1+\sin\varepsilon)\mu((B_{i}^{n})^{c}\cap I_{i}^{n}).
\]
Since $F_{2}\cap I_{i}^{n}\subseteq B_{i}^{n}$, we have 
\begin{align*}
2\lambda\sigma_{i}^{n}\sin\varepsilon & \leqslant2\phi_{t_{i-1}^{n}}^{\lambda}-2\phi_{\tau_{i}}^{\lambda}+2\lambda(1+\sin\varepsilon)\mu((F_{1}\backslash F_{2})\cap I_{i}^{n})\\
 & \leqslant b-a+2\lambda(1+\sin\varepsilon)\mu((F_{1}\backslash F_{2})\cap I_{i}^{n})\\
 & =\pi+2\lambda(1+\sin\varepsilon)\mu((F_{1}\backslash F_{2})\cap I_{i}^{n}).
\end{align*}
Rearranging the inequality gives the desired estimate.
\end{proof}

\subsubsection{Step three: the global estimate}

According to the intermediate lower estimate given by Lemma \ref{lem:IntLow},
our task is to estimate the integral 
\[
I_{\lambda}\triangleq\int_{0}^{L}\cos(2\phi_{t}^{\lambda}-\alpha_{t})dt
\]
from below when $\lambda$ is large. For this purpose, we first write
\begin{align*}
I_{\lambda} & =\int_{F_{2}^{c}}\cos(2\phi_{t}^{\lambda}-\alpha_{t})dt+\int_{F_{2}}\cos(2\phi_{t}^{\lambda}-\alpha_{t})dt\\
 & \geqslant-\mu(F_{1}^{c})-\mu(F_{1}\backslash F_{2})+\sum_{i=1}^{n}\int_{F_{2}\cap I_{i}^{n}}\cos(2\phi_{t}^{\lambda}-\alpha_{t})dt\\
 & \geqslant-\delta-\eta+\sum_{i=1}^{n}\int_{F_{2}\cap I_{i}^{n}}\cos(2\phi_{t}^{\lambda}-\alpha_{t})dt.
\end{align*}
To analyse the summation on the right hand side, we decompose it as
\[
\sum_{i=1}^{n}\int_{F_{2}\cap I_{i}^{n}}\cos(2\phi_{t}^{\lambda}-\alpha_{t})dt=J_{n}+K_{n},
\]
where 
\[
J_{n}\triangleq\sum_{i=1}^{n}\int_{F_{2}\cap[t_{i-1}^{n},\tau_{i}^{n}]}\cos(2\phi_{t}^{\lambda}-\alpha_{t})dt
\]
and 
\[
K_{n}\triangleq\sum_{i=1}^{n}\int_{F_{2}\cap[\tau_{i}^{n},t_{i}^{n}]}\cos(2\phi_{t}^{\lambda}-\alpha_{t})dt
\]
respectively.

For the term $J_{n},$ according to Lemma \ref{lem:EntryTimeEst}
and the choice (\ref{eq:ChoPar}) of $n$, we have 
\begin{align}
J_{n} & \geqslant-\sum_{i=1}^{n}\mu(F_{2}\cap[t_{i-1}^{n},\tau_{i}^{n}])\geqslant-\sum_{i=1}^{n}\sigma_{i}^{n}\nonumber \\
 & \geqslant-\frac{\pi n}{2\lambda\sin\varepsilon}-\frac{1+\sin\varepsilon}{\sin\varepsilon}\sum_{i=1}^{n}\mu((F_{1}\backslash F_{2})\cap I_{i}^{n})\nonumber \\
 & \geqslant-\frac{\pi\varepsilon}{2}-\frac{(1+\sin\varepsilon)\eta}{\sin\varepsilon}.\label{eq:WaitTime}
\end{align}

For the term $K_{n},$ we introduce an extra independent parameter
$M>0$. Define ${\cal B}_{n}$ to be the collection of those $i$'s
such that

\[
\mu((F_{1}\backslash F_{2})\cap I_{i}^{n})>\frac{M}{n}\eta,
\]
and set ${\cal G}_{n}\triangleq{\cal B}_{n}^{c}$. Then we have 
\[
\eta>\mu(F_{1}\backslash F_{2})=\sum_{i=1}^{n}\mu(((F_{1}\backslash F_{2}))\cap I_{i}^{n})\geqslant|{\cal B}_{n}|\times\frac{M\eta}{n},
\]
where $|{\cal B}_{n}|$ denotes the number of elements in ${\cal B}_{n}.$
In particular, $|{\cal B}_{n}|\leqslant\frac{n}{M}.$ It follows that
\begin{align}
K_{n} & =\big(\sum_{i\in{\cal B}_{n}}+\sum_{i\in{\cal G}_{n}}\big)\int_{F_{2}\cap[\tau_{i}^{n},t_{i}^{n}]}\cos(2\phi_{t}^{\lambda}-\alpha_{t})dt\nonumber \\
 & \geqslant-\sum_{i\in{\cal B}_{n}}\mu(F_{2}\cap[\tau_{i}^{n},t_{i}^{n}])+\sum_{i\in{\cal G}_{n}}\int_{F_{2}\cap[\tau_{i}^{n},t_{i}^{n}]}\cos(2\phi_{t}^{\lambda}-\alpha_{t})dt\nonumber \\
 & \geqslant-\frac{L}{n}\times|{\cal B}_{n}|+\sum_{i\in{\cal G}_{n}}\int_{F_{2}\cap[\tau_{i}^{n},t_{i}^{n}]}\cos(2\phi_{t}^{\lambda}-\alpha_{t})dt\nonumber \\
 & \geqslant-\frac{L}{M}+\sum_{i\in{\cal G}_{n}}\int_{F_{2}\cap[\tau_{i}^{n},t_{i}^{n}]}\cos(2\phi_{t}^{\lambda}-\alpha_{t})dt.\label{eq:K_nEst}
\end{align}

We now estimate the last term on the right hand side of (\ref{eq:K_nEst}).
The point is to apply Lemma \ref{lem:DeviPhi} to the context where
\[
[0,L]=[\tau_{i}^{n},t_{i}^{n}],\ [a,b]=[\alpha_{i}^{n}-\varepsilon,\beta_{i}^{n}+\varepsilon],
\]
and 
\[
r=r_{i}^{n}\triangleq2\lambda\cdot\mu\big(\{t:\alpha_{t}\notin[\alpha_{i}^{n}-\varepsilon,\beta_{i}^{n}+\varepsilon]\}\cap[\tau_{i}^{n},t_{i}^{n}]\big).
\]
As one assumption in the lemma, we already have $2\phi_{\tau_{i}^{n}}^{\lambda}\in[\alpha_{i}^{n}-\varepsilon,\beta_{i}^{n}+\varepsilon].$
We must also verify the other standing assumption that 
\begin{equation}
(\beta_{i}^{n}+\varepsilon)-(\alpha_{i}^{n}-\varepsilon)+r_{i}^{n}<\pi.\label{eq:rReq}
\end{equation}
To this end, first note that 
\[
r_{i}^{n}\leqslant2\lambda\mu((F_{1}\backslash F_{2})\cap I_{i}^{n}).
\]
Furthermore, for those $i\in{\cal G}_{n}$, we have 
\[
\mu((F_{1}\backslash F_{2})\cap I_{i}^{n})\leqslant\frac{M\eta}{n}.
\]
As a result, 
\[
r_{i}^{n}\leqslant\frac{2\lambda M\eta}{n}=\frac{2M\eta}{\varepsilon\sin\varepsilon}.
\]
It follows from (\ref{eq:CtyEst}) that
\[
(\beta_{i}^{n}+\varepsilon)-(\alpha_{i}^{n}-\varepsilon)+r_{i}^{n}<3\varepsilon+\frac{2M\eta}{\varepsilon\sin\varepsilon}
\]
for each $i\in{\cal G}_{n}.$ For fixed $\varepsilon$ and $M$, when
$\eta$ is small we can ensure that the condition (\ref{eq:rReq})
is met. We emphasise that such a requirement is legal in view of the
limiting order (\ref{eq:LimOrd}) that will be implemented eventually.
Now we can apply Lemma \ref{lem:DeviPhi} to conclude that 
\[
2\phi_{t}^{\lambda}\in[\alpha_{i}^{n}-\varepsilon-r_{i}^{n},\beta_{i}^{n}+\varepsilon+r_{i}^{n}]\ \ \ \forall t\in[\tau_{i}^{n},t_{i}^{n}].
\]
Consequently, for each $i\in{\cal G}_{n}$ and $t\in F_{2}\cap[\tau_{i}^{n},t_{i}^{n}]$,
we have 
\begin{equation}
|2\phi_{t}^{\lambda}-\alpha_{t}|\leqslant\beta_{i}^{n}-\alpha_{i}^{n}+\varepsilon+r_{i}^{n}<2\varepsilon+\frac{2M\eta}{\varepsilon\sin\varepsilon}.\label{eq:GetBadSetEst}
\end{equation}
For fixed $\varepsilon$ and $M,$ we further require $\eta$ to be
small enough so that $2\varepsilon+\frac{2M\eta}{\varepsilon\sin\varepsilon}<\frac{\pi}{2}$.
As a consequence, we obtain 
\begin{align*}
 & \sum_{i\in{\cal G}_{n}}\int_{F_{2}\cap[\tau_{i}^{n},t_{i}^{n}]}\cos(2\phi_{t}^{\lambda}-\alpha_{t})dt\\
 & \geqslant\cos\big(2\varepsilon+\frac{2M\eta}{\varepsilon\sin\varepsilon}\big)\cdot\sum_{i\in{\cal G}_{n}}\mu(F_{2}\cap[\tau_{i}^{n},t_{i}^{n}])\\
 & =\cos\big(2\varepsilon+\frac{2M\eta}{\varepsilon\sin\varepsilon}\big)\cdot\sum_{i\in{\cal G}_{n}}\big(\mu(F_{2}\cap I_{i}^{n})-\mu(F_{2}\cap[t_{i-1}^{n},\tau_{i}^{n}])\big)\\
 & =\cos\big(2\varepsilon+\frac{2M\eta}{\varepsilon\sin\varepsilon}\big)\cdot\mu(F_{2})-\cos\big(2\varepsilon+\frac{2M\eta}{\varepsilon\sin\varepsilon}\big)\cdot\sum_{i\in{\cal B}_{n}}\mu(F_{2}\cap I_{i}^{n})\\
 & \ \ \ -\cos\big(2\varepsilon+\frac{2M\eta}{\varepsilon\sin\varepsilon}\big)\cdot\sum_{i=1}^{n}\mu(F_{2}\cap[t_{i-1}^{n},\tau_{i}^{n}])\\
 & \geqslant\cos\big(2\varepsilon+\frac{2M\eta}{\varepsilon\sin\varepsilon}\big)\cdot(L-\delta-\eta)-\frac{L}{M}\cos\big(2\varepsilon+\frac{2M\eta}{\varepsilon\sin\varepsilon}\big)\\
 & \ \ \ -\cos\big(2\varepsilon+\frac{2M\eta}{\varepsilon\sin\varepsilon}\big)\cdot\big(\frac{\pi\varepsilon}{2}+\frac{(1+\sin\varepsilon)\eta}{\sin\varepsilon}\big).
\end{align*}
To reach the second term in the last inequality, we have used the
fact that 
\begin{equation}
\sum_{i\in{\cal B}_{n}}\mu(F_{2}\cap I_{i}^{n})\leqslant\frac{L}{n}\times|{\cal B}_{n}|\leqslant\frac{L}{M},\label{eq:BadSum}
\end{equation}
and to reach the third term we have used the estimate (\ref{eq:WaitTime}).

Gathering all the above estimates we have obtained so far, we arrive
at 
\begin{align}
I_{\lambda} & \geqslant-\delta-\eta-\frac{\pi\varepsilon}{2}-\frac{(1+\sin\varepsilon)\eta}{\sin\varepsilon}-\frac{L}{M}+\cos\big(2\varepsilon+\frac{2M\eta}{\varepsilon\sin\varepsilon}\big)\cdot(L-\delta-\eta)\nonumber \\
 & \ \ \ -\frac{L}{M}\cos\big(2\varepsilon+\frac{2M\eta}{\varepsilon\sin\varepsilon}\big)-\cos\big(2\varepsilon+\frac{2M\eta}{\varepsilon\sin\varepsilon}\big)\cdot\big(\frac{\pi\varepsilon}{2}+\frac{(1+\sin\varepsilon)\eta}{\sin\varepsilon}\big).\label{eq:UniformLowerBound}
\end{align}
The proof of Theorem \ref{thm:LConGlo} is thus completed by passing
to the limit in the order specified by (\ref{eq:LimOrd}).

\medskip The following estimate is a direct consequence of the above
argument. It plays an essential role for proving Theorem \ref{thm:Mthm}
in the more general context in the next subsection. We continue to
use the same notation as before and to make all the standing requirements
for the parameters $\delta,\eta,\varepsilon,M.$ However, we do not
take limit for these parameters.
\begin{cor}
\label{cor:BadSetEst}There exists $\Lambda=\Lambda(\delta,\eta,\varepsilon,M),$
such that whenever $\lambda>\Lambda$ and $s\in[0,L]$ satisfies $2\phi_{s}^{\lambda}\in(a,b),$
we have 
\begin{equation}
\mu\big(\big\{ t\in F_{2}\cap[s,L]:|2\phi_{t}^{\lambda}-\alpha_{t}|>2\varepsilon+\frac{2M\eta}{\varepsilon\sin\varepsilon}\big\}\big)\leqslant\frac{\pi\varepsilon}{2}+\frac{1+\sin\varepsilon}{\sin\varepsilon}\eta+\frac{L}{M}.\label{eq:BadSetEst}
\end{equation}
\end{cor}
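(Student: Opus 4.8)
The plan is to recognise that the estimate (\ref{eq:BadSetEst}) is exactly the intermediate content extracted from the proof of Theorem \ref{thm:LConGlo}, now read off with the starting time $0$ replaced by an arbitrary $s$ at which $2\phi_s^\lambda\in(a,b)$. The hypothesis $2\phi_0^\lambda\in(a,b)$ entered that argument only through Lemma \ref{lem:RangePhi}, which confined $2\phi_t^\lambda$ to $(a,b)$ for all $t$; under the present hypothesis, applying Lemma \ref{lem:RangePhi} to the restriction $\alpha|_{[s,L]}$ confines $2\phi_t^\lambda\in(a,b)$ for all $t\in[s,L]$, so Lemmas \ref{lem:EntryTimeEst} and \ref{lem:DeviPhi} remain available on $[s,L]$. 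I would therefore keep all the data $F_1,F_2,\varepsilon,\rho,n,\{I_i^n\},\alpha_i^n,\beta_i^n,{\cal B}_{n},{\cal G}_{n}$ fixed exactly as in Section \ref{subsec:Glo}, and rerun the three-step analysis over $[s,L]$.

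First I would decompose $F_2\cap[s,L]$ by means of the partition ${\cal P}_{n}$: since $F_2\subseteq F_1$ and the components of $F_1$ are disjoint closed intervals, $F_2\cap[s,L]$ is covered by those $I_i^n$ lying entirely in $[s,L]$ together with at most one truncated piece $[s,t_{i_0}^n]$, namely the unique sub-interval whose interior contains $s$ (if it exists). On a full sub-interval the entry-time bound of Lemma \ref{lem:EntryTimeEst} applies unchanged; on the truncated piece one measures the entry time from $s$, and the proof of Lemma \ref{lem:EntryTimeEst} carries over verbatim with $t_{i_0-1}^n$ replaced by $s$, since $\alpha_{i_0}^n,\beta_{i_0}^n$ still bound $\alpha_t$ on $F_2\cap[s,t_{i_0}^n]\subseteq F_2\cap I_{i_0}^n$ and $[s,t_{i_0}^n]\subseteq F_1$; this yields the same bound with $\mu((F_1\setminus F_2)\cap[s,t_{i_0}^n])$ in place of $\mu((F_1\setminus F_2)\cap I_{i_0}^n)$. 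Summing the entry-time bounds over all sub-intervals meeting $[s,L]$ then gives, exactly as in (\ref{eq:WaitTime}), a total waiting measure bounded by $\frac{\pi n}{2\lambda\sin\varepsilon}+\frac{1+\sin\varepsilon}{\sin\varepsilon}\mu(F_1\setminus F_2)\leqslant\frac{\pi\varepsilon}{2}+\frac{1+\sin\varepsilon}{\sin\varepsilon}\eta$ once $\lambda$ is large, the $O(1/\lambda)$ discrepancy between $n/\lambda$ and $c=\varepsilon\sin\varepsilon$ (coming from $n=[c\lambda]+1$) being absorbed into the threshold $\Lambda$, which in any case must be at least as large as in Section \ref{subsec:Glo} so that ${\rm mesh}\,{\cal P}_{n}<\rho$.

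Next I would identify the bad set. For $i\in{\cal G}_{n}$ and $t\in F_2\cap[\tau_i^n,t_i^n]$ (with the entry time taken from $s$ on the truncated piece) the deviation estimate of Lemma \ref{lem:DeviPhi}, applied precisely as in the derivation of (\ref{eq:GetBadSetEst}) and with $r_i^n\leqslant\frac{2M\eta}{\varepsilon\sin\varepsilon}$, yields $|2\phi_t^\lambda-\alpha_t|<2\varepsilon+\frac{2M\eta}{\varepsilon\sin\varepsilon}$; this computation uses only the length of the enclosing interval $[\alpha_i^n-\varepsilon,\beta_i^n+\varepsilon]$ and the condition (\ref{eq:rReq}), so it is insensitive to the position of the sub-interval along $[0,L]$ and unaffected by the truncation. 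Hence $\{t\in F_2\cap[s,L]:|2\phi_t^\lambda-\alpha_t|>2\varepsilon+\frac{2M\eta}{\varepsilon\sin\varepsilon}\}$ is contained in the union of the waiting periods over all relevant $i$ together with $\bigcup_{i\in{\cal B}_{n}}(F_2\cap I_i^n)$; the former has measure at most $\frac{\pi\varepsilon}{2}+\frac{1+\sin\varepsilon}{\sin\varepsilon}\eta$ by the previous paragraph, and the latter at most $\frac{L}{n}|{\cal B}_{n}|\leqslant\frac{L}{M}$ as in (\ref{eq:BadSum}). Adding the two gives (\ref{eq:BadSetEst}).

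I do not anticipate a serious obstacle: this is essentially a repackaging of the proof of Theorem \ref{thm:LConGlo}, and the only genuinely new bookkeeping is the single partially covered sub-interval $[s,t_{i_0}^n]$, together with the remark that neither the entry-time nor the deviation estimate depends on where a sub-interval sits in $[0,L]$. The one point demanding care is the threshold $\Lambda=\Lambda(\delta,\eta,\varepsilon,M)$: it must be large enough that ${\rm mesh}\,{\cal P}_{n}<\rho$, that the inequalities (\ref{eq:rReq}) and $2\varepsilon+\frac{2M\eta}{\varepsilon\sin\varepsilon}<\frac{\pi}{2}$ hold (already arranged in Section \ref{subsec:Glo}, depending only on $\varepsilon,M,\eta$), and that the $n/\lambda\to c$ correction in the waiting-time sum is negligible; crucially $\Lambda$ is independent of $s$, which is precisely what makes (\ref{eq:BadSetEst}) uniform in the starting point.
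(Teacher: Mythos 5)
Your proposal is correct and follows essentially the same route as the paper's own proof: take $s$ as the new initial time, observe that Lemma~\ref{lem:RangePhi} applied on $[s,L]$ still confines $2\phi_t^\lambda$ to $(a,b)$, keep the partition and all parameters from Section~\ref{subsec:Glo} fixed, and note (as the paper does) that the bad set is contained in the union of the waiting periods $[t_{i-1}^n,\tau_i^n]$ over $i\in\mathcal{G}_n$ together with $\bigcup_{i\in\mathcal{B}_n}(F_2\cap I_i^n)$, so that (\ref{eq:BadSetEst}) follows from (\ref{eq:WaitTime}) and (\ref{eq:BadSum}). The paper's proof does not spell out the bookkeeping for the single truncated sub-interval $[s,t_{i_0}^n]$, which you make explicit and handle correctly; your remark about absorbing the $O(1/\lambda)$ slack from $n=[c\lambda]+1$ is in the same spirit as the paper's own (slightly loose) final inequality in (\ref{eq:WaitTime}), so the two proofs match step for step.
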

\begin{proof}
The number $\Lambda$ is chosen so that for any $\lambda>\Lambda$,
we have $\beta_{i}^{n}-\alpha_{i}^{n}<\varepsilon$ (see the discussion
in Section \ref{subsec:S1} leading to the property (\ref{eq:CtyEst})).
Recall that $\varepsilon$ depends on $\delta,$ and $\eta$ is small
depending on $\varepsilon$ and $M$. Hence $\Lambda$ depends on
all these parameters. Suppose that $2\phi_{s}^{\lambda}\in(a,b).$
We treat $2\phi_{s}^{\lambda}$ as the initial condition and restrict
the previous analysis to the interval $[s,L].$ The argument leading
to (\ref{eq:GetBadSetEst}) implies that
\[
\big\{ t\in F_{2}\cap[s,L]:|2\phi_{t}^{\lambda}-\alpha_{t}|>2\varepsilon+\frac{2M\eta}{\varepsilon\sin\varepsilon}\big\}\subseteq\big(\cup_{i\in{\cal B}_{n}}(F_{2}\cap I_{i}^{n})\big)\cup\big(\cup_{i\in{\cal G}_{n}}[t_{i-1}^{n},\tau_{i}^{n}]\big).
\]
The inequality (\ref{eq:BadSetEst}) then follows from (\ref{eq:BadSum})
and (\ref{eq:WaitTime}).
\end{proof}

\subsection{Proof of Theorem \ref{thm:Mthm}: the local case}

We now proceed to develop the proof of Theorem \ref{thm:Mthm} in
general. Suppose that $\gamma:[0,L]\rightarrow\mathbb{R}^{2}$ is
strongly tree-reduced in the sense of Definition \ref{def:SRed}.
Our strategy is to cover the path by small intervals, so that on each
local interval the corresponding estimate (\ref{eq:BadSetEst}) holds.
There is a key missing ingredient in order to patch the estimates
(\ref{eq:BadSetEst}) over different intervals. We must make sure
that the initial condition for $2\phi_{s}^{\lambda}$ on each of the
covering intervals falls in an appropriate region $(a,b)$ to trigger
the relevant estimate. Obtaining such consistency property is non-trivial,
since the initial condition for the current covering is the terminal
condition for the previous covering.

To reduce technical considerations, let us first make one simplification
by assuming that, in Definition \ref{def:SRed} the open interval
$(0,L)$ is replaced by the closed interval $[0,L].$ Namely, we assume
that for each $t\in[0,L],$ there exists a neighbourhood $(u_{t},v_{t})$
of $t$ such that $\gamma|_{[u_{t},v_{t}]\cap[0,L]}$ is a regular
cusp. This simplification allows us to make use of compactness and
finite covers. At the end of this subsection, we discuss how to remove
this restriction (cf. Section \ref{subsec:EndPtIss}). 

\subsubsection{Step one: a covering lemma}

To make the intuition clearer, it is important to choose a nice covering
of $[0,L]$ out of the above assumption. This is the content of the
following lemma.
\begin{lem}
\label{lem:Cov}There exist points $u_{1},\cdots,u_{k-1}$, $v_{1},\cdots,v_{k},$
such that

\vspace{2mm} \noindent (i) $[0,L]=[v_{0},v_{1}]\cup[v_{1},v_{2}]\cup\cdots\cup[v_{k-1},v_{k}]$
where $v_{0}\triangleq0$;\\
(ii) $u_{i}\in(v_{i-1},v_{i})$ for each $1\leqslant i\leqslant k-1$;
\\
(iii) $\alpha|_{[u_{i-1},v_{i}]}$ is a regular cusp for each $1\leqslant i\leqslant k$
where $u_{0}\triangleq0$.
\end{lem}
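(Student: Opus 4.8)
The plan is a routine compactness argument preceded by one elementary stability observation. \textbf{Step 0.} I would first record that the restriction of a regular cusp to any closed subinterval is again a regular cusp. Indeed, suppose $\alpha|_{[p,q]}$ satisfies Definition~\ref{def:Cusp} with constant $a$ in part~(i) and, for each $\delta>0$, a set $F=F_\delta$ (a finite disjoint union of closed intervals) together with constants $a_\delta>a$, $b_\delta<a+\pi$ in part~(ii). For $[p',q']\subseteq[p,q]$ the same $a$ serves for part~(i), while for part~(ii) one takes $F\cap[p',q']$ — still a finite disjoint union of closed intervals after discarding finitely many degenerate pieces — with the same $a_\delta,b_\delta$; since $[p',q']\setminus(F\cap[p',q'])\subseteq[p,q]\setminus F$ the complementary measure is no larger. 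So the regular-cusp property is inherited by every subinterval.

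\textbf{Step 1.} Under the simplified hypothesis (the closed-interval version of Definition~\ref{def:SRed}), the intervals $\{(u_t,v_t):t\in[0,L]\}$ form an open cover of the compact set $[0,L]$. By the Lebesgue number lemma there is $\rho>0$ such that every subinterval of $[0,L]$ of length less than $\rho$ lies inside some $(u_t,v_t)$; by Step~0, $\alpha$ restricted to any such subinterval is then a regular cusp.

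\textbf{Step 2.} I would exhibit the points explicitly. Choose $k\in\mathbb{N}$ with $3L/k<\rho$ and set $v_j\triangleq jL/k$ for $0\leqslant j\leqslant k$ and $u_j\triangleq v_j-L/(2k)$ for $1\leqslant j\leqslant k-1$, with the conventions $v_0=u_0=0$ (so that $v_k=L$). Property~(i) is immediate from $0=v_0<v_1<\cdots<v_k=L$, and property~(ii) holds because $v_{j-1}=v_j-L/k<v_j-L/(2k)=u_j<v_j$. For property~(iii), the interval $[u_{i-1},v_i]$ has length $v_i-u_{i-1}\leqslant L/k+L/(2k)<\rho$ (the case $i=1$ giving length $L/k$), hence lies in some $(u_t,v_t)\cap[0,L]$, so $\alpha|_{[u_{i-1},v_i]}$ is a regular cusp by Steps~0--1.

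The argument is short; the one point that needs genuine attention is Step~0 — checking that the truncated data $F\cap[p',q']$ still satisfy Definition~\ref{def:Cusp}, so that the regular-cusp property really does propagate to all small subintervals. Everything else is bookkeeping: the explicit formulas for $v_j$ and $u_j$ enforce the overlap condition $u_j\in(v_{j-1},v_j)$ while keeping each chain interval $[u_{i-1},v_i]$ short enough to sit inside a single member of the cover.
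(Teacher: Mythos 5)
Your proof is correct and takes a genuinely different route from the paper's. The paper extracts a finite subcover $\{I_i\}$ directly by compactness and then builds the chain $v_0<v_1<\cdots<v_k$ by a greedy induction: at each stage $v_{i+1}$ is the right endpoint of a so-far-unused cover member containing $v_i$, and $u_i$ is placed in the overlap $(\max\{u_i',v_{i-1}\},v_i)$. Your version instead invokes the Lebesgue number lemma, after which a uniform partition $v_j=jL/k$ with the fixed half-mesh shift $u_j=v_j-L/(2k)$ does the job; this buys an explicit, calculation-free construction and eliminates the need to argue that the greedy process terminates and that $v_i$ avoids the intervals already used, at the cost of calling on a slightly heavier compactness tool. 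Both approaches ultimately rest on the observation that the regular-cusp property in Definition~\ref{def:Cusp} is inherited by closed subintervals: the paper uses this implicitly when asserting that $\alpha|_{[u_{i-1},v_i]}$ --- a proper subinterval of $\overline{I_i}\cap[0,L]$ --- is a regular cusp, whereas your Step~0 isolates and checks it (same $a$, intersect $F_\delta$ with the subinterval, note the complement can only shrink, discard degenerate components). Making that step explicit is a genuine improvement in rigor, and the rest of your argument is sound.
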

\begin{proof}
By compactness, we can find a finite family ${\cal A}=\{I_{i}:i=1,\cdots,l\}$
of distinct intervals that cover $[0,L]$, where each interval $I_{i}$
is relatively open in $[0,L]$ and $\alpha|_{\overline{I_{i}}\cap[0,L]}$
is a regular cusp for each $i$. The point $t=0$ is covered by some
member in ${\cal A},$ say $I_{1}=(0,v_{1}).$ If $v_{1}\geqslant L$,
we set $v_{1}\triangleq L$ and we are done. Otherwise, the point
$t=v_{1}$ is covered by some member in ${\cal A}\backslash\{I_{1}\},$
say $I_{2}=(u'_{1},v_{2}).$ If $v_{2}\geqslant L,$ we are done by
setting $v_{2}\triangleq L$ and choosing any point $u_{1}\in(u_{1}',v_{1}).$
If $v_{2}<L$, we continue the process. Inductively, $v_{i}$ is covered
by some member in ${\cal A}\backslash\{I_{1},\cdots,I_{i}\}$, say
$I_{i+1}=(u_{i}',v_{i+1}).$ We choose $u_{i}\in(\max\{u_{i}',v_{i-1}\},v_{i})$
and proceed further. The process terminates after finitely many steps
since ${\cal A}$ is finite.
\end{proof}
The figure below illustrates the covering specified by Lemma \ref{lem:Cov}
when $k=4.$ \begin{figure}[H]
\begin{center}  
\includegraphics[scale=0.32]{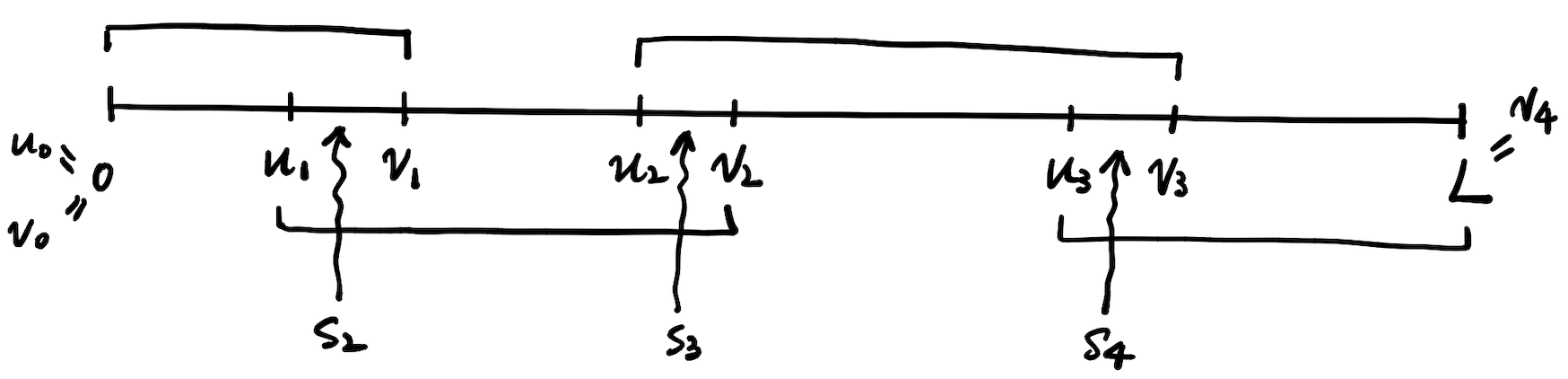} 
\protect\caption{The covering structure when $k=4$.}\label{fig:Cover} 
\end{center} 
\end{figure}\noindent In what follows, we always work with a fixed covering structure
given by Lemma \ref{lem:Cov}.

\subsubsection{Step two: consistency of initial conditions}

From Lemma \ref{lem:Cov}, we know that $\alpha|_{[u_{i-1},v_{i}]}$
is a regular cusp. In particular, we know by assumption that 
\[
\alpha_{t}\in[a_{i},a_{i}+\pi]\ \ \ {\rm for\ a.a.\ }t\in[u_{i-1},v_{i}]
\]
with some $a_{i}\in\mathbb{R}$. The main issue here is that, we cannot
directly apply the results from Section \ref{subsec:Glo}, since we
do not know whether $2\phi_{u_{i-1}}^{\lambda}\in(a_{i},a_{i}+\pi).$
Such a requirement on the initial condition is critical in the previous
argument. Nonetheless, the following lemma tells us that we can find
$s_{i}\in[u_{i-1},v_{i-1})$ (which may depend on $\lambda$) such
that $2\phi_{s_{i}}^{\lambda}\in(a_{i},a_{i}+\pi)$. As a result,
$s_{i}$ can be treated as the initial time over the portion of $[s_{i},v_{i}]$.
\begin{lem}
There exists $\Lambda>0,$ such that for any $\lambda>\Lambda$ and
$1\leqslant i\leqslant k-1,$ we can always find $s_{i}\in[u_{i-1},v_{i-1})$
satisfying 
\[
2\phi_{s_{i}}^{\lambda}\in(a_{i},a_{i}+\pi).
\]
\end{lem}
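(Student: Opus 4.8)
The plan is to argue by induction on $i$. For $i=1$ we simply set $s_{1}:=0$: since the initial angle $\phi_{0}^{\lambda}$ is at our disposal (Lemma \ref{lem:IntLow}), we may arrange $2\phi_{0}^{\lambda}\in(a_{1},a_{1}+\pi)$. So fix $2\leqslant i\leqslant k-1$ and assume that $s_{i-1}\in[u_{i-2},v_{i-2})$ with $2\phi_{s_{i-1}}^{\lambda}\in(a_{i-1},a_{i-1}+\pi)$ has already been produced. Since $\alpha|_{[u_{i-2},v_{i-1}]}$ is a regular cusp (Lemma \ref{lem:Cov}(iii)), we have $\alpha_{t}\in[a_{i-1},a_{i-1}+\pi]$ for a.a. $t\in[s_{i-1},v_{i-1}]$, so Lemma \ref{lem:RangePhi}, applied with $s_{i-1}$ as starting time, gives $2\phi_{t}^{\lambda}\in(a_{i-1},a_{i-1}+\pi)$ for all $t\in[s_{i-1},v_{i-1}]$; in particular this holds on $J:=[u_{i-1},v_{i-1}]$. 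If $a_{i}=a_{i-1}$ we are done with $s_{i}:=u_{i-1}$ (which lies in $[u_{i-1},v_{i-1})$ by Lemma \ref{lem:Cov}(ii)). So assume $a_{i}\neq a_{i-1}$; by symmetry we treat the case $a_{i}>a_{i-1}$.

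The key is that $J$ lies in \emph{both} $[u_{i-1},v_{i}]$ and $[u_{i-2},v_{i-1}]$, so on $J$ the angle satisfies $\alpha_{t}\in[a_{i-1},a_{i-1}+\pi]$ and $\alpha_{t}\in[a_{i},a_{i}+\pi]$, both a.e. As $\alpha$ lands in the intersection on a full-measure subset of $J$ (a set of positive length), that intersection is nonempty, and Definition \ref{def:Cusp}(ii) rules out the degenerate possibility $a_{i}-a_{i-1}=\pi$; hence $0<a_{i}-a_{i-1}<\pi$ and $\alpha_{t}\in[a_{i},a_{i-1}+\pi]$ a.e. on $J$ --- an interval of length $<\pi$. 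Suppose, towards a contradiction, that $2\phi_{t}^{\lambda}\notin(a_{i},a_{i}+\pi)$ for every $t\in J$. Combined with $2\phi_{t}^{\lambda}\in(a_{i-1},a_{i-1}+\pi)$ this forces $2\phi_{t}^{\lambda}\in(a_{i-1},a_{i}]$ for all $t\in J$, so for a.a. $t\in J$ we have $2\phi_{t}^{\lambda}-\alpha_{t}\in(-\pi,0]$, and therefore by \eqref{eq:AEqn} $\tfrac{d}{dt}(2\phi_{t}^{\lambda})=-2\lambda\sin(2\phi_{t}^{\lambda}-\alpha_{t})\geqslant0$: the path $2\phi^{\lambda}$ is non-decreasing on $J$.

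To manufacture a \emph{definite} increase, fix $\delta,\delta'>0$ with $\delta+\delta'<\tfrac12\mu(J)$ and apply Definition \ref{def:Cusp}(ii) to the regular cusps $\alpha|_{[u_{i-1},v_{i}]}$ and $\alpha|_{[u_{i-2},v_{i-1}]}$ with parameters $\delta$ and $\delta'$; intersecting the two resulting ``good'' sets with $J$ yields a measurable $E\subseteq J$ with $\mu(J\setminus E)<\tfrac12\mu(J)$ together with numbers $a^{*}>a_{i}$, $b^{*}<a_{i-1}+\pi$, $a^{*}\leqslant b^{*}$, such that $\alpha_{t}\in[a^{*},b^{*}]$ for a.a. $t\in E$. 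On $E$ we then have $a_{i-1}-b^{*}<2\phi_{t}^{\lambda}-\alpha_{t}\leqslant a_{i}-a^{*}<0$, which keeps $2\phi_{t}^{\lambda}-\alpha_{t}$ bounded away from $-\pi$ and from $0$, so $-\sin(2\phi_{t}^{\lambda}-\alpha_{t})\geqslant\kappa:=\min\{\sin(a^{*}-a_{i}),\sin(b^{*}-a_{i-1})\}>0$, i.e. $\tfrac{d}{dt}(2\phi_{t}^{\lambda})\geqslant2\lambda\kappa$ for a.a. $t\in E$. Integrating the (nonnegative) derivative over $J$ and using $2\phi_{u_{i-1}}^{\lambda}>a_{i-1}$, $2\phi_{v_{i-1}}^{\lambda}\leqslant a_{i}$, $\mu(E)>\tfrac12\mu(J)$ gives
\[
a_{i}-a_{i-1}\ \geqslant\ 2\phi_{v_{i-1}}^{\lambda}-2\phi_{u_{i-1}}^{\lambda}\ \geqslant\ 2\lambda\kappa\,\mu(E)\ >\ \lambda\kappa\,\mu(J),
\]
which is false once $\lambda>\Lambda_{i}:=(a_{i}-a_{i-1})/(\kappa\,\mu(J))$. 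Hence for such $\lambda$ there is $t_{0}\in J$ with $2\phi_{t_{0}}^{\lambda}\in(a_{i},a_{i}+\pi)$, and since this set is open we may take $t_{0}<v_{i-1}$; put $s_{i}:=t_{0}$. Finally $\Lambda:=\max_{2\leqslant i\leqslant k-1}\Lambda_{i}$ works (steps with $a_{i}=a_{i-1}$ impose nothing, and each $\Lambda_{i}$ is independent of $\lambda$), closing the induction. The genuinely delicate point --- exactly the ``consistency of initial conditions'' flagged before the lemma --- is that one must first confine $2\phi^{\lambda}$ to $(a_{i-1},a_{i-1}+\pi)$ all the way out to $u_{i-1}$ before the local analysis on $J$ can even begin, and this is precisely what the inductive hypothesis together with the confinement Lemma \ref{lem:RangePhi} supplies; the regular-cusp hypothesis (as opposed to mere boundedness of $\alpha$) enters only to guarantee $\kappa>0$, i.e. to place $E$ strictly inside $(a_{i},a_{i}+\pi)$.
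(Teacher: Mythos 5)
Your proposal is correct, and it takes a genuinely different route from the paper. The paper proves the lemma by invoking the quantitative estimate of Corollary \ref{cor:BadSetEst}: after fixing parameters $\delta,\varepsilon,\eta,M$ subject to the constraints \eqref{eq:1Choice}--\eqref{eq:2Choice}, they show the ``good set'' $C_{i}$ (where $2\phi_{t}^{\lambda}$ is close to $\alpha_{t}$) has measure exceeding $u_{i}-s_{i}+\delta+\eta$, so that $C_{i}\cap F_{i+1}'\cap[u_{i},v_{i})\neq\emptyset$, and they take $s_{i+1}$ there; the membership $s_{i+1}\in F_{i+1}'$ plus the smallness of $|2\phi_{s_{i+1}}^{\lambda}-\alpha_{s_{i+1}}|$ then forces $2\phi_{s_{i+1}}^{\lambda}\in(a_{i+1},b_{i+1})$ via \eqref{eq:1Choice}. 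You instead argue by contradiction purely at the ODE level: after confining $2\phi^{\lambda}$ to $(a_{i-1},a_{i-1}+\pi)$ on $J=[u_{i-1},v_{i-1}]$ via Lemma \ref{lem:RangePhi}, you observe that if $2\phi^{\lambda}$ never entered the new window $(a_{i},a_{i}+\pi)$ on $J$ it would be trapped in $(a_{i-1},a_{i}]$ with a nonnegative drift that — thanks to Definition \ref{def:Cusp}(ii) applied on $J$ to both adjacent cusps — is bounded below by $2\lambda\kappa$ on a subset of measure $>\tfrac12\mu(J)$, yielding a definite increase exceeding the available room $a_{i}-a_{i-1}$ once $\lambda$ is large; your verification that the ``tree-like'' degeneracy $a_{i}-a_{i-1}=\pi$ is excluded by the regular-cusp hypothesis is also correct. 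Your approach is more elementary and self-contained (it does not use the machinery of Section \ref{subsec:Glo} at all), which makes the logical role of the lemma — consistency of initial conditions across overlaps — more transparent; the tradeoff is that the paper's route yields $s_{i+1}$ at a point where $2\phi^{\lambda}$ is already close to $\alpha$, and automatically synchronizes the threshold $\Lambda$ with the one appearing in Corollary \ref{cor:BadSetEst}, whereas with your version one should note explicitly that the final $\Lambda$ used in Step Three must be taken as the maximum of your $\Lambda$ and the one from Corollary \ref{cor:BadSetEst}.
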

\begin{proof}
We continue to use the notation in Section \ref{subsec:Glo} but applied
to the context of $\alpha|_{[u_{i-1},v_{i}]}$ for each $i$. Recall
that $\delta,\varepsilon,\eta,M$ be given parameters. We are then
able to define two compact subsets $F_{i}\supseteq F_{i}'$ of $[u_{i-1},v_{i}]$
playing the roles of $F_{1},F_{2}$, and two numbers $a_{i}'>a_{i},$
$b_{i}'<b_{i}\triangleq a_{i}+\pi$ playing the roles of $a_{\delta},b_{\delta}$
in that section. We further require that these parameters satisfy
the following constraints: 
\begin{equation}
2\varepsilon+\frac{2\eta M}{\varepsilon\sin\varepsilon}<\min_{1\leqslant i\leqslant k}\min\{a_{i}'-a_{i},b_{i}-b_{i}'\},\label{eq:1Choice}
\end{equation}
and
\begin{equation}
\big(\frac{\pi\varepsilon}{2}+\frac{1+\sin\varepsilon}{\sin\varepsilon}\eta+\frac{L}{M}\big)+2(\delta+\eta)<\min_{1\leqslant i\leqslant k-1}(v_{i}-u_{i}).\label{eq:2Choice}
\end{equation}
More quantitatively, we first choose $\delta,\varepsilon$ to be small,
then $M$ to be large, and finally $\eta$ to be small. This is consistent
with the limiting order (\ref{eq:LimOrd}). Now we are in a position
to apply the quantitative estimate given by Corollary \ref{cor:BadSetEst}
to each portion $\alpha|_{[u_{i-1},v_{i}]}$. Note that we do not
take limits for the parameters $\delta,\varepsilon,\eta,M$ here.
The constant $\Lambda$ appearing in Corollary \ref{cor:BadSetEst}
depends on these parameters as well as on the fixed covering structure
given by the $[u_{i-1},v_{i}]$'s.

For any given $\lambda>\Lambda,$ we are going to choose $s_{i}\in[u_{i-1},v_{i-1})$
inductively on $i$, such that $2\phi_{s_{i}}^{\lambda}\in(a_{i},b_{i})$.
We start by fixing $2\phi_{0}^{\lambda}\in(a_{1},b_{1})$ and choosing
$s_{1}\triangleq0.$ Suppose that $s_{i}\in[u_{i-1},v_{i-1})$ is
already selected with the desired property, and we want to define
$s_{i+1}$ properly. According to Corollary \ref{cor:BadSetEst} and
the requirement (\ref{eq:2Choice}), we have 
\[
\mu\big(\big\{ t\in F_{i}'\cap[s_{i},v_{i}]:\big|2\phi_{t}^{\lambda}-\alpha_{t}\big|>2\varepsilon+\frac{2\eta M}{\varepsilon\sin\varepsilon}\big\}\big)<v_{i}-u_{i}-2\delta-2\eta.
\]
Since $\mu((F_{i}')^{c})<\delta+\eta$ from Section \ref{subsec:Glo},
it follows that 
\begin{align*}
 & \mu\big(\big\{ t\in F_{i}'\cap[s_{i},v_{i}]:\big|2\phi_{t}^{\lambda}-\alpha_{t}\big|\leqslant2\varepsilon+\frac{2\eta M}{\varepsilon\sin\varepsilon}\big\}\big)\\
 & >\mu\big(F_{i}'\cap[s_{i},v_{i}]\big)-(v_{i}-u_{i}-2\delta-2\eta)\\
 & \geqslant(v_{i}-s_{i})-(\delta+\eta)-(v_{i}-u_{i}-2\delta-2\eta)\\
 & =u_{i}-s_{i}+\delta+\eta.
\end{align*}
As a result, we have
\[
\mu(C_{i}\cap[u_{i},v_{i}])>\delta+\eta,
\]
where
\[
C_{i}\triangleq\big\{ t\in[s_{i},v_{i}]:\big|2\phi_{t}^{\lambda}-\alpha_{t}\big|\leqslant2\varepsilon+\frac{2\eta M}{\varepsilon\sin\varepsilon}\big\}.
\]
Since $\mu((F_{i+1}')^{c})<\delta+\eta$, we conclude that 
\[
C_{i}\cap F_{i+1}'\cap[u_{i},v_{i})\neq\emptyset.
\]
Pick any point in the above set and define it as $s_{i+1}.$ Note
that from Section \ref{subsec:Glo} we also have $\alpha_{s_{i+1}}\in[a_{i+1}',b_{i+1}']$
(since $s_{i+1}\in F_{i+1}'$). Therefore, the requirement (\ref{eq:1Choice})
further implies that $2\phi_{s_{i+1}}^{\lambda}\in(a_{i+1},b_{i+1}).$
This gives the desired construction of $s_{i+1}.$
\end{proof}

\subsubsection{Step three: patching up the estimates}

We now proceed to establish the global lower estimate. We continue
to work in the previous set-up. For each given $\lambda>\Lambda,$
the previous choice of $s_{i}$ allows us to apply the estimate (\ref{eq:BadSetEst})
to $\alpha|_{[s_{i},v_{i}]}$. In particular, for each $i$ we have
\begin{align*}
 & \mu\big(\big\{ t\in F_{i}'\cap[s_{i},v_{i}]:\big|2\phi_{t}^{\lambda}-\alpha_{t}\big|>2\varepsilon+\frac{2\eta M}{\varepsilon\sin\varepsilon}\big\}\big)\leqslant\frac{\pi\varepsilon}{2}+\frac{1+\sin\varepsilon}{\sin\varepsilon}\eta+\frac{L}{M}.
\end{align*}
Let us define 
\[
D\triangleq\big\{ t\in[0,L]:\left|2\phi_{t}^{\lambda}-\alpha_{t}\right|>2\varepsilon+\frac{2\eta M}{\varepsilon\sin\varepsilon}\big\}.
\]
Then we have

\begin{align}
\mu(D) & \leqslant\sum_{i=1}^{k}\mu\big(\big\{ t\in[s_{i},v_{i}]:\left|2\phi_{t}^{\lambda}-\alpha_{t}\right|>2\varepsilon+\frac{2\eta M}{\varepsilon\sin\varepsilon}\big\}\big)\nonumber \\
 & \leqslant k\big(\frac{\pi\varepsilon}{2}+\frac{1+\sin\varepsilon}{\sin\varepsilon}\eta+\frac{2L}{M}+\eta+\delta\big),\label{eq:DEst}
\end{align}
and 
\begin{align*}
\int_{0}^{L}\cos(\alpha_{t}-2\phi_{t}^{\lambda})dt & \geqslant-\mu(D)+\cos\big(2\varepsilon+\frac{2\eta M}{\varepsilon\sin\varepsilon}\big)\mu(D^{c})\\
 & \geqslant L\cos\big(2\varepsilon+\frac{2\eta M}{\varepsilon\sin\varepsilon}\big)-\big(1+\cos\big(2\varepsilon+\frac{2\eta M}{\varepsilon\sin\varepsilon}\big)\big)\mu(D).
\end{align*}
By substituting the estimate (\ref{eq:DEst}) and taking limit in
the order (\ref{eq:LimOrd}), we conclude that
\[
\underset{\lambda\rightarrow\infty}{\overline{\lim}}\int_{0}^{L}\cos(\alpha_{t}-2\phi_{t}^{\lambda})dt\geqslant L.
\]

\subsubsection{\label{subsec:EndPtIss}Step four: removing the assumption at the
end points}

Finally, we come back to relax the requirement on the endpoints $t=0,L.$
More precisely, we now assume that for each $t\in(0,L)$ (not including
the endpoints), there is a neighbourhood $(u_{t},v_{t})$ of $t$
on which $\gamma$ is a regular cusp. Having all the previous analysis
at hand, dealing with this situation only requires minor technical
effort.

To elaborate this, let $\kappa>0$ be a given number. Then we can
write 
\[
\int_{0}^{L}\cos(\alpha_{t}-2\phi_{t}^{\lambda})dt\geqslant-2\kappa+\int_{\kappa}^{L-\kappa}\cos(\alpha_{t}-2\phi_{t}^{\lambda})dt.
\]
On the other hand, we know that $\gamma|_{[\kappa,L-\kappa]}$ satisfies
Definition \ref{def:SRed} up to the end points $\kappa$ and $L-\kappa$.
In order to apply the previous results to $\gamma|_{[\kappa,L-\kappa]}$,
the only requirement is a suitable initial condition for $2\phi_{\kappa}^{\lambda}.$
But we know that (cf. Lemma \ref{eq:ODEw})
\[
\rho_{\kappa}^{\lambda}e^{i\phi_{\kappa}^{\lambda}}=w_{\kappa}^{\lambda}=\Gamma_{L-\kappa,L}^{\lambda}\xi^{\lambda}.
\]
Since $\Gamma_{L-\kappa,L}^{\lambda}$ is invertible, by choosing
$\xi^{\lambda}$ properly we can certainly guarantee that $2\phi_{\kappa}^{\lambda}$
satisfies a desired condition (i.e. $2\phi_{\kappa}^{\lambda}\in(a_{1},a_{1}+\pi)$
using the notation from the previous discussion). As a result, we
conclude that 
\[
\underset{\lambda\rightarrow\infty}{\overline{\lim}}\int_{0}^{L}\cos(\alpha_{t}-2\phi_{t}^{\lambda})dt\geqslant-2\kappa+(L-2\kappa)=L-4\kappa.
\]
By letting $\kappa\rightarrow0^{+},$ we obtain the desired estimate.

Up to this point, the proof of Theorem \ref{thm:Mthm} is complete.

\section{\label{sec:MCusp}An extension: singular cusps}

We have mentioned at the beginning that there is another type of cusps
which is more singular in terms of detecting the tree-reduced property
and is thus harder to deal with. In this section, we discuss how the
previous analysis, when combined with a suitable comparison lemma,
can be adapted to treat this more singular case. For the sake of conciseness
and for conveying the essential idea better, we only consider a typical
example instead of trying to write down an abstract condition capturing
such type of cusps. We remark at the end of this section on how the
argument can be adapted to a more general situation.

We first illustrate this singular type of cusps in the figure below.\begin{figure}[H]  
\begin{center}  
\includegraphics[scale=0.28]{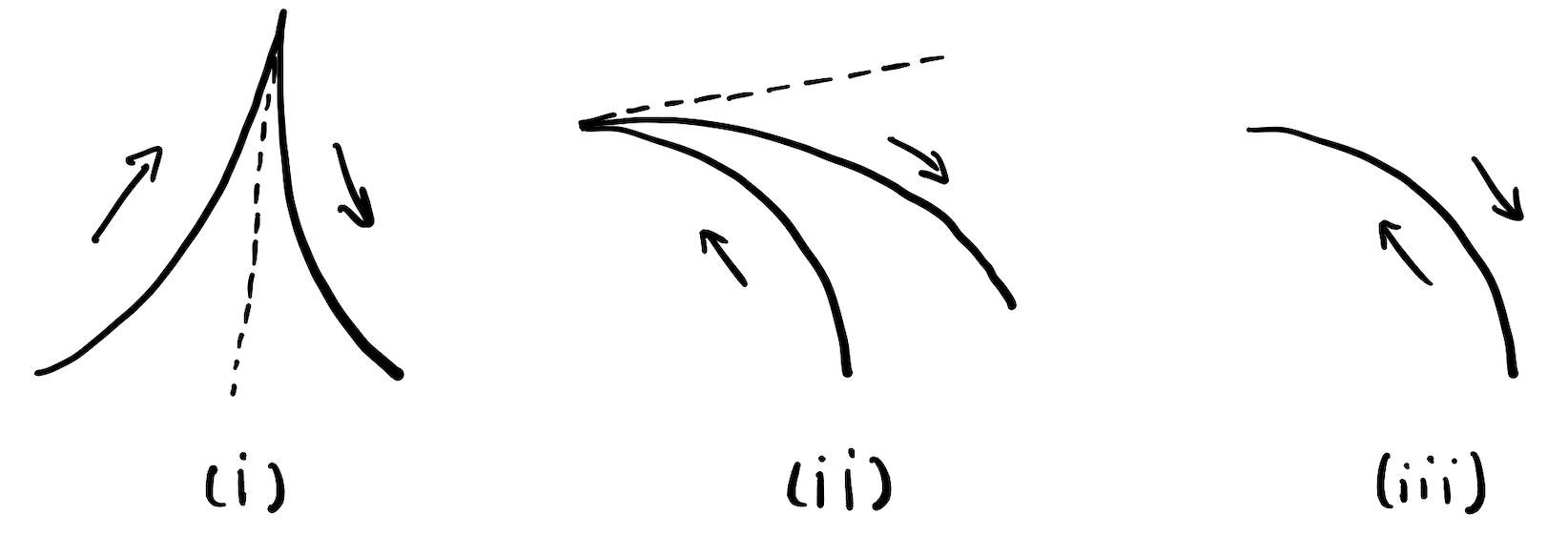} 
\protect\caption{Regular cusp, Singular cusp and tree-like cusp.}\label{fig:SCusp} 
\end{center} 
\end{figure} \noindent The leftmost path represents a typical regular cusp in
the sense of Definition\ref{def:Cusp}. The middle path represents
the singular cusp that we are considering here. The rightmost path
represents a tree-like cusp that has trivial signature. Note that
these paths are all local, i.e. we are only zooming in the part near
the cusp singularity.

It is not hard to describe why these two types of cusps are different
in terms of the ``degree of tree-reducedness''. For the moment let
us assume that the paths are $C^{2}$ near the cusp singularity point.
For the regular cusp in Figure \ref{fig:SCusp} (i), one detects its
tree-reducedness directly from the fact that the second derivative
of the path (more precisely, the first derivative of the angular path
$\beta_{t}$) does not change sign when passing through the singularity.
This also accounts for the property that $\beta_{t}$ takes values
in an interval of length strictly less than $\pi$ after removing
an arbitrarily small neighbourhood of the singularity (cf. Definition
\ref{def:Cusp}). However, for the singular cusp in Figure \ref{fig:SCusp}
(ii), one \textit{cannot} detect whether it is tree-reduced or not
by examining the sign of the second derivative. Instead, the way to
distinguish it from the tree-like cusp in Figure \ref{fig:SCusp}
(iii) is through looking at the precise magnitude of the second derivative
on both sides of the singularity. To put it in another way, it is
the speed of change for $\beta_{t}$ rather than the direction of
change that distinguishes it from being tree-like. Such information
is finer than what is contained in Definition \ref{def:Cusp}. As
a result, dealing with this case requires more delicate analysis.

In what follows, we consider one typical example of singular cusps.
To be more convenient for using the equation (\ref{eq:ODEw}), we
directly specify $\alpha_{t}$ instead of $\beta_{t}$ (recall that
they are related by $\alpha_{t}=\beta_{L-t}$). Let $L,r>0$ and $a\in\mathbb{R}$
be given fixed. Consider a given \textit{continuous}, \textit{strictly
increasing} function 
\[
\theta:[0,L/2]\rightarrow[a-r,a]
\]
with $\theta_{0}=a-r$ and $\theta_{L/2}=a$. The simplest example
of $\theta$ is the linear function. We define the angular path $\alpha:[0,L]\rightarrow\mathbb{R}$
by 
\begin{equation}
\alpha_{t}\triangleq\begin{cases}
\theta_{t}, & t\in[0,L/2);\\
c\cdot(\theta_{L-t}-a)+(a-\pi), & t\in(L/2,L],
\end{cases}\label{eq:CuspAng}
\end{equation}
where $c>0$ is a given fixed number. Note that $\alpha_{L/2-}=a$
and $\alpha_{L/2+}=a-\pi.$ The resulting path $\gamma_{t}$ is given
by the equation (\ref{eq:DefPath}) where $\beta_{t}\triangleq\alpha_{L-t}.$

When $c=1,$ $\gamma$ is tree-like. When $c\neq1,$ $\gamma$ has
the shape of Figure \ref{fig:SCusp} (ii). By considering the reversal
path if necessary, we may assume without loss of generality that $0<c<1.$
In addition, since only the local behaviour of $\gamma$ near the
singularity is relevant, we may also assume that $r$ is small, say
$r<\frac{\pi}{4}.$

Our main result of this section is the following.
\begin{thm}
\label{thm:MCusp}The signature asymptotics formula holds for the
path $\gamma$ defined as above.
\end{thm}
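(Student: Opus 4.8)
The plan is to reduce the singular-cusp case to the regular-cusp machinery of Section~\ref{subsec:Glo} via a comparison argument that exploits the monotonicity of the angular flow \eqref{eq:AEqn}. First I would isolate the two halves of the path. On $[0,L/2]$ the angular path $\alpha_t=\theta_t$ takes values in $[a-r,a]$, an interval of length $r<\pi/4<\pi$, so by Example~\ref{exa:NoCusp} and Lemma~\ref{lem:RangePhi}, Lemma~\ref{lem:DeviPhi}, Lemma~\ref{lem:CatTim} the whole machinery of Section~\ref{subsec:Glo} applies and gives $\varlimsup_{\lambda\to\infty}\int_0^{L/2}\cos(\alpha_t-2\phi_t^\lambda)\,dt\geqslant L/2$ for any admissible initial angle; moreover the terminal angle $2\phi_{L/2}^\lambda$ is, for large $\lambda$, forced into a small neighbourhood of $\alpha_{L/2-}=a$ (this is the content of Corollary~\ref{cor:BadSetEst} applied on the first half, with the ``good'' set having small complement because $\theta$ is continuous hence uniformly continuous). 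The same is true on $(L/2,L]$ taken in isolation: there $\alpha_t=c(\theta_{L-t}-a)+(a-\pi)$ lies in $[a-\pi-cr,\,a-\pi]$, again an interval of length $cr<r<\pi/4$, so Section~\ref{subsec:Glo} applies provided we can guarantee a good initial condition $2\phi_{L/2}^\lambda\in(a-\pi-cr-\kappa,\,a-\pi+\kappa)$ for small $\kappa$. Thus the entire problem is the \emph{matching at $t=L/2$}: the terminal angle of the first half is near $a$, but the second half needs an initial angle near $a-\pi$, and these are the same point on the circle $\mathbb{R}/\pi\mathbb{Z}$ on which the dynamics $2\phi$ genuinely lives (note $\alpha-2\phi$ in \eqref{eq:AEqn} has period $\pi$ in $\phi$). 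So in fact the angles \emph{do} match modulo $\pi$, and the only issue is an honest continuity/branch bookkeeping plus a short transient near the jump.

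The key technical step, which I expect to be the main obstacle, is a \textbf{comparison lemma} controlling $\phi_t^\lambda$ across the jump at $t=L/2$, distinguishing $c<1$ from $c=1$. Heuristically: just before $L/2$, $2\phi_t^\lambda$ tracks $\theta_t$ from slightly above or below with a lag governed by the instantaneous turning speed $\dot\theta_t$ relative to $\lambda$; the \emph{sign} of $\alpha_t - 2\phi_t^\lambda$ on each side of $L/2$ is what determines whether the $\cos(\cdot)$ integrand stays near $+1$. For the tree-like case $c=1$ the path literally retraces itself and $2\phi_t^\lambda$ would land on the ``wrong side,'' producing cancellation; the dilation factor $c<1$ breaks this symmetry. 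Concretely I would prove: there are constants $\delta_0>0$ and $\Lambda_0$ (depending on $c,r,\theta$) such that for $\lambda>\Lambda_0$, whenever $2\phi_s^\lambda$ is within $\delta_0$ of $\alpha_{s}$ for some $s$ slightly past $L/2$, the estimate \eqref{eq:IntegralLow}-type bound propagates. The mechanism: on $(L/2,L]$ the target $\alpha_t$ is \emph{increasing} in $t$ (since $\theta_{L-t}$ decreases and $c>0$, actually $\alpha_t=c\theta_{L-t}+\text{const}$ is decreasing in $t$ --- one must track this carefully), and because $c<1$ the target moves \emph{slower} than $\alpha$ did on the mirrored portion of $[0,L/2]$, so the flow $2\phi_t^\lambda$, which was chasing the fast target, overshoots onto the correct side of the slow target and then Lemma~\ref{lem:DeviPhi} pins it there. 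Making this ``overshoot onto the correct side'' rigorous --- i.e. a one-sided estimate on $2\phi_{L/2+h}^\lambda - \alpha_{L/2+h}$ for $h$ of order $1/\lambda$ --- is the crux; it is essentially an ODE comparison on a boundary layer of width $O(1/\lambda)$ using \eqref{eq:AEqn} together with $|\dot\theta|$ bounded on compact subintervals and the strict inequality $c<1$.

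With the comparison lemma in hand, the remainder is assembly exactly as in Section~\ref{subsec:EndPtIss}. Fix $\kappa>0$ small; write
\[
\int_0^L\cos(\alpha_t-2\phi_t^\lambda)\,dt\;\geqslant\;\int_0^{L/2-\kappa}+\int_{L/2-\kappa}^{L/2+\kappa}+\int_{L/2+\kappa}^{L},
\]
bound the middle integral below by $-2\kappa$, apply Theorem~\ref{thm:LConGlo} (in the global regular-cusp form, which covers the no-cusp case of Example~\ref{exa:NoCusp}) to each outer interval --- on the first getting $\geqslant L/2-\kappa - o(1)$ as $\lambda\to\infty$ for \emph{any} initial angle in the admissible window, and on the second getting $\geqslant L/2-\kappa - o(1)$ \emph{provided} $2\phi_{L/2+\kappa}^\lambda$ lies in the admissible window, which is exactly what the comparison lemma delivers for $\lambda$ large (after shrinking $\kappa$ if needed so the boundary layer $O(1/\lambda)$ fits inside $(L/2,L/2+\kappa)$). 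Summing and letting $\lambda\to\infty$ then $\kappa\to 0^+$ gives $\varlimsup_{\lambda\to\infty}\int_0^L\cos(\alpha_t-2\phi_t^\lambda)\,dt\geqslant L$, and combined with Lemma~\ref{lem:IntLow} and the trivial upper bound $L_1(\gamma)\leqslant\|\gamma\|_{1\text{-var}}=L$ we conclude $L_1(\gamma)=L$, i.e. the signature asymptotics formula \eqref{eq:LengConj} holds for $\gamma$. One subtlety to flag: since here the global angular range is the full interval $[a-r, a]\cup[a-\pi-cr,a-\pi]$, which modulo $\pi$ is a single small arc, one should verify the endpoint-relaxation argument of Section~\ref{subsec:EndPtIss} still supplies a valid initial condition $2\phi_\kappa^\lambda$ via invertibility of $\Gamma_{L-\kappa,L}^\lambda$ --- this is unchanged from the main proof.
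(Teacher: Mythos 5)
Your high-level strategy matches the paper's: isolate the two halves, observe that the obstruction is the jump at $t=L/2$, and supply a comparison lemma that uses $c<1$ to distinguish the genuine cusp from the tree-like case $c=1$. That much is exactly right. However, the mechanism you propose for the comparison lemma has a genuine gap, and it is the crux of the whole argument.

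Your proposed comparison is an ``overshoot in a boundary layer of width $O(1/\lambda)$'' estimate, relying on $|\dot\theta|$ being bounded. But the paper only assumes $\theta$ is continuous and strictly increasing; it need not be differentiable, let alone Lipschitz. More fundamentally, the overshoot picture does not reflect what actually happens. Just after $t=L/2$, $2\phi_t^\lambda\approx a$ while the new target $\alpha_t\approx a-\pi$, so $2\phi_t^\lambda-\alpha_t\approx\pi$: the dynamics sits near the \emph{unstable} equilibrium $\alpha_t+\pi$ of \eqref{eq:AEqn}, where the restoring force $\sin(\alpha_t-2\phi_t^\lambda)$ is close to $0$. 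The question is not whether $2\phi$ overshoots in a thin layer, but on which side of $\alpha_t+\pi$ it falls for \emph{all} $t\in(L/2,L]$. The paper resolves this with a different and sharper idea: introduce the tree-like reference solution $\psi_t^\lambda$ (the solution of \eqref{eq:AEqn} on $[L/2,L]$ for $c=1$), observe by uniqueness and the reflection symmetry of the ODE that $\psi_t^\lambda=\phi_{L-t}^\lambda$ is exactly the time-reversal of the first half, and prove the one-sided bound $\phi_t^\lambda\leqslant\psi_t^\lambda$ on $[L/2,L]$ (Lemma~\ref{lem:Comp}). Combined with Lemma~\ref{lem:C1Lem}, this gives $2\phi_{t_1}^\lambda\lesssim\theta_{L-t_1}$, and then the strict inequality $\theta_{L-t_1}-\alpha_{t_1}=\pi-(1-c)(a-\theta_{L-t_1})<\pi$ (this is precisely where $c<1$ enters) keeps $2\phi_{t_1}^\lambda$ uniformly below $\alpha_{t_1}+\pi$, so the mean-reversion then captures it via Lemma~\ref{lem:C1Lem} again. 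Your ``whenever $2\phi_s^\lambda$ is within $\delta_0$ of $\alpha_s$'' formulation is also circular, since producing such an $s$ is the whole difficulty.

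Two further minor points: the paper handles the two halves via Lemma~\ref{lem:C1Lem} (the continuous case in the appendix), not the full measure-theoretic machinery of Section~\ref{subsec:Glo}, since $\theta$ is continuous on each half; and your observation that $2\phi$ really lives on $\mathbb{R}/\pi\mathbb{Z}$, while correct, is not used in the paper and does not by itself resolve the matching problem, because the branch of $2\phi$ on the line determines the sign of the restoring force near the unstable equilibrium.
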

To prove this result, we first state a lemma which also directly solves
the case when $\alpha:[0,L]\rightarrow\mathbb{R}$ is a continuous
function (i.e. the ${\cal C}^{1}$-case). We defer its proof to the
appendix so as not to distract the reader from the main discussion.
In the ${\cal C}^{1}$-case, Lyons-Xu \cite{LX15} also had a similar
result for the equations of the hyperbolic development.
\begin{lem}
\label{lem:C1Lem}Let $\alpha:[0,L]\rightarrow\mathbb{R}$ be a continuous
function. Recall that the function $\phi_{t}^{\lambda}$ is defined
by the angular equation \eqref{eq:AEqn}. Suppose that there exists
$\kappa\in(0,\pi)$ such that 
\[
|2\phi_{0}^{\lambda}-\alpha_{0}|\leqslant\kappa\ \ \ \text{for all }\lambda>0.
\]
Then for any $t_{0}>0,$ $2\phi_{t}^{\lambda}$ converges uniformly
to $\alpha_{t}$ on $[t_{0},L]$ as $\lambda\rightarrow\infty$. In
addition, if $2\phi_{0}^{\lambda}=\alpha_{0}$ for all $\lambda,$
then the uniform convergence holds on $[0,L].$
\end{lem}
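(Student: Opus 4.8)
The plan is to study the single angular function $\psi_t^\lambda:=2\phi_t^\lambda$, which by \eqref{eq:AEqn} satisfies $\frac{d\psi_t^\lambda}{dt}=-2\lambda\sin(\psi_t^\lambda-\alpha_t)$ for almost every $t$, and to prove $\sup_{t\in[t_0,L]}|\psi_t^\lambda-\alpha_t|\to0$ as $\lambda\to\infty$; the point is to use nothing about $\alpha$ beyond its uniform continuity on $[0,L]$. I would first fix a small $\varepsilon\in(0,(\pi-\kappa)/6)$, pick a partition $0=s_0<s_1<\cdots<s_m=L$ with $\mathrm{osc}_{[s_{j-1},s_j]}\alpha<\varepsilon$, and set $I_j:=[s_{j-1},s_j]$, $a_j:=\min_{I_j}\alpha$, $b_j:=\max_{I_j}\alpha$, $\underline{s}:=\min_j(s_j-s_{j-1})$. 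Since $\psi^\lambda$ restricted to $I_j$ still solves \eqref{eq:AEqn} with the restricted $\alpha$, on each $I_j$ I can invoke the three lemmas of Section \ref{sec:GloLem} with $[0,L]$ replaced by $I_j$.

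The core is a one-piece estimate: assuming $|\psi_{s_{j-1}}^\lambda-\alpha_{s_{j-1}}|<\pi-4\varepsilon$ and $\lambda>\Lambda_\varepsilon:=\pi/(2\underline{s}\sin\varepsilon)$, I claim there is $\tau_j\in[s_{j-1},\,s_{j-1}+\pi/(2\lambda\sin\varepsilon))$ with $\psi_t^\lambda\in[a_j-\varepsilon,b_j+\varepsilon]$ for all $t\in[\tau_j,s_j]$, and moreover $|\psi_t^\lambda-\alpha_t|<3\varepsilon$ on all of $I_j$ when $j\ge2$. To prove this I would enclose $\psi_{s_{j-1}}^\lambda$ together with $[a_j-\varepsilon,b_j+\varepsilon]$ in an open interval $(a,b)$ of width $<\pi-\varepsilon$, which is possible because the gap between the two is $<\pi-4\varepsilon$, the band simply being shifted toward $\psi_{s_{j-1}}^\lambda$. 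If $\psi_{s_{j-1}}^\lambda$ already lies in $[a_j-\varepsilon,b_j+\varepsilon]$, set $\tau_j=s_{j-1}$; otherwise apply Lemma \ref{lem:CatTim} on $I_j$ with $[c,d]=[a_j,b_j]$: since $\alpha_t\in[a_j,b_j]$ for every $t\in I_j$ the exceptional set is null, so the lemma yields $\tau_j-s_{j-1}\le(b-a)/(2\lambda\sin\varepsilon)<\pi/(2\lambda\sin\varepsilon)<\underline{s}\le|I_j|$. Then Lemma \ref{lem:DeviPhi} on $[\tau_j,s_j]$ with $[a,b]=[a_j-\varepsilon,b_j+\varepsilon]$ (width $<3\varepsilon<\pi$, and $r=0$ since $\alpha_t\in[a_j,b_j]$ there) confines $\psi^\lambda$ to $[a_j-\varepsilon,b_j+\varepsilon]$, so $|\psi_t^\lambda-\alpha_t|<2\varepsilon$ for $t\in[\tau_j,s_j]$ and in particular $|\psi_{s_j}^\lambda-\alpha_{s_j}|<2\varepsilon$. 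For $j\ge2$, where $|\psi_{s_{j-1}}^\lambda-\alpha_{s_{j-1}}|<2\varepsilon$ already puts $\psi_{s_{j-1}}^\lambda$ into $[a_j-2\varepsilon,b_j+2\varepsilon]$, Lemma \ref{lem:DeviPhi} with this slightly enlarged band (width $<5\varepsilon<\pi$, $r=0$) confines $\psi^\lambda$ to it on the whole of $I_j$, giving $|\psi_t^\lambda-\alpha_t|<3\varepsilon$ throughout $I_j$; for $j=1$ I would use Lemma \ref{lem:RangePhi} with the shifted width-$\pi$ band to get at least the crude bound $|\psi_t^\lambda-\alpha_t|<\pi$ on $I_1$.

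Then I would iterate: since $|\psi_0^\lambda-\alpha_0|\le\kappa<\pi-4\varepsilon$, the one-piece estimate applies on $I_1$ (for $\lambda>\Lambda_\varepsilon$) and gives $|\psi_{s_1}^\lambda-\alpha_{s_1}|<2\varepsilon$; inductively $|\psi_{s_{j-1}}^\lambda-\alpha_{s_{j-1}}|<2\varepsilon<\pi-4\varepsilon$, so the estimate applies on every $I_j$. Writing $\tau_\lambda:=\pi/(2\lambda\sin\varepsilon)$, this yields $|\psi_t^\lambda-\alpha_t|<3\varepsilon$ for all $t\in[\tau_\lambda,L]$ — on each $I_j$ with $j\ge2$ the entire piece is controlled, and only the first transient $[0,\tau_1]\subseteq[0,\tau_\lambda]$ is exceptional, carrying there the crude bound $|\psi_t^\lambda-\alpha_t|<\pi$. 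Given $t_0>0$, taking $\lambda$ large enough that $\tau_\lambda<t_0$ gives $\sup_{[t_0,L]}|2\phi_t^\lambda-\alpha_t|<3\varepsilon$, and letting $\varepsilon\downarrow0$ proves uniform convergence on $[t_0,L]$. If $2\phi_0^\lambda=\alpha_0$, then $\psi_0^\lambda\in[a_1,b_1]$, so $\tau_1=0$ and Lemma \ref{lem:DeviPhi} controls $\psi^\lambda$ on all of $I_1$; there is then no transient and $|2\phi_t^\lambda-\alpha_t|<3\varepsilon$ holds on all of $[0,L]$, giving uniform convergence there.

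The hard part is the first piece when $\kappa$ is close to $\pi$: one cannot wrap $2\phi_0^\lambda$ and the $\alpha$-band into a width-$<\pi$ interval symmetrically, but since $\kappa+O(\varepsilon)<\pi$ a band shifted toward $2\phi_0^\lambda$ still works, which is precisely the flexibility Lemmas \ref{lem:CatTim} and \ref{lem:RangePhi} permit. The remaining care is bookkeeping: the attraction must be fast enough (time $O(1/\lambda)$, uniformly over pieces) that the total length of the transients $[s_{j-1},\tau_j]$ tends to $0$, and on each piece the error must stay $O(\varepsilon)$ (via Lemma \ref{lem:DeviPhi} with a mildly enlarged band) so that these $O(\varepsilon)$ tolerances do not pile up over the $m$ pieces. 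Everything else is a routine re-application of the three lemmas of Section \ref{sec:GloLem} on successive sub-intervals, with no regularity of $\alpha$ beyond uniform continuity.
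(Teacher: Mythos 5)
Your argument is correct, and it takes a genuinely different technical route from the paper's. The paper proves Lemma \ref{lem:C1Lem} in the appendix by two self-contained steps that never invoke the Section \ref{sec:GloLem} machinery: Lemma \ref{lem:SmallAng} shows that the uniform gap $|2\phi_0^\lambda-\alpha_0|\leqslant\kappa<\pi$ forces $2\phi^\lambda$ to come $\varepsilon$-close to $\alpha$ at some time in $[0,t_0]$ (working on a short interval $[0,\delta]$ where $\alpha$ stays within $\kappa'-\kappa$ of $\alpha_0$, so that $\sin(2\phi^\lambda_s-\alpha_s)$ keeps a sign and a lower bound), and Lemma \ref{lem:SamInt} is a first-exit-time contradiction argument over the whole of $[0,L]$: if the gap ever reaches $2\varepsilon$, one looks at the last time it was $\leqslant\varepsilon$ and splits on whether $t_2-t_1$ is smaller or larger than the continuity scale $h_\varepsilon$, getting a contradiction either from $\omega(h)<\varepsilon$ or from $2\lambda h\sin\varepsilon>2\|\alpha\|_\infty$. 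You instead partition $[0,L]$ into pieces of oscillation $<\varepsilon$ and iterate Lemmas \ref{lem:RangePhi}, \ref{lem:DeviPhi} and \ref{lem:CatTim} piece by piece; the two delicate points you correctly identify and handle are (a) enclosing $2\phi^\lambda_{s_{j-1}}$ and the band $[a_j-\varepsilon,b_j+\varepsilon]$ in a single interval of width $<\pi-\varepsilon$, which is exactly where $\kappa<\pi$ enters, and (b) resetting the endpoint error to $2\varepsilon$ at each $s_j$ via the CatTim--DeviPhi combination so that the tolerances do not accumulate over the $m$ pieces (a pure DeviPhi iteration would degrade by $O(\varepsilon)$ per piece). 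Your route buys uniformity of reusable estimates (it is essentially the Section \ref{sec:PfMthm} scheme specialised to a continuous $\alpha$, where Lusin's theorem is unnecessary and all exceptional sets are null); the paper's route is shorter and avoids the bookkeeping of transients and error propagation by treating $[0,L]$ in one stroke through the exit-time dichotomy. Both are complete proofs.
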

The key point for proving Theorem \ref{thm:MCusp} is to compare the
angle dynamics $2\phi_{t}^{\lambda}$ associated with the cusp path
$\alpha_{t}$ to the one corresponding to the tree-like case. Let
us begin with the trivial observation that the angular equations \eqref{eq:AEqn}
on $[0,L/2]$ for the two cases ($c\neq1$ vs $c=1$) are identical.
In addition, if we take $2\phi_{0}^{\lambda}=\alpha_{0}$, according
to Lemma \ref{lem:C1Lem} and Lemma \ref{lem:RangePhi}, we have
\[
2\phi_{t}^{\lambda}\in R_{1}\triangleq[a-r,a]\ \ \ \forall t\in[0,L/2],
\]
and $\|2\phi_{\cdot}^{\lambda}-\alpha_{\cdot}\|_{\infty;[0,L/2]}$
can be made arbitrarily small when $\lambda$ is large.

To understand the portion of $[L/2,L]$, we first look at the tree-like
situation $c=1$. Let us use $\psi_{t}^{\lambda}$ to denote the corresponding
solution to the angular equation \eqref{eq:AEqn} on $[L/2,L]$ for
this case. By the definition (\ref{eq:CuspAng}) with $c=1$, we know
that 
\begin{align}
\frac{d}{dt}\psi_{t}^{\lambda} & =-\lambda\sin(2\psi_{t}^{\lambda}-(\theta(L-t)-\pi))\nonumber \\
 & =\lambda\sin(2\psi_{t}^{\lambda}-\theta(L-t))\ \ \ \forall t\in[L/2,L].\label{eq:AEtrlk}
\end{align}
As a result of uniqueness, we have
\[
\psi_{t}^{\lambda}=\psi_{L-t}^{\lambda}=\phi_{L-t}^{\lambda}\ \ \ \forall t\in[L/2,L].
\]
In other words, on the second half $[L/2,L]$, the path $\psi_{t}^{\lambda}$
is just the reversal of the first half $[0,L/2]$.

Now we return to the cusp situation with $0<c<1$ given fixed. The
angular equation \eqref{eq:AEqn} can be rewritten as
\begin{equation}
\frac{d\phi_{t}^{\lambda}}{dt}=\lambda\sin\big(2\phi_{t}^{\lambda}-\theta_{L-t}-\varepsilon\cdot(a-\theta_{L-t})\big)\ \ \ \forall t\in[L/2,L],\label{eq:AEcusp}
\end{equation}
where $\varepsilon\triangleq1-c.$ Recall that $\psi_{t}^{\lambda}\triangleq\phi_{L-t}^{\lambda}$
($t\in[L/2,L]$) gives the angular solution in the tree-like case
on $[L/2,L].$

The following comparison lemma is the key step towards the proof of
Theorem \ref{thm:MCusp}.
\begin{lem}
\label{lem:Comp}For each $\lambda>0,$ we have 
\[
\phi_{t}^{\lambda}\leqslant\psi_{t}^{\lambda}\ \ \ \forall t\in[L/2,L].
\]
\end{lem}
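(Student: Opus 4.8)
The plan is to run a first-crossing comparison between $\phi_{\cdot}^{\lambda}$ and $\psi_{\cdot}^{\lambda}$ on $[L/2,L]$, using that the two ODEs \eqref{eq:AEcusp} and \eqref{eq:AEtrlk} start from the same value at $t=L/2$ (namely $\phi_{L/2}^{\lambda}=\psi_{L/2}^{\lambda}$, the common endpoint inherited from the first half, on which both equations coincide) and that the cusp equation is obtained from the tree-like one by subtracting, inside the sine, the nonnegative quantity $\delta_t:=\varepsilon(a-\theta_{L-t})\ge0$, with $\varepsilon=1-c\in(0,1)$. Two auxiliary facts should be recorded first. (a) From the identity $\psi_t^{\lambda}=\phi_{L-t}^{\lambda}$ on $[L/2,L]$ and the bound $2\phi_s^{\lambda}\in[a-r,a]$ on $[0,L/2]$ established above, one has $2\psi_t^{\lambda}\in[a-r,a]$ for all $t\in[L/2,L]$. (b) Since $r<\pi/4$, whenever $2\phi^{\lambda}=2\psi^{\lambda}\in[a-r,a]$ and $\theta_{L-t}\in[a-r,a]$, both $2\phi^{\lambda}-\theta_{L-t}$ and $2\phi^{\lambda}-\theta_{L-t}-\delta_t$ lie in $[-2r,r]\subset(-\tfrac{\pi}{2},\tfrac{\pi}{2})$, the interval on which $\sin$ is strictly increasing.

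With this, I would argue by contradiction. Suppose $\phi_{t_1}^{\lambda}>\psi_{t_1}^{\lambda}$ for some $t_1\in(L/2,L]$ and set $T:=\sup\{t\in[L/2,t_1]:\phi_t^{\lambda}\le\psi_t^{\lambda}\}$. Since $\{t:\phi_t^{\lambda}\le\psi_t^{\lambda}\}$ is closed and contains $L/2$, while $\phi_{t_1}^{\lambda}>\psi_{t_1}^{\lambda}$, we get $T\in[L/2,t_1)$, $\phi_T^{\lambda}=\psi_T^{\lambda}$, and $\phi_t^{\lambda}>\psi_t^{\lambda}$ on $(T,t_1]$. If $T>L/2$ then $\theta_{L-T}<a$, so $\delta_T>0$; by (b) and strict monotonicity of $\sin$ on $(-\tfrac{\pi}{2},\tfrac{\pi}{2})$ one gets $\phi'(T)=\lambda\sin\bigl(2\phi_T^{\lambda}-\theta_{L-T}-\delta_T\bigr)<\lambda\sin\bigl(2\psi_T^{\lambda}-\theta_{L-T}\bigr)=\psi'(T)$. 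Because $\theta$ is continuous, the right-hand sides of \eqref{eq:AEcusp}--\eqref{eq:AEtrlk} are continuous in $t$, so $\phi_{\cdot}^{\lambda}-\psi_{\cdot}^{\lambda}$ is $C^1$ and strictly decreasing on a neighbourhood of $T$; hence $\phi_t^{\lambda}<\psi_t^{\lambda}$ just to the right of $T$, contradicting the definition of $T$.

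It remains to exclude $T=L/2$, where $\delta_{L/2}=0$ makes the inequality at $T$ non-strict; this is the delicate point. Here I would instead invoke the standard ODE comparison theorem on a short interval $[L/2,L/2+\zeta]$. Since $2\phi_{L/2}^{\lambda}-a\in[-r,0]\subset(-\tfrac{\pi}{2},\tfrac{\pi}{2})$ and $\delta_{L/2}=0$, continuity lets us choose $\zeta>0$ so small that $2\phi_t^{\lambda}-\theta_{L-t}$ and $2\phi_t^{\lambda}-\theta_{L-t}-\delta_t$ both stay in $(-\tfrac{\pi}{2},\tfrac{\pi}{2})$ for $t\in[L/2,L/2+\zeta]$; monotonicity of $\sin$ then gives $\phi'(t)\le g(t,\phi_t^{\lambda})$ on that interval, where $g(t,y):=\lambda\sin(2y-\theta_{L-t})$ is Lipschitz in $y$ with constant $2\lambda$ and satisfies $\psi'(t)=g(t,\psi_t^{\lambda})$. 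Since $\phi_{L/2}^{\lambda}=\psi_{L/2}^{\lambda}$, the comparison theorem yields $\phi_t^{\lambda}\le\psi_t^{\lambda}$ on $[L/2,L/2+\zeta]$, again contradicting $T=L/2$. This establishes $\phi_t^{\lambda}\le\psi_t^{\lambda}$ throughout $[L/2,L]$.

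The main obstacle is precisely that a naive \emph{global} comparison of the two vector fields is false: $2\phi_t^{\lambda}$ is eventually pulled toward $\alpha_t\approx a-\pi$, so it leaves the window where $\sin$ is monotone, and $\lambda\sin(2y-\theta_{L-t}-\delta_t)\le\lambda\sin(2y-\theta_{L-t})$ cannot hold for all $y$. The resolution exploited above is that monotonicity of $\sin$ is only ever needed \emph{at a crossing time}, where $2\phi^{\lambda}=2\psi^{\lambda}\in[a-r,a]$ automatically falls in the favourable range $(-\tfrac{\pi}{2},\tfrac{\pi}{2})$ by virtue of $r<\pi/4$; the degenerate crossing at $t=L/2$, where $\delta_t$ vanishes, is exactly why one needs the separate short-interval comparison rather than a one-line first-crossing estimate.
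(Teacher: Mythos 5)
Your proof is correct, and it follows the same two-stage structure as the paper's (handle the degenerate endpoint $t=L/2$ separately, then run a first-crossing argument for $t>L/2$), but the technical execution is noticeably different. The paper applies the product-to-sum identity to write $\zeta_t'=2\lambda\cos(\cdot)\sin(\zeta_t+\eta_t)$ for $\zeta_t=\psi_t^{\lambda}-\phi_t^{\lambda}$, then uses an explicit Gr\"onwall bound $|\zeta_t|\leqslant 2\lambda(t-L/2)\eta_t e^{2\lambda(t-L/2)}$ to force $\zeta_t+\eta_t\geqslant\tfrac{1}{2}\eta_t\geqslant 0$ near $L/2$, separately checks the cosine factor is bounded below, and only then runs the first-crossing step. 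You instead work directly with the two sine vector fields: at a crossing time $T>L/2$ you observe that $\phi_T^{\lambda}=\psi_T^{\lambda}$ automatically lands the sine arguments in $[-2r,r]\subset(-\pi/2,\pi/2)$ (your facts (a)--(b)), where strict monotonicity of $\sin$ plus $\delta_T>0$ gives a strict derivative gap and an immediate contradiction; and at $T=L/2$, where this gap degenerates, you invoke the standard Lipschitz ODE comparison theorem on a short interval $[L/2,L/2+\zeta]$ on which $\phi'\leqslant g(t,\phi)$ and $\psi'=g(t,\psi)$. Your route avoids the trigonometric product-to-sum manipulation entirely and replaces the hand-rolled Gr\"onwall estimate with a citation of the comparison principle, which is somewhat cleaner; what it costs is that the key "favourable range" observation has to be re-verified at each crossing time rather than once up front, whereas the paper's product form makes the sign structure of $\zeta'$ visible globally. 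Both are valid; yours is arguably the more standard comparison-theoretic presentation. One small thing worth recording explicitly is the verification, at a crossing $T>L/2$, that $2\phi_T^{\lambda}-\theta_{L-T}-\delta_T\in[-2r,r]$ uses $\delta_T\leqslant\varepsilon r<r$, so $r<\pi/4$ really is needed there and not just at $t=L/2$; you do state this via fact (b), so the argument is complete.
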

\begin{proof}
Let $\zeta_{t}\triangleq\psi_{t}^{\lambda}-\phi_{t}^{\lambda}$ ($t\in[L/2,L]$).
Then $\zeta_{L/2}=0$, and using the equations (\ref{eq:AEtrlk}),
(\ref{eq:AEcusp}) for the two functions, we see that $\zeta_{t}$
satisfies the equation
\begin{align*}
\frac{d\zeta_{t}}{dt} & =2\lambda\cos\frac{(2\psi_{t}^{\lambda}-\theta_{L-t})+(2\phi_{t}^{\lambda}-\theta_{L-t})-\varepsilon(a-\theta_{L-t})}{2}\\
 & \ \ \ \times\sin\big(\zeta_{t}+\frac{\varepsilon(a-\theta_{L-t})}{2}\big).
\end{align*}

Firstly, we claim that $\zeta_{t}\geqslant0$ when $t$ is small.
To this end, we set $\eta_{t}\triangleq\frac{\varepsilon(a-\theta_{L-t})}{2}.$
By the assumption on $\theta_{t},$ the function $\eta_{t}$ is non-negative,
and strictly increases from $0$ to $\frac{\varepsilon r}{2}$. Therefore,
we have 
\begin{align*}
|\zeta_{t}| & \leqslant2\lambda\int_{L/2}^{t}\big|\sin(\zeta_{s}+\eta_{s})\big|ds\\
 & \leqslant2\lambda\int_{L/2}^{t}|\zeta_{s}|ds+2\lambda\big(t-\frac{L}{2}\big)\eta_{t}.
\end{align*}
It follows from Gr\"onwall's inequality that 
\[
|\zeta_{t}|\leqslant2\lambda\big(t-\frac{L}{2}\big)\eta_{t}\times e^{2\lambda(t-L/2)},\ \ \ t\in[L/2,L].
\]
As a consequence, we have 
\[
\zeta_{t}+\eta_{t}\geqslant\big(1-2\lambda\big(t-\frac{L}{2}\big)e^{2\lambda(t-L/2)}\big)\eta_{t}.
\]
In particular, there exists $\tau_{1}\in(L/2,L)$ (depending on $\lambda$)
such that 
\[
\zeta_{t}+\eta_{t}\geqslant\frac{1}{2}\eta_{t}\geqslant0\ \ \ \forall t\in[L/2,\tau_{1}].
\]
We also require that $\zeta_{t}+\eta_{t}\leqslant\pi$ by shrinking
$\tau_{1}$ if necessary. As a result, we have
\[
\sin(\zeta_{t}+\eta_{t})\geqslant0\ \ \ \forall t\in[L/2,\tau_{1}].
\]
On the other hand, observe that 
\[
|2\psi_{t}^{\lambda}-\theta_{L-t}|\leqslant r
\]
since they both stay in the region $R_{1}.$ By the relation $\phi_{L/2}^{\lambda}=\psi_{L/2}^{\lambda}$
and the continuity of $\phi_{t}^{\lambda}$ at $t=L/2$, there exists
$\tau_{2}\in(L/2,L)$ (depending on $\lambda$) satisfying 
\[
\cos\frac{(2\psi_{t}^{\lambda}-\theta_{L-t})+(2\phi_{t}^{\lambda}-\theta_{L-t})-\varepsilon(a-\theta_{L-t})}{2}\geqslant\cos\frac{(3+\varepsilon)r}{2}=:\kappa_{r}>0
\]
for $t\in[L/2,\tau_{2}]$ (recall we have presumed that $c\in(0,1)$
and $r<\frac{\pi}{4}$). By taking $\tau\triangleq\tau_{1}\wedge\tau_{2}$,
we obtain
\[
\zeta_{t}\geqslant2\lambda\kappa_{r}\int_{L/2}^{t}\sin(\zeta_{s}+\eta_{s})ds\geqslant0\ \ \ \forall t\in[L/2,\tau].
\]

Next, we claim that $\zeta_{t}\geqslant0$ for all $t\in[L/2,L].$
Suppose on the contrary that, there is some $t$ such that $\zeta_{t}<0.$
This $t$ must be in the interval $(\tau,L].$ A standard argument
allows us to find $t_{1},t_{2}\in[\tau,L]$ such that $\zeta_{t_{1}}=0$
and $\zeta_{t}<0$ for $t\in(t_{1},t_{2}]$. However, since $\eta_{t}$
is strictly increasing, we know that $\eta_{t_{1}}\in(0,\varepsilon r/2).$
As a result, 
\begin{align*}
\zeta_{t_{1}}' & =2\lambda\cos\big(2\psi_{t_{1}}^{\lambda}-\theta_{L-t_{1}}-\frac{\varepsilon(a-\theta_{L-t_{1}})}{2}\big)\sin\eta_{t_{1}}\\
 & \geqslant2\lambda\cos\big(\frac{(2+\varepsilon)r}{2}\big)\sin\eta_{t_{1}}\\
 & >0,
\end{align*}
which is clearly a contradiction. Therefore, $\zeta_{t}\geqslant0$
on $[L/2,L].$
\end{proof}
Now we are in a position to give the proof of Theorem \ref{thm:MCusp}.
The idea is that, at any fixed time $t^{*}>L/2$, when $\lambda$
is large $2\phi_{t^{*}}^{\lambda}$ gets pushed into the interval
$R_{2}\triangleq[a-\pi-cr,a-\pi]$ (the range of $\alpha$ on $[L/2,L]$).
As a consequence, we can then apply results obtained in Section \ref{sec:PfMthm}
to the portion of $[t^{*},L].$ The theorem then follows as $t^{*}-L/2$
can be made arbitrarily small.

\begin{proof}[Proof of Theorem \ref{thm:MCusp}]

Let $t_{1}<t_{2}$ be two fixed times in $(L/2,L).$ We claim that,
there exists $\Lambda>0$ such that 
\begin{equation}
2\phi_{t_{2}}^{\lambda}\in R_{2}\ \ \ \forall\lambda>\Lambda.\label{eq:PushCl}
\end{equation}
If this is true, the argument developed in Section \ref{sec:PfMthm}
applied to the portion of $[t_{2},L]$ implies that
\begin{align*}
\underset{\lambda\rightarrow\infty}{\overline{\lim}}\int_{0}^{L}\cos(\alpha_{t}-2\phi_{t}^{\lambda})dt & =\underset{\lambda\rightarrow\infty}{\overline{\lim}}\big(\int_{0}^{L/2}+\int_{L/2}^{t_{2}}+\int_{t_{2}}^{L}\big)\cos(\alpha_{t}-2\phi_{t}^{\lambda})dt\\
 & \geqslant\frac{L}{2}+(L-t_{2})-\big(\frac{L}{2}-t_{2}\big).
\end{align*}
The desired lower bound follows by letting $t_{2}\rightarrow L/2$.

The proof of the claim (\ref{eq:PushCl}) contains the following three
observations.

\vspace{2mm} \noindent (i) From Lemma \ref{lem:Comp}, we know that
\[
2\phi_{t_{1}}^{\lambda}\leqslant2\psi_{t_{1}}^{\lambda}=2\phi_{L-t_{1}}^{\lambda}.
\]
(ii) According to Lemma \ref{lem:C1Lem}, $2\psi_{t_{1}}^{\lambda}$
and $\theta_{L-t_{1}}$ can be made arbitrarily close when $\lambda$
is large. \\
(iii) The distance between $\alpha_{t_{1}}$ and $\theta_{L-t_{1}}$
is strictly less than $\pi.$ Indeed,
\begin{align*}
\theta_{L-t_{1}}-\alpha_{t_{1}} & =\theta_{L-t_{1}}-c\cdot(\theta_{L-t_{1}}-a)-a+\pi\\
 & =\pi-(1-c)(a-\theta_{L-t_{1}}),
\end{align*}
which is less than $\pi$ since $c\in(0,1)$ and $\theta_{L-t_{1}}<a$.

\vspace{2mm} \noindent  To prove the claim (\ref{eq:PushCl}) precisely,
first observe from Lemma \ref{lem:RangePhi} that, if $2\phi_{t}^{\lambda}$
ever enters the region $R_{2}$ during $(L/2,t_{2})$, it will remain
in $R_{2}$ afterwards since $\alpha_{t}\in R_{2}$ for $t\in[L/2,L]$.
In particular, we have $2\phi_{t_{2}}^{\lambda}\in R_{2}$ in this
case. Let us now assume the other case that $2\phi_{t}^{\lambda}\in[a-\pi,a]$
for $t\in[L/2,t_{2}]$. According to the above points (i)--(iii),
in this case we see that the distance between $2\phi_{t_{1}}^{\lambda}$
and $\alpha_{t_{1}}$ is uniformly less than $\pi$ for all large
$\lambda$. As a consequence of Lemma \ref{lem:C1Lem}, we conclude
that 
\[
\lim_{\lambda\rightarrow\infty}\big|2\phi_{t_{2}}^{\lambda}-\alpha_{t_{2}}\big|=0.
\]
In particular, $2\phi_{t_{2}}^{\lambda}\in R_{2}$ when $\lambda$
is large. This proves the desired claim.

\end{proof}

We give some further comments to conclude the discussion for this
section. Although we are only considering a particular type of examples
here, the above argument can be adapted to deal with the more general
situation where $\alpha_{t}$ is $C^{2}$ near the singularity $t_{*}\in(0,L)$
and $|\alpha'(t_{*}-)|\neq|\alpha'(t_{*}+)|$. For simplicity, suppose
that on the portion of $[0,t_{*}]$ we are in the setting of Section
\ref{sec:PfMthm}, so that we have the estimate 
\begin{equation}
\int_{0}^{t_{*}}\cos(\alpha_{t}-2\phi_{t}^{\lambda})dt\gtrsim t_{*}\label{eq:BefSig}
\end{equation}
when $\lambda$ is large. To deal with the portion after $t_{*}$,
the point is that, the formula (\ref{eq:CuspAng}) provides a good
approximation of the actual path $\alpha_{t}$ in a small neighbourhood
$(t_{*}-h,t_{*}+h)$ of $t_{*}$, where the parameter $c\neq1$ captures
the difference between the magnitudes of the left and right derivatives
of $\alpha$ at the singularity $t_{*}.$ Using a suitable comparison
lemma, one can then show that, after passing through the singularity
$t_{*}$, when $\lambda$ is large the angular path $2\phi_{t}^{\lambda}$
gets pushed into the region where $\alpha|_{(t_{*},t_{*}+h)}$ belongs.
As a result, the initial condition of $2\phi_{t}^{\lambda}$ on the
portion of $[t_{*}+h,L]$ is favorable (relative to $\alpha(t_{*}+h)$),
and the analysis developed in Section \ref{sec:PfMthm} again leads
to the estimate 
\begin{equation}
\int_{t_{*}+h}^{L}\cos(\alpha_{t}-2\phi_{t}^{\lambda})dt\gtrsim L-t_{*}-h\label{eq:AftSig}
\end{equation}
when $\lambda$ is large. By adding up the two estimates (\ref{eq:BefSig}),
(\ref{eq:AftSig}) and letting $h\rightarrow0^{+},$ we arrive at
the formula (\ref{eq:LengConj}).

\section{\label{sec:FurQ}Some further questions}

One may wonder how much further one needs to go towards a complete
solution to the length conjecture (\ref{eq:LengConj})? In our modest
opinion, a critical step is to identify a suitable way to capture
the \textit{degree} of tree-reducedness at a \textit{quantitative}
level, which is at the same time convenient to be utilised in the
signature analysis. This is a quite challenging part of the problem
as the tree-reduced property is essentially topological. Although
Theorem \ref{thm:Mthm} does not resolve the problem completely, we
believe that it provides an interesting and important attempt along
this philosophy. For instance, in the simplified context of Example
\ref{exa:NoCusp}, the condition that $\beta_{t}$ stays in an interval
of length $r<\pi$ reflects the extent to which the path cannot turn
around immediately (thus being tree-reducedness). As $r$ gets smaller,
the path tends to be ``more tree-reduced''. While as $r$ gets closer
to $\pi,$ the condition becomes less sensitive for detecting the
tree-reduced property, and the extreme case of $r=\pi$ allows the
possibility of creating tree-like pieces.

To go deeper into the study, among others there are at least two interesting
questions one can raise and investigate.
\begin{question}
Can we extend the current approach to higher dimensional paths?
\end{question}
At the moment, this is not entirely straight forward. For higher dimensional
paths, one may resort to Cartan developments onto higher dimensional
Lie groups, e.g. ${\rm SL}_{n}(\mathbb{R}),$ ${\rm SO}(p,q)$ etc.
If one designs the development in a clever way, it may not be too
surprising to end up with an ODE system in which the angular component
for the group action is decoupled from the radial component just like
the equations \eqref{eq:REqn} and \eqref{eq:AEqn}. However, one faces
another level of challenge (which is harder to overcome) due to the
lack of monotonicity properties for the angular equation, since the
angle dynamics is now taking values in the $n$-sphere $S^{n}$ rather
than in $\mathbb{R}$ (or $S^{1}$).

On the other hand, we have made use of the intuition that the tree-reduced
property is reflected by the incapability of making a $\pi$-turn
locally. There is a weaker type of conditions that captures this property
in a more direct way, which is expressed in terms of turning angles:
\begin{equation}
|\beta_{t}-\beta_{s}|\leqslant\kappa\ \ \ {\rm for\ a.a.\ }s,t\label{eq:TurnCond}
\end{equation}
with some given constant $\kappa\in(0,\pi).$
\begin{question}
Is it possible to prove the signature asymptotics formula (\ref{eq:LengConj})
under the condition (\ref{eq:TurnCond}) or more generally under a
suitably localised version of (\ref{eq:TurnCond})?
\end{question}
Clearly, the condition (\ref{eq:GloCond}) implies the condition (\ref{eq:TurnCond}).
However, it is possible to construct a bounded variation path that
satisfies (\ref{eq:TurnCond}) but is not strongly tree-reduced in
the sense of Definition \ref{def:SRed}. For instance, let $\{A,B,C\}$
be a Lebesgue measurable partition of $[0,1],$ such that for every
open subset $U$ of $[0,1]$ one has
\[
\mu(U\cap A)>0,\ \mu(U\cap B)>0,\ \mu(U\cap C)>0.
\]
The existence of $\{A,B,C\}$ is a (non-trivial) exercise in real
analysis. Define $\beta:[0,1]\rightarrow S^{1}$ by 
\[
\beta_{t}=0\cdot{\bf 1}_{A}(t)+e^{2\pi i/3}\cdot{\bf 1}_{B}(t)+e^{4\pi i/3}\cdot{\bf 1}_{C}(t).
\]
Then $\beta_{t}$ satisfies (\ref{eq:TurnCond}) with $\kappa=\frac{2\pi}{3}$,
where the distance $|\beta_{t}-\beta_{s}|$ is understood as the $S^{1}$-distance.
The resulting path $\gamma_{t}\triangleq\int_{0}^{t}\beta_{s}ds$
is tree-reduced but not strongly tree-reduced. At this point, it is
not so clear if the current strategy can be adapted to deal with this
weaker type of conditions.

\begin{appendices}
\section{Proof of Lemma \ref{lem:C1Lem} and the $\mathcal{C}^1$-case.}

In this section, we give the proof of Lemma \ref{lem:C1Lem}, which
also directly implies the signature asymptotics formula (\ref{eq:LengConj})
for planar ${\cal C}^{1}$-paths. Let $\alpha:[0,L]\rightarrow\mathbb{R}$
be a given continuous function.

We first consider the case when the initial condition of $2\phi_{t}^{\lambda}$
coincides with $\alpha_{0}.$
\begin{lem}
\label{lem:SamInt}For each $\lambda>0,$ let $(\phi_{t}^{\lambda})_{0\leqslant t\leqslant L}$
be the solution to the differential equation
\begin{equation}
\begin{cases}
d\phi_{t}^{\lambda}=\lambda\sin(\alpha_{t}-2\phi_{t}^{\lambda})dt, & 0\leqslant t\leqslant L,\\
\phi_{0}^{\lambda}=\alpha_{0}/2.
\end{cases}\label{eq:DEAng}
\end{equation}
Then $2\phi_{t}^{\lambda}$ converges uniformly to $\alpha_{t}$ on
$[0,L]$ as $\lambda\rightarrow\infty.$
\end{lem}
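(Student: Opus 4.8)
The plan is to exploit the strong contraction of $2\phi_t^\lambda$ toward $\alpha_t$ that is built into the angular equation \eqref{eq:AEqn} whenever $\alpha$ is locally almost constant, together with the uniform continuity of $\alpha$ on the compact interval $[0,L]$. Since $\alpha$ is merely continuous, I would not write a differential equation for the difference $2\phi_t^\lambda-\alpha_t$; instead I would freeze $\alpha$ to a constant on each cell of a fine partition and carry the oscillation as an error term. Concretely, fix $\epsilon>0$, pick $\eta\in(0,\min(\epsilon/4,\pi/8))$, and by uniform continuity choose a partition $0=s_0<s_1<\cdots<s_N=L$ so fine that the oscillation of $\alpha$ over each $[s_{j-1},s_j]$ is $<\eta$; set $\ell_\ast\triangleq\min_j(s_j-s_{j-1})>0$. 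On a cell $[s_{j-1},s_j]$ I put $u_t\triangleq2\phi_t^\lambda-\alpha_{s_{j-1}}$ and $\vartheta_t\triangleq\alpha_t-\alpha_{s_{j-1}}$, so that $|\vartheta_t|<\eta$ and \eqref{eq:AEqn} becomes $\dot u_t=2\lambda\sin(\vartheta_t-u_t)$ (here $u$ is $C^1$ since $\alpha$ is continuous).

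The key is a barrier estimate for this scalar equation. If $u_t\in[2\eta,3\eta]$ then $\vartheta_t-u_t\in[-4\eta,-\eta]\subset(-\tfrac{\pi}{2},0)$, whence $\dot u_t\le-2\lambda\sin\eta<0$; symmetrically, $\dot u_t\ge2\lambda\sin\eta$ on $u_t\in[-3\eta,-2\eta]$. Consequently the interval $[-2\eta,2\eta]$ is forward invariant, the bound $|u_t|\le\max(|u_{s_{j-1}}|,2\eta)$ holds throughout the cell, and if $|u_{s_{j-1}}|\le3\eta$ then $u$ enters $[-2\eta,2\eta]$ within time $\eta/(2\lambda\sin\eta)$. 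Hence, as soon as $\lambda>\Lambda_\epsilon\triangleq\eta/(2\ell_\ast\sin\eta)$ and $|u_{s_{j-1}}|\le3\eta$, we obtain $|u_t|\le3\eta$ on the whole cell and $|u_{s_j}|\le2\eta$.

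I would then run an induction over the cells. At $j=0$ one has $2\phi_0^\lambda=\alpha_0$, so the error vanishes at $s_0$. Assuming $|2\phi_{s_{j-1}}^\lambda-\alpha_{s_{j-1}}|\le3\eta$, the previous paragraph gives, for every $\lambda>\Lambda_\epsilon$, both $|2\phi_{s_j}^\lambda-\alpha_{s_j}|\le|u_{s_j}|+|\vartheta_{s_j}|\le3\eta$ — which closes the induction — and $|2\phi_t^\lambda-\alpha_t|\le|u_t|+|\vartheta_t|\le4\eta<\epsilon$ for all $t\in[s_{j-1},s_j]$. Since $N$ and $\Lambda_\epsilon$ depend only on $\epsilon$ (through $\alpha$), this yields $\sup_{[0,L]}|2\phi_t^\lambda-\alpha_t|<\epsilon$ for all $\lambda>\Lambda_\epsilon$, which is exactly the assertion.

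The main obstacle is to make the induction non-accumulating: a priori the gap $|2\phi^\lambda-\alpha|$ could increase from cell to cell. What rescues the argument is that the contraction in the barrier step brings the gap back down to size $O(\eta)$ regardless of the incoming gap, provided all the relevant sine arguments remain in the monotone window $(-\tfrac{\pi}{2},0)$; this is what forces $\eta$, and hence the mesh, to be chosen before letting $\lambda\to\infty$. An alternative to the direct barrier computation is to carry out the per-cell step using Lemma \ref{lem:CatTim} (to bound the time for $2\phi^\lambda$ to enter the $\eta$-enlargement of the range of $\alpha$ on the cell) followed by Lemma \ref{lem:DeviPhi} with $r=0$ (to keep it there), exactly as in the localisation argument of Section \ref{sec:PfMthm}; both routes rely on $\eta<\pi$ to use monotonicity of the sine.
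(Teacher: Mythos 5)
Your proof is correct, and it takes a genuinely different route from the paper's. The paper argues by a single \emph{first-exit-time contradiction}: it fixes $\varepsilon$, picks $h$ with $\omega(h)<\varepsilon$ (where $\omega$ is the modulus of continuity of $\alpha$), then supposes the gap $|2\phi_t^\lambda-\alpha_t|$ ever reaches $2\varepsilon$, takes $t_2$ to be the first such time and $t_1$ the last time before $t_2$ at which the gap was $\leqslant\varepsilon$, and integrates the ODE over $[t_1,t_2]$; the two cases $t_2-t_1<h$ (controlled by $\omega(h)$) and $t_2-t_1\geqslant h$ (controlled by $2\|\alpha\|_\infty$ together with the drift term $-2\lambda(t_2-t_1)\sin\varepsilon$) each yield a contradiction as soon as $\lambda>\|\alpha\|_\infty/(h\sin\varepsilon)$. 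You instead partition $[0,L]$ into cells of length $\geqslant\ell_\ast$, freeze $\alpha$ to a constant on each cell, prove a barrier and forward-invariance estimate for the resulting near-autonomous equation $\dot u_t=2\lambda\sin(\vartheta_t-u_t)$, and chain these estimates by induction over cells, with the lower bound $\ell_\ast$ on the cell length guaranteeing enough time for the contraction to absorb the incoming gap of size $3\eta$ down to $2\eta$. Both routes exploit the same two ingredients --- uniform continuity of $\alpha$ and monotonicity of $\sin$ on a window of width less than $\pi$ --- but the paper's is shorter and avoids the bookkeeping needed to show the cell-by-cell error does not accumulate, whereas yours is more modular and, as you note yourself, runs structurally parallel to the localisation scheme of Section \ref{sec:PfMthm}, with your entry-time and invariance steps playing precisely the roles of Lemma \ref{lem:CatTim} and Lemma \ref{lem:DeviPhi} (with $r=0$) there.
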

\begin{proof}
Let 
\[
\omega(h)\triangleq\sup_{\substack{|s-t|<h}
}|\alpha_{t}-\alpha_{s}|
\]
be the modulus of continuity of $\alpha.$ Given $\varepsilon\in(0,\pi/4),$
choose $h=h_{\varepsilon}$ so that $\omega(h)<\varepsilon.$ Let
$\lambda>\frac{\|\alpha\|_{\infty}}{h\sin\varepsilon}.$ We claim
that $|2\phi_{t}^{\lambda}-\alpha_{t}|<2\varepsilon$ for all $t\in[0,L].$
Suppose on the contrary that $|2\phi_{t}^{\lambda}-\alpha_{t}|\geqslant2\varepsilon$
for some $t$. Define 
\[
t_{2}\triangleq\inf\{s\in[0,L]:|2\phi_{s}^{\lambda}-\alpha_{s}|\geqslant2\varepsilon\},
\]
and
\[
t_{1}\triangleq\sup\{0\leqslant s\leqslant t_{2}:|2\phi_{s}^{\lambda}-\alpha_{s}|\leqslant\varepsilon\}.
\]
Apparently, $0<t_{1}<t_{2}\leqslant L.$ Moreover, $|2\phi_{t_{1}}^{\lambda}-\alpha_{t_{1}}|=\varepsilon$
and 
\[
\varepsilon\leqslant|2\phi_{s}^{\lambda}-\alpha_{s}|\leqslant2\varepsilon\ \ \ \forall s\in[t_{1},t_{2}].
\]
Using the differential equation (\ref{eq:DEAng}), we also have 
\begin{equation}
2\phi_{t_{2}}^{\lambda}-\alpha_{t_{2}}=2\phi_{t_{1}}^{\lambda}-\alpha_{t_{1}}+2\lambda\int_{t_{1}}^{t_{2}}\sin(\alpha_{s}-2\phi_{s}^{\lambda})ds+(\alpha_{t_{1}}-\alpha_{t_{2}}).\label{eq: ODE estimate sin}
\end{equation}

If $2\phi_{t_{2}}^{\lambda}-\alpha_{t_{2}}=2\varepsilon,$ by the
definition of $t_{1}$ we must have $2\phi_{t_{1}}^{\lambda}-\alpha_{t_{1}}=\varepsilon$
and thus
\[
\varepsilon\leqslant2\phi_{s}^{\lambda}-\alpha_{s}\leqslant2\varepsilon\ \ \ \forall s\in[t_{1},t_{2}].
\]
Therefore, (\ref{eq: ODE estimate sin}) implies that
\begin{align*}
2\varepsilon & \leqslant\varepsilon-2\lambda(t_{2}-t_{1})\sin\varepsilon+|\alpha_{t_{1}}-\alpha_{t_{2}}|\\
 & \leqslant\begin{cases}
\varepsilon+\omega(h)<2\varepsilon, & {\rm if}\ t_{2}-t_{1}<h;\\
\varepsilon+2\|\alpha\|_{\infty}-2\lambda h\sin\varepsilon<\varepsilon, & {\rm if}\ t_{2}-t_{1}\geqslant h.
\end{cases}
\end{align*}
This is clearly a contradiction. The case when $2\phi_{t_{2}}^{\lambda}-\alpha_{t_{2}}=-2\varepsilon$
is treated in a similar way. Consequently, we conclude that $|2\phi_{t}^{\lambda}-\alpha_{t}|<2\varepsilon$
for all $t\in[0,L],$ provided that $\lambda>\frac{\|\alpha\|_{\infty}}{h\sin\varepsilon}$.
\end{proof}
Now we extend the above argument to give a proof of Lemma \ref{lem:C1Lem}.
Namely, under the assumption that 
\begin{equation}
|2\phi_{0}^{\lambda}-\alpha_{0}|\leqslant\kappa<\pi\ \ \ \forall\lambda>0,\label{eq:C1Initial}
\end{equation}
we want to establish the uniform convergence of $2\phi_{t}^{\lambda}$
towards $\alpha_{t}$ on $[t_{0},L]$ where $t_{0}>0$ is a given
fixed time. In order to use the previous proof, the crucial point
is to see that, when $\lambda$ is large the quantity $2\phi_{t}^{\lambda}-\alpha_{t}$
can be brought down to the region $(-\varepsilon,\varepsilon)$ at
some time in $[0,t_{0}].$ The heuristic reason for such a property
is simple to describe. Since $|2\phi_{0}^{\lambda}-\alpha_{0}|$ is
uniformly less than $\pi$, at the initial stage of the dynamics (i.e.
when $t$ is small), the quantity $\sin(2\phi_{t}^{\lambda}-\alpha_{t})$
is uniformly away from zero. Therefore, when $\lambda$ is large,
the mean-reversing property gets rather significant and is thus pushing
$2\phi_{t}^{\lambda}$ to be close to $\alpha_{t}$ very quickly.
Let us now make the heuristics precise.
\begin{lem}
\label{lem:SmallAng}Suppose that (\ref{eq:C1Initial}) holds for
some given constant $\kappa\in(0,\pi).$ Let $t_{0}>0$ be fixed.
Then for any $\varepsilon>0,$ there exists $\Lambda=\Lambda_{\varepsilon,t_{0}}>0,$
such that for each $\lambda>\Lambda$ we have
\begin{equation}
|2\phi_{s}^{\lambda}-\alpha_{s}|<\varepsilon\ \ \ \text{for some }s\in[0,t_{0}].\label{eq:SmallAng}
\end{equation}
\end{lem}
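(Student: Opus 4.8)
The plan is to argue by contradiction: fix $\varepsilon>0$ and $t_0>0$, suppose that $|2\phi_t^\lambda-\alpha_t|\ge\varepsilon$ for \emph{every} $t\in[0,t_0]$, and show that this is impossible once $\lambda$ exceeds an explicit threshold $\Lambda_{\varepsilon,t_0}$. First I would make two harmless reductions. If $\varepsilon>\kappa$ there is nothing to prove, since $|2\phi_0^\lambda-\alpha_0|\le\kappa<\varepsilon$ already gives \eqref{eq:SmallAng} with $s=0$; so assume $0<\varepsilon\le\kappa$. Fix any $\kappa'$ with $\kappa<\kappa'<\pi$ (say $\kappa'=(\kappa+\pi)/2$) and set $m\triangleq\min\{\sin u:u\in[\varepsilon,\kappa']\}>0$. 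Let $\omega$ be the modulus of continuity of $\alpha$ on $[0,L]$, put $W\triangleq\omega(t_0)$, choose $h\in(0,t_0]$ with $\omega(h)<\kappa'-\kappa$ (possible since $\omega(0+)=0$), and define $\Lambda_{\varepsilon,t_0}\triangleq(\kappa+W)/(2mh)$.

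Next, fix $\lambda>\Lambda_{\varepsilon,t_0}$ and assume for contradiction that $\delta_t\triangleq 2\phi_t^\lambda-\alpha_t$ satisfies $|\delta_t|\ge\varepsilon$ throughout $[0,t_0]$. Since $\delta$ is continuous and avoids $(-\varepsilon,\varepsilon)$, the intermediate value theorem forces $\delta$ to keep a constant sign on $[0,t_0]$; replacing $(\phi,\alpha)$ by $(-\phi,-\alpha)$ if necessary (which preserves the angular equation \eqref{eq:AEqn} and the hypothesis), I may assume $\delta_t\ge\varepsilon$ for all $t\in[0,t_0]$. The key step --- mirroring the first-exit argument in the proof of Lemma \ref{lem:SamInt} --- is to show that $\delta_t<\kappa'$ on all of $[0,t_0]$, so that $\sin\delta_t$ never changes sign. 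Let $\tau\triangleq t_0\wedge\inf\{t\in[0,t_0]:\delta_t=\kappa'\}$. On $[0,\tau]$ one has $\delta_s\in[\varepsilon,\kappa']$, hence $\sin\delta_s\ge m$; writing \eqref{eq:AEqn} as $2\phi_t^\lambda=2\phi_0^\lambda-2\lambda\int_0^t\sin\delta_s\,ds$, we obtain for $t\in[0,\tau]$
\[
\delta_t=(2\phi_0^\lambda-\alpha_0)+(\alpha_0-\alpha_t)-2\lambda\!\int_0^t\!\sin\delta_s\,ds\le\kappa+\omega(t)-2\lambda m t.
\]
For $t\le h$ the right-hand side is at most $\kappa+\omega(h)<\kappa'$, while for $h\le t\le\tau$ it is at most $\kappa+W-2\lambda m h<0<\kappa'$ by the choice of $\Lambda_{\varepsilon,t_0}$. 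Thus $\delta_t<\kappa'$ on $[0,\tau]$, which rules out $\tau<t_0$ and shows $\delta_t\in[\varepsilon,\kappa']$ on all of $[0,t_0]$.

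Finally, applying the displayed inequality at $t=t_0$ (now valid on the whole interval) and using $t_0\ge h$ together with $\lambda>\Lambda_{\varepsilon,t_0}$ gives $\delta_{t_0}\le\kappa+W-2\lambda m t_0\le\kappa+W-2\lambda m h<0<\varepsilon$, contradicting $\delta_{t_0}\ge\varepsilon$. Hence for every $\lambda>\Lambda_{\varepsilon,t_0}$ there is some $s\in[0,t_0]$ with $|2\phi_s^\lambda-\alpha_s|<\varepsilon$, which is the assertion. I expect the only genuinely delicate point to be the barrier estimate keeping $\delta_t$ below $\kappa'<\pi$: here the continuity of $\alpha$ (through the modulus $\omega$, split into the two regimes $t\le h$ and $t\ge h$) is exactly what is needed to guarantee that the strong mean-reversion produced by $\sin\delta_s\ge m$ is not overtaken by the drift of $\alpha$ before $\delta$ has been pulled down below $\varepsilon$.
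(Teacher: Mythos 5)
Your proof is correct and follows the same barrier-plus-dissipation scheme as the paper's: keep $\delta_t = 2\phi_t^{\lambda} - \alpha_t$ below a fixed threshold $\kappa' \in (\kappa,\pi)$ by exploiting continuity of $\alpha$ near the origin, then use the uniform positivity of $\sin\delta_t$ on $[\varepsilon,\kappa']$ to drive $\delta_t$ below $\varepsilon$ before time $t_0$. Two of your minor refinements are actually improvements in rigour: replacing the paper's $\sin\varepsilon$ by $m=\min_{u\in[\varepsilon,\kappa']}\sin u$ removes an implicit restriction (the paper's bound $\int_0^{s_1}\sin(2\phi_s^{\lambda}-\alpha_s)\,ds\geqslant s_1\sin\varepsilon$ tacitly needs $\kappa'\leqslant\pi-\varepsilon$), and the symmetry $(\phi,\alpha)\mapsto(-\phi,-\alpha)$, which you correctly verify preserves \eqref{eq:AEqn}, tidily collapses the two sign cases that the paper treats separately.
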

\begin{proof}
Let $\kappa'\in(\kappa,\pi)$ be fixed. By the continuity of $\alpha_{t}$
at the origin, there exists $\delta\in(0,t_{0})$ such that 
\[
t\in[0,\delta]\implies|\alpha_{t}-\alpha_{0}|<\kappa'-\kappa.
\]
Given $\varepsilon>0,$ we define 
\[
\Lambda\triangleq\frac{2\|\alpha\|_{\infty}+\kappa}{2\delta\sin\varepsilon}.
\]
For each given $\lambda>\Lambda,$ we claim that (\ref{eq:SmallAng})
holds. Suppose on the contrary that 
\[
|2\phi_{s}^{\lambda}-\alpha_{s}|\geqslant\varepsilon\ \ \ \forall s\in[0,t_{0}].
\]
By continuity, we have either 
\[
(\text{i})\ 2\phi_{s}^{\lambda}-\alpha_{s}\geqslant\varepsilon\ \ \ \forall s\in[0,t_{0}]
\]
or
\[
(\text{ii})\ 2\phi_{s}^{\lambda}-\alpha_{s}\leqslant-\varepsilon\ \ \ \forall s\in[0,t_{0}].
\]
Suppose that Case (i) holds. Define 
\[
s_{1}\triangleq\inf\{s\in[0,t_{0}]:2\phi_{s}^{\lambda}-\alpha_{s}=\kappa'\}.
\]
Then we must have $s_{1}>\delta.$ Indeed, consider the equation 
\begin{equation}
2\phi_{s_{1}}^{\lambda}-\alpha_{s_{1}}=2\phi_{0}^{\lambda}-\alpha_{0}-\big(\alpha_{s_{1}}-\alpha_{0}\big)-2\lambda\int_{0}^{s_{1}}\sin(2\phi_{s}^{\lambda}-\alpha_{s})ds.\label{eq:Ints_1}
\end{equation}
Note that 
\[
\varepsilon\leqslant2\psi_{s}^{\lambda}-\alpha_{s}\leqslant\kappa'<\pi\ \ \ \forall s\in[0,s_{1}]
\]
and thus $\sin(2\phi_{s}^{\lambda}-\alpha_{s})$ is positive on $[0,s_{1}].$
If $s_{1}\leqslant\delta,$ the left hand side of (\ref{eq:Ints_1})
equals $\kappa'$ while the right hand side is strictly less than
\[
\kappa+(\kappa'-\kappa)-2\lambda\int_{0}^{s_{1}}\sin(2\phi_{u}^{\lambda}-\alpha_{u})du\leqslant\kappa'.
\]
This is clearly a contradiction. Therefore, $s_{1}>\delta.$ Now using
the same equation (\ref{eq:Ints_1}), we see that the left hand side
is bounded below by $\varepsilon$ while the right hand side is bounded
above by 
\[
\kappa+2\|\alpha\|_{\infty}-2\lambda s_{1}\sin\varepsilon\leqslant\kappa+2\|\alpha\|_{\infty}-2\lambda\delta\sin\varepsilon.
\]
This leads to a contradiction, since the quantity on the right hand
side of the above inequality is negative when $\lambda>\Lambda$ by
the definition of $\Lambda.$ The discussion of Case (ii) is similar.
\end{proof}
Now we are able to complete the proof of Lemma \ref{lem:C1Lem}.

\begin{proof}[Proof of Lemma \ref{lem:C1Lem}]

Given $\varepsilon>0,$ define
\[
\Lambda_{1}\triangleq\frac{\|\alpha\|_{\infty}}{h\sin\varepsilon},\ \Lambda_{2}\triangleq\frac{2\|\alpha\|_{\infty}+\kappa}{2\delta\sin\varepsilon},
\]
which are the two constants appearing in the proofs of Lemma \ref{lem:SamInt}
and Lemma \ref{lem:SmallAng} respectively. Define $\Lambda\triangleq\max\{\Lambda_{1},\Lambda_{2}\}$.
For each $\lambda>\Lambda,$ we know from Lemma \ref{lem:SmallAng}
that there is $s\in[0,t_{0}]$ (which may depend on $\lambda$) such
that (\ref{eq:SmallAng}) holds. In addition, exactly the same argument
as in the proof of Lemma \ref{lem:SamInt} allows us to conclude that
\[
|2\phi_{t}^{\lambda}-\alpha_{t}|\leqslant2\varepsilon\ \ \ \forall t\in[s,L]
\]
In particular, we have
\[
\sup_{t\in[t_{0},L]}|2\phi_{t}^{\lambda}-\alpha_{t}|\leqslant2\varepsilon.
\]
This gives the desired uniform convergence.

\end{proof}
\begin{rem}
It is not hard to see why $2\phi_{0}^{\lambda}=\alpha_{0}\pm\pi$
are ``bad'' initial condition. Consider the extreme example where
$\alpha_{t}\equiv\alpha_{0}.$ If $2\phi_{0}^{\lambda}=\alpha_{0}\pm\pi$,
then we have $\phi_{t}^{\lambda}\equiv\phi_{0}^{\lambda}$, which
is never close to $\frac{\alpha_{t}}{2}$. If we perform an explicit
calculation for the tree-like path $v\sqcup(-v)$ ($v\in\mathbb{R}^{2}$),
this is exactly what happens in the $(-v)$-part.
\end{rem}
\begin{cor}
Let $\gamma:[0,L]\rightarrow\mathbb{R}^{2}$ be a path defined by
the equation (\ref{eq:DefPath}), where the angular path $\beta:[0,L]\rightarrow\mathbb{R}^{2}$
is a continuous function. Then the signature asymptotics formula (\ref{eq:LengConj})
holds for $\gamma.$
\end{cor}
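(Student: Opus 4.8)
The plan is to combine the intermediate lower bound of Lemma~\ref{lem:IntLow} with the uniform convergence statement of Lemma~\ref{lem:C1Lem}; the latter already carries out all of the genuine analytic work. First I would observe that if $\beta:[0,L]\to\mathbb{R}$ is continuous, then the time-reversed angular path $\alpha_t\triangleq\beta_{L-t}$ is continuous on $[0,L]$, so we are exactly in the setting of Lemma~\ref{lem:C1Lem}.

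Next, recall from Section~\ref{subsec:SLDev} and Lemma~\ref{lem:ODEw} that the initial angle $\phi_0^\lambda$ appearing in the angle dynamics \eqref{eq:AEqn} is a free parameter: it is the argument of the unit vector $\xi^\lambda=w_0^\lambda$, and since $\Gamma_L^\lambda$ is invertible any value is admissible (this is the same device used in Section~\ref{subsec:EndPtIss}). I would therefore set $\phi_0^\lambda\triangleq\alpha_0/2$ for every $\lambda>0$, so that $2\phi_0^\lambda=\alpha_0$. By the second assertion of Lemma~\ref{lem:C1Lem} --- the case $2\phi_0^\lambda=\alpha_0$ --- the function $2\phi_t^\lambda$ then converges to $\alpha_t$ \emph{uniformly on the whole interval} $[0,L]$ as $\lambda\to\infty$.

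Consequently $\cos(\alpha_t-2\phi_t^\lambda)\to1$ uniformly on $[0,L]$, hence $\int_0^L\cos(\alpha_t-2\phi_t^\lambda)\,dt\to L$, and Lemma~\ref{lem:IntLow} gives $L_1(\gamma)\geqslant L$. On the other hand, the path $\gamma$ defined by \eqref{eq:DefPath} is parametrised at unit speed, so $\|\gamma\|_{1\text{-var}}=L$, and the elementary triangle-inequality bound $L_1(\gamma)\leqslant\|\gamma\|_{1\text{-var}}$ recalled after \eqref{eq:LimFunc} yields $L_1(\gamma)\leqslant L$. Combining the two bounds gives $L_1(\gamma)=L$, which is exactly the signature asymptotics formula \eqref{eq:LengConj} for $\gamma$.

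There is essentially no obstacle at this stage, since all of the difficulty has already been absorbed into Lemma~\ref{lem:C1Lem}, whose proof (through Lemmas~\ref{lem:SamInt} and~\ref{lem:SmallAng}) exploits the modulus of continuity of $\alpha$ together with the strong mean-reverting structure of \eqref{eq:AEqn} for large $\lambda$. The only minor point to be careful about is the legitimacy of prescribing $\phi_0^\lambda$, and in particular of choosing $2\phi_0^\lambda=\alpha_0$ so as to invoke the version of Lemma~\ref{lem:C1Lem} giving uniform convergence on all of $[0,L]$ rather than merely on $[t_0,L]$; with the weaker version one would only obtain $\int_{t_0}^L\cos(\alpha_t-2\phi_t^\lambda)\,dt\to L-t_0$ and would then let $t_0\to0^+$, which works equally well.
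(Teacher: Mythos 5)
Your proof is correct and follows essentially the same route as the paper: set $2\phi_0^\lambda=\alpha_0$, invoke the strong (whole-interval) case of Lemma~\ref{lem:C1Lem} to get uniform convergence $2\phi_t^\lambda\to\alpha_t$, and feed this into the lower estimate of Lemma~\ref{lem:IntLow}. The only difference is that you spell out explicitly the matching upper bound $L_1(\gamma)\leqslant L$ and the alternative via $t_0\to 0^+$, which the paper leaves implicit.
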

\begin{proof}
With $\alpha_{t}\triangleq\beta_{L-t}$ and $2\phi_{0}^{\lambda}\triangleq\alpha_{0},$
Lemma \ref{lem:C1Lem} shows that the angular path $2\phi_{t}^{\lambda}$
converges uniformly to $\alpha_{t}$ as $\lambda\rightarrow\infty.$
The result then follows from the lower estimate (\ref{eq:IntegralLow}).
\end{proof}
\end{appendices}

\begin{appendices}
\section{Proof that every path is reparametrisable to have unit speed}

Let $|\cdot|$ denote the Euclidean norm. Define 
\[
\Vert f\Vert_{1\text{-var},[s,t]}=\sup_{\mathcal{P}}\sum_{i=0}^{n-1}|f_{t_{i+1}}-f_{t_{i}}|,
\]
where the supremum is taken over all partitions $s=t_{0}<t_{1}<\ldots<t_{n}=t.$ 

We will use the following three facts about $1$-variation.

(i) For all $s\leqslant t$, we have
\[
\Vert f\Vert_{1\text{-var},[0,t]}=\Vert f\Vert_{1\text{-var},[0,s]}+\Vert f\Vert_{1\text{-var},[s,t]}.
\]

(ii) For all non-decreasing function $L:[0,1]\rightarrow[0,L(1)]$
such that $g\circ L(t)=f(t)$ with some function $g$, we have
\[
\Vert g\Vert_{1\text{-var},[0,L(s)]}=\Vert f\Vert_{1\text{-var},[0,s]}.
\]

(iii) For all $s\leqslant t$, we have
\[
|f(t)-f(s)|\leqslant\Vert f\Vert_{1\text{-var},[s,t]}.
\]

Fact (i) appeared in \cite{KF75} while Facts (ii) and (iii) appeared
without proofs in \cite[Lemma 1.6]{LCL07}.
\begin{lem}
Let $\gamma:[0,1]\rightarrow\mathbb{R}^{d}$ be a continuous path
with finite variation. Define $L(t)\triangleq\Vert\gamma\Vert_{1\text{-var},[0,t]}$.
There exists a function $\tilde{\gamma}:[0,L(1)]\rightarrow\mathbb{R}^{d}$
such that $\tilde{\gamma}_{L(t)}=\gamma_{t}$ for all $t\in[0,1]$. 
\end{lem}
\begin{proof}
If $L(t)=L(s)$, then by Fact (i) $\Vert\gamma\Vert_{1\text{-var},[s,t]}=0$
and by Fact (iii) $\gamma_{t}=\gamma_{s}$. Since $L(t)=L(s)\implies\gamma_{t}=\gamma_{s}$,
the lemma thus follows.
\end{proof}
\begin{lem}
For all $0\leqslant u\leqslant v\leqslant L(1)$, we have
\[
|\tilde{\gamma}_{v}-\tilde{\gamma}_{u}|\leqslant|v-u|.
\]
\end{lem}
\begin{proof}
Let $s\leqslant t$ be such that $L(s)=u$ and $L(t)=v$. Note that
\begin{align*}
|\tilde{\gamma}_{v}-\tilde{\gamma}_{u}| & \leqslant\Vert\tilde{\gamma}\Vert_{1\text{-var},[u,v]}\qquad(\text{by Fact }\text{(iii)})\\
 & =\Vert\tilde{\gamma}\Vert_{1\text{-var},[0,v]}-\Vert\tilde{\gamma}\Vert_{1\text{-var},[0,u]}\quad(\text{by Fact }\text{(i)})\\
 & =\Vert\tilde{\gamma}\Vert_{1\text{-var},[0,L(t)]}-\Vert\tilde{\gamma}\Vert_{1\text{-var},[0,L(s)]}\\
 & =\Vert\gamma\Vert_{1\text{-var},[0,t]}-\Vert\gamma\Vert_{1\text{-var},[0,s]}\quad(\text{by Fact \text{(ii)})}\\
 & =v-u.
\end{align*}
The result thus follows.
\end{proof}
\begin{lem}
\label{lem:Representation}There exists $g:[0,L]\rightarrow\mathbb{R}^{d}$
such that $g\in L^{1}$, 
\[
\tilde{\gamma}_{v}-\tilde{\gamma}_{0}=\int_{0}^{v}g(\alpha)\mathrm{d}\alpha
\]
and $|g(\alpha)|=1$ for almost all $\alpha$.
\end{lem}
\begin{proof}
Since $\tilde{\gamma}$ is Lipschitz, there exists a $L^{1}$-function
$g:[0,L]\rightarrow\mathbb{R}^{d}$ (see e.g. \cite{KF75}) such that
for all $v\in[0,L(1)]$, we have
\[
\tilde{\gamma}_{v}-\tilde{\gamma}_{0}=\int_{0}^{v}g(\alpha)\mathrm{d}\alpha.
\]
By Lebesgue's differentiation theorem (again see e.g. \cite{KF75}),
for almost all $\alpha$ we have
\begin{align*}
|g(\alpha)| & =\lim_{\varepsilon\rightarrow0}\big|\frac{\tilde{\gamma}_{\alpha+\varepsilon}-\tilde{\gamma}_{\alpha}}{\varepsilon}\big|\leqslant1\qquad(\text{by Lemma \ref{lem:Representation}}).
\end{align*}
Now assume on the contrary that 
\[
\mu(\{\alpha:|g(\alpha)|<1\})>0.
\]
Then there exists $\varepsilon>0$ such that 
\[
\mu(\{\alpha:|g(\alpha)|<1-\varepsilon\})>0.
\]
It follows that
\begin{align*}
L(1)= & \Vert\gamma\Vert_{1\text{-var},[0,1]}=\Vert\tilde{\gamma}\Vert_{1\text{-var},[0,L(1)]}\quad(\text{by Fact }\text{(ii)})\\
= & \sup_{(t_{i})}\sum_{i}|\tilde{\gamma}_{t_{i+1}}-\tilde{\gamma}_{t_{i}}|,\qquad(\text{with }(t_{i})\text{ being a partition of }[0,L(1)])\\
\leqslant & \sup_{(t_{i})}\sum_{i}\int_{t_{i}}^{t_{i+1}}|g(\alpha)|\mathrm{d}\alpha=\int_{0}^{L(1)}|g(\alpha)|\mathrm{d}\alpha\\
\leqslant & (1-\varepsilon)\mu(\{\alpha:|g(\alpha)|<1-\varepsilon\})+\mu(\{\alpha:|g(\alpha)|\geqslant1-\varepsilon\})\\
= & L(1)-\varepsilon\mu(\{\alpha:|g(\alpha)|<1-\varepsilon\}),
\end{align*}
which leads to a contradiction.
\end{proof}
\begin{thm}
Let $\gamma:[0,1]\rightarrow\mathbb{R}^{2}$ be a continuous path
with finite $1$-variation. Denote 
\[
L=\Vert\gamma\Vert_{1\text{-var},[0,1]}.
\]
Then there exists a non-decreasing function $q:[0,1]\rightarrow[0,L]$
and a measurable function $\beta:[0,L]\rightarrow\mathbb{R}$ such
that 
\[
\gamma_{t}-\gamma_{0}=\int_{0}^{t}(\cos\beta_{u},\sin\beta_{u})\mathrm{d}u.
\]
\end{thm}
\begin{proof}
This is a direct consequence of Lemma \ref{lem:Representation} because
for any $(x,y)\in\mathbb{R}^{2}$ such that $x^{2}+y^{2}=1$, there
exists $\beta\in\mathbb{R}$ such that 
\[
(\cos\beta,\sin\beta)=(x,y).
\]
If we interpret ``$\arctan$'' as a function $\arctan:\mathbb{R}\rightarrow(-\frac{\pi}{2},\frac{\pi}{2})$,
then $\beta$ can be chosen in the following manner
\[
\beta=\begin{cases}
\arctan(\frac{y}{x}), & x>0;\\
\arctan(\frac{y}{x})-\pi, & x<0,y<0;\\
\arctan(\frac{y}{x})+\pi, & x<0,y\geqslant0;\\
\frac{\pi}{2}, & x=0,y>0;\\
-\frac{\pi}{2}, & x=0,y<0.
\end{cases}
\]
In particular, $\beta$ is a measurable function of $(x,y)$. As a
consequence, $\beta_{\cdot}$ as a measurable function of $g(\cdot)$
is measurable. 
\end{proof}
\end{appendices}

\section*{Acknowledgment}

We thank the referees for their careful review of this paper.


\begin{thebibliography}{10}
\bibitem{BG19}H. Boedihardjo and X. Geng, A non-vanishing property
for the signature of a path, \textit{C. R. Math. Acad. Sci. Paris}
357 (2): 120-129, 2019.

\bibitem{BGLY16}H. Boedihardjo, X. Geng, T. Lyons and D. Yang, The
signature of a rough path: uniqueness, \textit{Adv. Math.} 293 (2016):
720-737.

\bibitem{BGS20}H. Boedihardjo, X. Geng and N.P. Souris, Path developments
and tail asymptotics of signature for pure rough paths, \textit{Adv.
Math.} 364 (2020).

\bibitem{Chen73}K.T. Chen, Iterated integrals of differential forms
and loop space homology, Ann. of Math. 97 (2) (1973): 217-246.

\bibitem{CL19}J. Chang and T. Lyons, Insertion algorithm for inverting
the signature of a path,\textit{ arXiv:1907.08423}, 2019.

\bibitem{CLN18}J. Chang, T. Lyons and H. Ni, Super-multiplicativity
and a lower bound for the decay of the signature of a path of finite
length, \textit{C. R. Acad. Sci. Paris 356} (1) (2018): 720--724.

\bibitem{MonotoneInversion}J. Chang, N. Duffield, H. Ni, and W. Xu,
Signature inversion for monotone paths, \textit{Electronic Communications
in Probability} 22 (2017).

\bibitem{CL16}I. Chevyrev and T. Lyons, Characteristic functions
of measures on geometric rough paths, \textit{Ann. Probab.} 44 (6)
(2016): 4049--4082.

\bibitem{Dyson49}F.J. Dyson, The radiation theories of Tomonaga,
Schwinger and Feynman, Phys. Rev. 75 (1949) 486--502.

\bibitem{Folland99} G.B. Folland, Real analysis: modern techniques
and their applications, 2nd edition, John Wiley \& Sons, 1999.

\bibitem{Geng17}X. Geng, Reconstruction for the signature of a rough
path, \textit{Proc. Lond. Math. Soc.} 114 (3): 495-526, 2017.

\bibitem{HL10}B. Hambly and T. Lyons, Uniqueness for the signature
of a path of bounded variation and the reduced path group, Ann. of
Math. 171 (1) (2010): 109--167.

\bibitem{KF75}A. Kolmogorov and S. Fomin. \textit{Introductory real
analysis}, Courier Corporation, 1975.

\bibitem{LCL07}T. Lyons, M. Caruana, and T. L\'evy. \textit{Differential
equations driven by rough paths}, Volume 1908 of \textit{Lecture Note
in Mathematics}, Springer, Berlin, 2007.

\bibitem{LQ02}T. Lyons and Z. Qian, \textit{System control and rough
paths}, Clarendon Press, Oxford, 2002.

\bibitem{LS06}T. Lyons and N. Sidorova, On the radius of convergence
of the logarithmic signature, \textit{Illinois J. Math.} 50 (4) (2006):
763--790.

\bibitem{LX15}T. Lyons and W. Xu, Hyperbolic development and the
inversion of signature, \textit{J. Funct. Anal.} 272 (7) (2015): 2933--2955.

\bibitem{LX18}T. Lyons and W. Xu, Inverting the signature of a path,
\textit{J. Eur. Math. Soc.} 20 (7) (2018): 1655--1687.

\bibitem{Reu93}C. Reutenauer, \textit{Free Lie algebras}, Clarendon
Press, Oxford, 1993.

\bibitem{Ryan02}R.A. Ryan, \textit{Introduction to tensor products
of Banach spaces}, Springer, 2002.
\end{thebibliography}
\end{document}